\newtheorem{thrm}{Theorem}[section]
\newtheorem{lemma}[thrm]{Lemma}
\newtheorem{prop}[thrm]{Proposition}
\newtheorem{cor}[thrm]{Corollary}
\newtheorem{rmrk}[thrm]{Remark}
\newtheorem{conv}[thrm]{Convention}
\newcommand{\newsection}{    
\setcounter{equation}{0}\section}
\def\appendix#1{\addtocounter{section}{1}\setcounter{equation}{0}
\renewcommand{\thesection}{\Alph{section}}
\section*{Appendix \thesection\protect\indent \parbox[t]{11.15cm}{#1}}
\addcontentsline{toc}{section}{Appendix \thesection\ \ \ #1}}
\newcommand{\be}{\begin{eqnarray}}
\newcommand{\ee}{\end{eqnarray}}
\newcommand{\bea}{\begin{eqnarray}}
\newcommand{\eea}{\end{eqnarray}}
\newcommand{\ba}{\begin{array}}
\newcommand{\ea}{\end{array}}
\def\d{\delta}
\font\mybb=msbm10 at 11pt
\def\bb#1{\hbox{\mybb#1}}
\def\bC {\bb{C}}
\def\sb {{\nabla}}
\def\LC{{\nabla^g}}
\def\ps{{\Psi^+}}
\def\sp{{\Psi^-}}
\def\ph{{\Phi}}
\def\ps{{\Psi^+}}
\def\sp{{\Psi^-}}
\begin{document}
\begin{center}
\vspace*{-1.0cm}


{\Large \bf Almost Calabi-Yau with torsion 6-manifolds \\[5pt] and the instanton condition
} 

\vspace{0.5  cm}
 {\large Stefan  Ivanov${}^1$ and  Luis Ugarte$^2$}

\vspace{0.5cm}

${}^1$ University of Sofia, Faculty of Mathematics and
Informatics,\\ blvd. James Bourchier 5, 1164, Sofia, Bulgaria
\\and  Institute of Mathematics and Informatics,
Bulgarian Academy of Sciences\\
email: ivanovsp@fmi.uni-sofia.bg

\vspace{0.5cm}
${}^2$ Departamento de Matem\'aticas\,-\,I.U.M.A.\\
Universidad de Zaragoza\\
Campus Plaza San Francisco\\
50009 Zaragoza, Spain
email: ugarte@unizar.es


\end{center}

\vskip 0.5 cm
\begin{abstract}
\noindent 
It is observed that on a compact  almost complex Calabi-Yau with torsion (ACYT) 6-manifold with co-closed Lee form the curvature of the torsion connection is an $SU(3)$-instanton if and only if the torsion is parallel with respect to the torsion connection. The same conclusion holds for any 
(non necessarily compact)
balanced ACYT 6-manifold. In particular, on a CYT 6-manifold the Strominger-Bismut connection is 
an $SU(3)$-instanton if and only if the torsion is parallel with respect to the Strominger-Bismut connection provided either the CYT 6-manifold  is compact with co-closed Lee form or it is a balanced CYT 6-manifold.

\medskip

Keywords: torsion connection, $SU(3)$ holonomy, almost Calabi-Yau with torsion $SU(3)$-instanton

\medskip

AMS MSC2020: 53C55, 53C21, 53C29, 53Z05
\end{abstract}

\vskip 0.5cm
\noindent{\bf Acknowledgements:} \vskip 0.1cm
\noindent We thank Jeff Streets for pointing out to us the connection of the $SU(3)$-Hull instanton on a CYT space with the Chern instanton of a generalized Hermitian metric on an associated vector bundle. 
The research of S.I.  is partially supported by Contract KP-06-H72-1/05.12.2023 with the National Science Fund of Bulgaria,  by Contract 80-10-61 / 27.5.2025  with the Sofia University "St.Kl.Ohridski".  
L.U. is partially supported by grant PID2023-148446NB-I00, funded by MICIU/AEI/10.13039/501100011033. 

\vskip 0.5cm



\tableofcontents

\setcounter{section}{0}
\setcounter{subsection}{0}



\newsection{Introduction}
Riemannian manifolds with metric connections having totally skew-symmetric torsion and special holonomy received a lot of interest in mathematics and theoretical physics mainly from supersymmetric string theories and supergravity.  The main reason comes from the Hull-Strominger system which describes the supersymmetric background in heterotic string theories \cite{Str,Hull}. 

The number of preserved supersymmetries depends on the number of parallel spinors with respect to a metric connection $\sb$ with totally skew-symmetric torsion $T$. 
The torsion 3-form $T$ is identified with the 3-form field strength in these theories. 
The presence of a $\nabla$-parallel spinor leads to restriction of the
holonomy group $Hol(\nabla)$ of the torsion connection $\nabla$.
Namely, $Hol(\nabla)$ has to be contained in $SU(n)$, $dim=2n$
\cite{Str,GMW,IP1,IP2,Car,BB,BBE}.

Complex non-K\"ahler geometries appear
in string compactifications  and are studied intensively for a long time
\cite{Str,GMW,GKMW,GMPW,GPap,BB,BBE}. 
Hermitian manifolds have widespread applications in both physics and differential geometry in connection with solutions to the Hull-Strominger system see \cite{LY,yau,yau1,FIUV,XS,PPZ,CPYau,CPY1,GRST,PPZ4,Ph} and references therein.
On a Hermitian manifold,  there exists   a  unique
connection which preserves  the Hermitian structure and has
totally skew-symmetric torsion tensor. Its existence and explicit expression first appeared in Strominger's seminal paper \cite{Str} in 1986 in connection with the heterotic supersymmetric string background, where he called it the H-connection. Three years later, Bismut formally discussed
and used this connection in his local index theorem paper \cite{bismut}, which leads to the name Bismut
connection in literature.
We call this connection the Strominger-Bismut connection. Note that the connection also appeared implicitly earlier (\cite{Yano}) and in some literature it was also called the KT connection (K\"ahler with torsion) or characteristic connection.  When the holonomy of the Strominger-Bismut connection is contained in $SU(n)$ one has the notion of Calabi-Yau manifold with torsion (CYT) spaces and these manifolds are of great interest in string theories and in mathematics.

 Some types of non-complex 6-manifolds have also been invented  in the
string theory due to the mirror symmetry and T-duality
\cite{GLMW,GM,Car,Car1,KLMS,KML,HLM}. 
Almost Hermitian manifolds with totally skew-symmetric Nijenhuis
tensor arise as target spaces of a class of (2,0)-supersymmetric
two-dimensional sigma models \cite{Pap}. For the consistency of
the theory, the Nijenhuis tensor has to be parallel with respect
to the torsion connection with holonomy contained in $SU(n)$. The known models are group manifolds as well as the nearly K\"ahler spaces. 
In general, an almost Hermitian manifold does not admit a metric connection preserving the almost Hermitian structure and having  totally skew-symmetric torsion. It is shown in \cite[Theorem~10.1]{FI} that this is equivalent to the condition that the Nijenhuis tensor of type (0,3) 
 is totally skew-symmetric, i.e. this is  the class $G_1$ in Gray-Hervella classification \cite{GrH}. In this case,  the connection is unique. This class contains the Hermitian manifolds with Strominger-Bismut connection as well as the nearly K\"ahler spaces. In the nearly K\"ahler case the torsion connection $\sb$ is the characteristic connection considered by Gray \cite{gray}, and the torsion $T$ and the Nijenhuis tensor are $\sb$-parallel (see e.g. \cite{Kir,BM}). If the holonomy of the torsion connection is contained in $SU(n)$, that is,  the Ricci 2-form $\rho$ of $\sb$  representing the first Chern class is identically zero,  then we have the notion of \emph{Almost Calabi-Yau manifold with torsion (briefly ACYT)}.

 As it was pointed out in \cite{Str}, the existence of a parallel spinor with respect to a metric connection $\sb$ with torsion 3-form, in dimension six,  leads to the restriction that its holonomy group has to be contained in $SU(3)$. This means one has to consider an $SU(3)$-structure, i.e. an almost Hermitian manifold $(M^6,g,J)$ with topologically trivial canonical bundle trivialized by a (3,0) with respect to $J$ form  $\Psi=\Psi^++\sqrt{-1}\,\Psi^-$\ endowed with a metric connection preserving the $SU(3)$ structure with totally skew-symmetric torsion. 
 
From the point of view of physics, the Hull-Strominger system is consisted of the Killing spinor equations and the anomaly cancellation condition.  The Killing spinor equations lead to consider compact  CYT space with an exact Lee form, i.e. the trace of the torsion should be an exact 1-form \cite{Str}. The anomaly cancellation condition expresses the exterior derivative of the torsion 3-form in terms of the difference of the first Pontrjagin 4-form of an instanton connection on a vector bundle and a connection on the tangent bundle. It is shown in \cite{I1} that solutions to the Hull-Strominger system solve the heterotic equations of motion in dimensions 5,6,7 and 8 if and only if the connection on the tangent bundle in the anomally cancellation condition is an instanton (see \cite{MS} and \cite{XS} for extension of this result to all dimensions). The instanton condition (Yang-Mills connection) is highly non-linear PDE and the physically relevant connection is the Hull connection which is defined as the metric connection with torsion $-T$. 
It is known that if there exists an SU(3)-instanton on a holomorphic vector bundle on a complex 6-manifold then  it is unique because of the non-K\"ahler version of the Donaldson-Uhlenbeck-Yau theorem  established by Li-Yau in \cite{LiY}.

The  main purpose of the paper is to investigate the $SU(3)$--instanton condition  of the Strominger-Bismut and $SU(3)$-Hull connections  on an ACYT 6-manifold.

\begin{thrm}\label{main} Let $(M,g,J,\Psi)$ be an ACYT 6-manifold  with  constant norm of the Nijenhuis tensor and $\sb$--parallel Lee form. 

Then the torsion connection $\sb$ is an $SU(3)$--instanton if and only if  the torsion is $\sb$-parallel, $\sb T=0$.
\end{thrm}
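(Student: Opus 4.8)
The plan is to compute both sides of the instanton identity in terms of the intrinsic torsion of the $SU(3)$-structure and then compare. On a 6-manifold an $SU(3)$-structure $(g,J,\Psi)$ with skew-symmetric Nijenhuis tensor carries a unique metric connection $\sb$ with totally skew-symmetric torsion $T$, and the instanton condition for a connection $A$ on a bundle with curvature $R^A$ reads $R^A \lrcorner \Psi^+ = 0$ together with $R^A(\cdot,\cdot)\lrcorner \omega = 0$ where $\omega$ is the fundamental form; equivalently, $R^A$ has no component in the $\mathfrak{su}(3)^\perp$ part of $\Lambda^2$. Applied to $A=\sb$ this says the curvature $R$ of the torsion connection, viewed as a 2-form valued in $\mathfrak{so}(6)$, actually takes values in $\mathfrak{su}(3)$ *and* is a primitive $(1,1)$-form in its form arguments. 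First I would recall the ACYT hypothesis: the Ricci form $\rho$ of $\sb$ vanishes, so $R$ already has $\mathfrak{su}(3)$-values in the Lie-algebra slot; the content of the instanton condition is therefore entirely in the form-type slot, namely $R \in \Lambda^{(1,1)}_0 \otimes \mathfrak{su}(3)$.

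The key computational input is the first Bianchi identity for a connection with skew torsion,
\[
\mathfrak{S}_{X,Y,Z} R(X,Y,Z,W) = \mathfrak{S}_{X,Y,Z}\bigl( (\sb_X T)(Y,Z,W) + T(T(X,Y),Z,W) \bigr),
\]
together with the general identity $d T = 2\sigma^T + (\text{terms in }\sb T)$ relating $dT$, the torsion connection, and the quantity $\sigma^T(X,Y,Z,W)=\frac12\sum_i (e_i\lrcorner T)\wedge(e_i\lrcorner T)$. The strategy is: assume $\sb$ is an instanton, pair the Bianchi identity with $\Psi^+$ and with $\omega$, and use the vanishing of the $\mathfrak{su}(3)^\perp$- and the non-primitive-$(1,1)$-components of $R$ to kill the left-hand side; what survives is an expression involving $\sb T$, the algebraic square $T\ast T$, and the intrinsic torsion forms. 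Here the hypotheses enter decisively: $\sb$-parallel Lee form removes the $\Lambda^1$-torsion-class contributions to these contractions, and constant norm of the Nijenhuis tensor controls the remaining $\Lambda^3$-class ($W_1,W_2$ in the $SU(3)$ torsion-class language), because $|N|^2$ is, up to constants, $\sum_i |\sb_i T|^2$-type data combined with $\|T\|^2$; differentiating $\|T\|^2$ (constant, since it is built from $|N|^2$ and $|d\omega|$-type invariants that are themselves controlled) gives $\langle \sb T, T\rangle = 0$ pointwise in the relevant channel. Combining these, the instanton condition forces a pointwise algebraic identity that can only hold if $\sb T = 0$; for the converse, $\sb T = 0$ immediately makes the Bianchi right-hand side purely algebraic in $T$, and one checks directly (using the ACYT condition and the $SU(3)$-structure equations) that $T\ast T$ then lies in the instanton subspace, so $R$ does too.

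For the compact-with-co-closed-Lee-form case I would instead integrate: form the Weitzenböck-type identity for $\sb$ applied to $T$ (or equivalently integrate $\langle R, \text{instanton-projection}\rangle$ against $T\wedge \ast$ something), use Stokes to discard the exact term — this is exactly where $\delta\theta=0$ is needed, as it kills the boundary/divergence contribution of the Lee form — and conclude $\int_M |\sb T|^2 = 0$ from the instanton hypothesis, hence $\sb T=0$ everywhere; the converse direction is pointwise and identical to the above.

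The main obstacle I expect is organizing the representation-theoretic bookkeeping: $\Lambda^2 = \mathfrak{su}(3)\oplus\mathbb{R}\oplus\mathfrak{u}(1)^\perp$ and $\Lambda^3$ both decompose into several $SU(3)$-irreducibles, the torsion $T$ and $\sb T$ spread across them, and the quadratic term $T\ast T$ mixes them in a way that must be matched, component by component, against the two pieces of the instanton condition ($\Psi^+$-contraction and $\omega$-primitivity). Keeping track of which torsion classes survive after imposing "$\rho=0$", "$\sb\theta=0$", and "$|N|$ constant" — and verifying that the only surviving obstruction is genuinely $|\sb T|^2$ and not some cross term — is the delicate part; the Bianchi identity and Stokes' theorem are the only real tools, but the contraction identities for $\Psi^\pm$ and $\omega$ with 4-forms on a 6-manifold must be deployed carefully.
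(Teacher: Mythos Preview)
Your outline has the right architecture---Bianchi identity, $SU(3)$-irreducible decomposition, and a Bochner-type conclusion---but there is a genuine gap in the middle, and one outright misconception.

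The misconception is the sentence ``$|N|^2$ is, up to constants, $\sum_i |\sb_i T|^2$-type data combined with $\|T\|^2$''. It is not. On an ACYT 6-manifold $N=\lambda\Psi^+ + \mu\Psi^-$ is determined by two scalars, and constancy of $|N|^2$ means $d\lambda=d\mu=0$; this feeds into identities like $dT\lrcorner\Psi^{\pm}=0$ but says nothing about $|\sb T|^2$ or about $\|T\|$ being constant. You cannot differentiate $\|T\|^2$ and claim it vanishes from the hypotheses---constancy of $\|T\|$ is a nontrivial \emph{conclusion} along the way, not an input.

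The gap is that you do not isolate the two intermediate steps that actually make the argument close. First, one must show $d^{\sb}T=0$ (equivalently $dT=2\sigma^T$). This comes from contracting the Bianchi identities with $\Phi$ and $\Psi^{\pm}$, using the instanton condition and $\sb\theta=0$, to show that $X\lrcorner d^{\sb}T\in\Lambda^3_{12}$ for every $X$; one then needs the algebraic lemma that a 4-form with this property must vanish. Your proposal alludes to these contractions but does not recognize that the output is $d^{\sb}T=0$ rather than $\sb T=0$ directly. Second, with $d^{\sb}T=0$ in hand the curvature identity $R(X,Y,Z,V)-R(Z,V,X,Y)=(\sb_XT)(Y,Z,V)-(\sb_YT)(X,Z,V)$ and the (1,1)-type of $R$ on both pairs yield specific symmetry constraints on $\sb T$ (in complex indices: $\sb_{\alpha}C_{\bar\beta;\gamma\mu}=0$ and $\sb_{\alpha}C_{\beta;\bar\gamma\bar\mu}=\sb_{\beta}C_{\alpha;\bar\gamma\bar\mu}$, where $C$ is the Chern-type torsion). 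These symmetries, together with $\sb\theta=0$, are what force $\|C\|^2$ to be constant; only \emph{then} does the Laplacian computation $0=\sb_{\alpha}\sb_{\bar\alpha}\|C\|^2$ collapse to $\|\sb C\|^2=0$. Your ``pointwise algebraic identity that can only hold if $\sb T=0$'' skips both steps and does not explain what identity that would be.

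For the easy direction you are working harder than necessary: if $\sb T=0$ then $R(X,Y,Z,V)=R(Z,V,X,Y)$, and since $R$ already takes $\mathfrak{su}(3)$-values in the second pair (ACYT), the pair symmetry immediately gives the instanton condition on the first pair.
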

 It is known  from \cite[Proposition~4.2]{IS1} that if the ACYT space is balanced, i.e. the Lee form vanishes, then the Nijenhuis tensor  has constant norm and is $\sb$-parallel. 
As a consequences of Theorem~\ref{main}, we derive
\begin{cor}\label{main2} Let $(M,g,J,\Psi)$ be a balanced ACYT 6-manifold.

Then the torsion connection $\sb$ is an $SU(3)$--instanton if and only if  the torsion is $\sb$-parallel, $\sb T=0$.
\end{cor}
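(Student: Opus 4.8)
The plan is to deduce Corollary~\ref{main2} directly from Theorem~\ref{main}. For a balanced ACYT 6-manifold the Lee form $\theta$ vanishes identically, so in particular it is $\sb$-parallel (trivially, $\sb\theta=0$). It therefore remains to check the hypothesis that the Nijenhuis tensor has constant norm. This is exactly the content of \cite[Proposition~4.2]{IS1}, quoted in the excerpt just before the statement of the corollary: on a balanced ACYT space the Nijenhuis tensor $N$ has constant norm and is in fact $\sb$-parallel, $\sb N=0$. Hence both standing assumptions of Theorem~\ref{main}, constant norm of the Nijenhuis tensor and $\sb$-parallel Lee form, are satisfied automatically in the balanced case.

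Once these hypotheses are verified, Theorem~\ref{main} applies verbatim and yields the stated equivalence: the torsion connection $\sb$ is an $SU(3)$-instanton if and only if $\sb T=0$. So the proof of the corollary is a two-line invocation: (i) $\theta\equiv 0\Rightarrow\sb\theta=0$; (ii) balanced ACYT $\Rightarrow$ $|N|$ constant by \cite[Proposition~4.2]{IS1}; (iii) apply Theorem~\ref{main}.

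The only place where there is anything at all to say is the logical bookkeeping that Theorem~\ref{main} is stated for a general ACYT 6-manifold with those two curvature/torsion-type hypotheses, not specifically for the balanced case, so one must simply observe that \textquotedblleft balanced\textquotedblright\ is a special case of the setup. There is no real obstacle here; the genuine analytic work — the integration-by-parts / Bochner-type argument relating the instanton condition to $\sb T=0$ — is entirely contained in the proof of Theorem~\ref{main}, and the corollary is purely a specialization. If one instead wanted a self-contained proof of the corollary that did not cite Theorem~\ref{main}, the main step would be the same Weitzenb\"ock-type identity exploited there, now simplified by the vanishing of the Lee form and the extra identity $\sb N=0$; but given that Theorem~\ref{main} is available, no such repetition is needed.
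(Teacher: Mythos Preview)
Your proof is correct and matches the paper's own argument exactly: the paper derives Corollary~\ref{main2} from Theorem~\ref{main} by noting that a balanced ACYT has $\theta=0$ (hence $\sb\theta=0$) and that $|N|$ is constant by \cite[Proposition~4.2]{IS1}. There is nothing to add.
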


In the compact case we show
\begin{thrm}\label{mainc}
Let $(M,g,J,\Psi)$ be a compact  ACYT 6-manifold with co-closed  Lee form.

Then the torsion connection $\sb$ is an $SU(3)$--instanton if and only if  the torsion is $\sb$-parallel, $\sb T=0$.
\end{thrm}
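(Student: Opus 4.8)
The plan is to reduce the compact case to the algebraic identity that drives Theorem~\ref{main}, the only new ingredient being an integration by parts that replaces the pointwise hypotheses ``constant norm of the Nijenhuis tensor'' and ``$\sb$--parallel Lee form'' by the single global hypothesis that the Lee form is co-closed. First I would recall that for an ACYT 6-manifold the torsion 3-form $T$ decomposes into its Nijenhuis part and a part built from the Lee form $\theta$, and the instanton condition on $\sb$ is equivalent to a relation of the schematic shape $R^\sb\!\cdot\!\Omega=0$ (curvature annihilates the fundamental form), which by the first Bianchi identity for connections with skew torsion can be rewritten purely in terms of $\sb T$, $T\!\wedge\! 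T$ and $d T$. The key computational step, already available from the proof of Theorem~\ref{main}, is a Weitzenböck-type formula expressing $\|\sb T\|^2$ as a divergence term plus a sum of quadratic expressions in the torsion that are manifestly controlled once $\sb\theta$ and $\|N\|$ are under control.

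The substitute for those pointwise hypotheses is the following. On a compact manifold, $\delta\theta=0$ lets me integrate the divergence terms to zero, so integrating the Weitzenböck identity over $M$ I would obtain $\int_M \|\sb T\|^2 = \int_M (\text{instanton defect}) + \int_M (\text{lower-order terms})$, where after using $\delta\theta=0$ and the ACYT condition $\rho=0$ (which kills the Ricci-form contributions that otherwise obstruct the argument) the lower-order terms either vanish or reassemble, via a second integration by parts using the structure equations $d\Psi^\pm$ of the $SU(3)$-structure, into a non-negative multiple of $\int_M\|\sb T\|^2$ again. Concretely I expect the co-closedness of $\theta$ to force $\int_M\|\sb\theta\|^2\le 0$-type control and simultaneously to make the $\|N\|$ no longer appear with an indefinite sign, so that the whole right-hand side collapses. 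Then $\sb$ being an $SU(3)$-instanton makes the defect vanish, forcing $\int_M\|\sb T\|^2=0$, hence $\sb T=0$; and the converse direction ($\sb T=0\Rightarrow$ instanton) is the easy implication, handled exactly as in Theorem~\ref{main} since it is pointwise and uses no global hypothesis.

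I would organize the write-up as: (i) state the pointwise Weitzenböck/Bianchi identity for $\sb T$ on a $6$-dimensional ACYT structure, quoting or re-deriving it from the proof of Theorem~\ref{main}; (ii) identify each term's divergence structure and integrate over the compact $M$; (iii) invoke $\delta\theta=0$ and $\rho=0$ to discard or recombine terms, obtaining $\int_M\|\sb T\|^2\le C\int_M(\text{instanton defect})$ with $C>0$; (iv) conclude both implications. The main obstacle I anticipate is step (iii): showing that with only $\delta\theta=0$ (rather than $\sb\theta=0$) the various cross terms involving $d\theta$, $\theta\lrcorner T$ and $N$ still integrate to something of a single sign. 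This is where the special features of dimension six and the $SU(3)$ structure equations (the explicit formulas for $d\Omega$, $d\Psi^+$, $d\Psi^-$ in terms of $\theta$ and $N$) must be used, together with the vanishing of the Ricci form $\rho$, to convert an a priori indefinite integrand into a divergence plus a non-negative remainder.
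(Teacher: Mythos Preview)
Your plan has a genuine gap: you assume that the dependence on $\sb\theta$ in the proof of Theorem~\ref{main} enters only through divergence terms that integration by parts will kill. That is not how the argument is structured. The hypothesis $\sb\theta=0$ is used \emph{pointwise} at several stages before any Weitzenb\"ock-type identity for $\|\sb T\|^2$ even becomes available: it is needed to establish $d^{\sb}T=0$ (Theorem~\ref{dt0}, specifically the last equality in \eqref{inst3}), which in turn is what produces the crucial symmetries \eqref{inst7}--\eqref{inst14} of $\sb T$; and it is needed again in \eqref{ricid} and \eqref{par} to close the computation in Lemma~\ref{instl} and Theorem~\ref{main1}. None of these are divergence terms---they are algebraic identities that simply fail without $\sb\theta=0$, so there is nothing to integrate. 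Your scheme of ``quote the identity from Theorem~\ref{main}, then integrate'' therefore cannot get started.

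The paper's route is different and more indirect: it first shows, as a separate result (Theorem~\ref{gaud}), that on a compact ACYT 6-manifold with $\delta\theta=0$, the instanton condition already forces $\sb\theta=0$. This is done via an elliptic inequality for $\|\theta\|^2$, namely
\[
\Delta\|\theta\|^2 + 3\,\theta_i\sb_i\|\theta\|^2 = -\|\sb\theta\|^2 \le 0,
\]
to which the strong maximum principle is applied. (One could also integrate this identity directly, using $\delta\theta=0$ to kill the drift term.) A further ingredient you did not mention is that compactness alone gives $\sb N=0$ and hence constant $\|N\|$ on any ACYT 6-manifold (Theorem~\ref{thnnew}), so that hypothesis comes for free. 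Once both $\|N\|=\mathrm{const}$ and $\sb\theta=0$ are secured, Theorem~\ref{mainc} follows immediately from Theorem~\ref{main}. The missing idea in your proposal is this two-step reduction: prove $\sb\theta=0$ first, rather than trying to absorb the $\sb\theta$-terms into an integrated estimate for $\|\sb T\|^2$.
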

For a complex space, the next corollary is straightforward consequence of Theorem~\ref{main} and Theorem~\ref{mainc}.
\begin{cor}\label{comm}
 a) On a    CYT 6-manifold  with $\sb$-parallel Lee form the curvature of the Strominger-Bismut connection is an  $SU(3)$--instanton if and only if  the torsion is $\sb$-parallel, $\sb T=0$.

b)  On a   compact CYT 6-manifold  with co-closed Lee form the curvature of the Strominger-Bismut connection is an  $SU(3)$--instanton if and only if  the torsion is $\sb$-parallel, $\sb T=0$.

c) On a balanced CYT 6-manifold  the curvature of the Strominger-Bismut connection is an  $SU(3)$--instanton if and only if  the torsion is $\sb$-parallel, $\sb T=0$.
\end{cor}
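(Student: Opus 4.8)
The plan is to derive Corollary~\ref{comm} directly from Theorem~\ref{main}, Corollary~\ref{main2}, and Theorem~\ref{mainc} by specializing the almost complex structure to an integrable one. First I would note that on a CYT 6-manifold the almost complex structure $J$ is integrable, so the Nijenhuis tensor vanishes identically; in particular it has constant (zero) norm and is trivially $\sb$-parallel. Hence a CYT manifold is precisely an ACYT 6-manifold whose Nijenhuis tensor is zero, and the torsion connection $\sb$ coincides with the Strominger-Bismut connection (the unique connection preserving the Hermitian $SU(3)$-structure with totally skew-symmetric torsion). With this identification the three parts of the corollary become pure specializations.

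For part (a), I would invoke Theorem~\ref{main}: the hypothesis there asks for an ACYT 6-manifold with constant norm of the Nijenhuis tensor and $\sb$-parallel Lee form. On a CYT manifold with $\sb$-parallel Lee form the Nijenhuis tensor vanishes, so its norm is the constant $0$, and the $\sb$-parallel Lee form hypothesis is assumed outright; thus Theorem~\ref{main} applies verbatim and gives that $\sb$ (= Strominger-Bismut connection) is an $SU(3)$-instanton iff $\sb T = 0$. For part (b), I would apply Theorem~\ref{mainc} to the compact CYT 6-manifold with co-closed Lee form: again a CYT space is an ACYT space with vanishing Nijenhuis tensor, so the hypotheses of Theorem~\ref{mainc} (compact ACYT 6-manifold, co-closed Lee form) are met, yielding the stated equivalence. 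For part (c), I would apply Corollary~\ref{main2} to the balanced CYT 6-manifold: a balanced CYT space has vanishing Lee form, hence is a balanced ACYT 6-manifold, and Corollary~\ref{main2} gives the conclusion; alternatively one can note that by \cite[Proposition~4.2]{IS1} the balanced condition already forces the Nijenhuis tensor (zero here) to have constant norm and be $\sb$-parallel, and the vanishing Lee form is in particular $\sb$-parallel, so Theorem~\ref{main} applies as well.

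There is essentially no obstacle here beyond bookkeeping: the only point requiring a word of care is the identification of the torsion connection of the ACYT structure with the Strominger-Bismut connection in the integrable case, which is standard — by \cite[Theorem~10.1]{FI} the class $G_1$ contains the Hermitian manifolds and there the totally skew-symmetric torsion connection is exactly the Strominger-Bismut connection, whose torsion is $T = -d^c\omega(\cdot,\cdot,\cdot)$ with $\omega$ the fundamental form. One should also observe that the $SU(3)$-instanton condition for the curvature of $\sb$, the ACYT condition (vanishing of the Ricci form $\rho$), and the Lee form all translate unchanged under this identification, since they are defined in terms of $\sb$ and the $SU(3)$-structure $(g,J,\Psi)$ that the two descriptions share. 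Once these identifications are recorded, parts (a), (b), (c) follow immediately from Theorem~\ref{main}, Theorem~\ref{mainc}, and Corollary~\ref{main2} respectively, so the corollary is indeed, as stated, a straightforward consequence.
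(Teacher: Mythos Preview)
Your proposal is correct and matches the paper's approach: the paper states that Corollary~\ref{comm} is a ``straightforward consequence of Theorem~\ref{main} and Theorem~\ref{mainc}'' with no further argument, and your write-up simply unpacks this by noting that a CYT manifold is an ACYT manifold with $N=0$ (hence constant norm of $N$) and that the torsion connection is then the Strominger--Bismut connection. Your use of Corollary~\ref{main2} for part~(c) is also fine, since that corollary is itself deduced from Theorem~\ref{main}.
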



Note that Hermitian and almost Hermitian manifolds of type $G_1$ with $\sb$-parallel torsion 3-form  are investigated in  \cite{AFS,Scho}, recently in \cite{CMS,ZZ2,ZZ3,ZZ4}  and a large number of examples are given there. 
On the Hermitian manifold, 
the condition $\sb T=0$ is precisely  expressed in terms of the curvature of the Strominger-Bismut connection $\sb$ and its traces ( \cite[Theorem~1.1]{ZZ2,ZZ3}) and all compact CYT spaces with $\sb$-parallel torsion  are classified explicitly in \cite{ZZ4}.  The list presented  in \cite{ZZ2} coincides with the instanton CYT spaces in dimension 6 among the unimodular Lie algebras described in \cite{OU} 
 (see Section~\ref{Ex-Complex-case} for details).

For the $SU(3)$--Hull connection we have
\begin{thrm}\label{hul}
The $SU(3)$--Hull connection on an ACYT 6-manifold  is an $SU(3)$--instanton if and only if the torsion is closed, $dT=0$.
\end{thrm}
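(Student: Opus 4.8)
The plan is to exploit the fact that the $SU(3)$--Hull connection $\sb^-$ is, by definition, the metric connection with totally skew-symmetric torsion $-T$, where $T$ is the torsion of the characteristic connection $\sb$ of the $G_1$ $SU(3)$-structure. The instanton condition for $\sb^-$ is that its curvature $R^-$ satisfies the $SU(3)$-instanton equations, i.e. $R^-$ lies pointwise in the Lie algebra $\mathfrak{su}(3)\subset\mathfrak{so}(6)$ when viewed as a 2-form valued endomorphism; concretely this amounts to the two conditions $R^-(X,Y,Z,W)$ being of type $(1,1)$ in each pair and the $(1,1)$-trace of $R^-$ with respect to the fundamental form vanishing (the $\mathfrak{u}(3)$-part is primitive). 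First I would write down the standard identity relating the curvatures of $\sb^-$ and $\sb$: for a metric connection with torsion $\pm T$ one has
\begin{equation*}
R^-(X,Y,Z,W) = R(Z,W,X,Y) + (\sb_X T)(Y,Z,W) - (\sb_Y T)(X,Z,W) - \s_T(X,Y,Z,W),
\end{equation*}
where $\s_T(X,Y,Z,W) = \sum g(T(X,Y,\cdot),T(Z,W,\cdot))$-type cyclic terms are algebraic in $T$ (the precise coefficients I would take from the curvature identities already used earlier in the paper). The key algebraic point is that the purely quadratic term $\s_T$ and the curvature $R$ of the ACYT connection are \emph{automatically} of the correct $SU(3)$-type — $R$ because $\sb$ preserves the $SU(3)$-structure and has vanishing Ricci form (ACYT), and $\s_T$ because on a $G_1$ $SU(3)$-structure the torsion $T$ is itself compatible in the relevant sense. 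Hence the deviation of $R^-$ from being an $SU(3)$-instanton is governed entirely by the $\sb T$-terms.

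The second step is to analyze exactly when those $\sb T$-terms produce something of pure $SU(3)$-type. The combination $(\sb_X T)(Y,Z,W) - (\sb_Y T)(X,Z,W)$ is, up to the algebraic symmetrization, controlled by $dT$ together with the ``co-differential'' type contraction $\sb_e T(e,\cdot,\cdot)$; and on an ACYT 6-manifold the latter contraction is expressible through the Lee form and the Nijenhuis tensor via the structure equations, so it never obstructs the $SU(3)$-type — only $dT$ does. Concretely, I expect to show that $R^-$ is an $SU(3)$-instanton if and only if the 4-form $dT$ vanishes, by checking that $dT$ is precisely the obstruction that lands outside $\mathfrak{su}(3)$ in the decomposition of $\Lambda^2\otimes\Lambda^2$. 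One clean way to organize this: decompose $dT = (dT)^{3,1+1,3} + (dT)^{2,2}$ and, within $(dT)^{2,2}$, its primitive and non-primitive parts; then match each piece against the corresponding component of $R^-$ and show that the instanton condition forces each to vanish, hence $dT=0$, while conversely $dT=0$ immediately kills all the bad components.

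The main obstacle will be bookkeeping the type decomposition of the mixed term $(\sb T)$ on a genuinely almost complex (non-integrable) $SU(3)$-structure: unlike the Hermitian case, the Nijenhuis tensor $N$ is nonzero and enters the first Bianchi identity for $\sb$ and the expression for $dT$, so one must be careful that the $N$-contributions to $dT$ and to the curvature conspire correctly and do not secretly impose extra conditions beyond $dT=0$. I would handle this by invoking the first Bianchi identity for the torsion connection, $\mathfrak{S}_{X,Y,Z}R(X,Y,Z,W) = \mathfrak{S}_{X,Y,Z}\big[(\sb_X T)(Y,Z,W) + \s_{T}(\cdot)\big] + dT(X,Y,Z,W)$ (in the normalization fixed earlier in the paper), which already packages the $N$-dependence, together with the ACYT conditions ($\sb$-parallel volume form $\Psi$, vanishing Ricci form) to eliminate all terms except the $dT$ contribution. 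No hypothesis on the Lee form or on the norm of the Nijenhuis tensor is needed here, which is why Theorem~\ref{hul} is stated for all ACYT 6-manifolds — a point I would emphasize as a contrast with Theorems~\ref{main} and~\ref{mainc}.
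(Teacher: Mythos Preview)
Your proposal has a genuine gap at its central step. You plan to write $R^-(X,Y,Z,W) = R(Z,W,X,Y) + (\sb_X T)(Y,Z,W) - (\sb_Y T)(X,Z,W) - \sigma_T(\dots)$ and then argue that the $R$ term and the $\sigma_T$ term are ``automatically of the correct $SU(3)$-type'', leaving only the $\sb T$ terms to analyze. The claim about $\sigma^T$ is unjustified: there is no reason the quadratic form $\sigma^T$ built from $T$ should have its contractions with $\Psi^{\pm}$ or $\Phi$ vanish on a general ACYT $6$-manifold (indeed, in the proof of Theorem~\ref{dt0} the paper has to \emph{deduce} $\sigma^T_{abci}\Psi^{\pm}_{abc}=0$ from the instanton hypothesis together with \eqref{inst4}, it is not automatic). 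So your separation of the obstruction into ``harmless $\sigma^T$'' plus ``$\sb T$ controlled by $dT$'' does not go through, and the subsequent type-decomposition analysis of $dT$ never gets off the ground.

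The paper bypasses this entirely by using the single clean identity \eqref{hust},
\[
R(X,Y,Z,V)-R^h(Z,V,X,Y)=\tfrac12\,dT(X,Y,Z,V),
\]
which packages the $\sb T$ and $\sigma^T$ contributions together as exactly $\tfrac12 dT$. Then the $SU(3)$-holonomy of $\sb$ (giving \eqref{rr} in the last pair of $R$) and the instanton condition on $R^h$ (giving \eqref{rr1} in the first pair of $R^h$) combine to force $dT_{iabc}\Psi^{\pm}_{abc}=0$ and $dT_{iabc}\Phi_{jabc}=0$; for the $\Phi$-contraction one also uses that the trace of \eqref{hust} gives $Ric=Ric^h$ with indices swapped. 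This says $(X\lrcorner dT)\in\Lambda^3_{12}$ for every $X$, and the algebraic Proposition~\ref{4-for} then forces $dT=0$. You never invoke this proposition, but it is the mechanism that converts the three scalar contractions into the full vanishing of the $4$-form; a type-by-type decomposition of $dT$ as you sketch would be considerably more laborious and would still need an argument of this kind at the end.
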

Theorem \ref{hul} connected  compact  6-dimensional ACYT having $SU(3)$--instanton Hull connection with type II string theories and the generalized Ricci flow developed by Garcia-Fernandez and Streets  (see \cite{GFS} and references therein) since these spaces  are steady generalized gradient Ricci solitons due to the results in \cite{IS1}. In \cite[Proposition~3.21]{GJS} (see also \cite{JS}), it is shown that for any pluriclosed ($dT=0$) Hermitian manifold, the curvature of its Hull connection is related to  the curvature of the Chern connection of the induced generalized Hermitian metric on the  associated orthogonal holomorphic vector bundle. This result, together with our Theorem~\ref{hul}, provides an interpretation of Hull instanton on a CYT 6-manifold as Chern instanton on the associated orthogonal holomorphic vector bundle. 
Moreover, in the 6-dimensional complex case, a structure theorem of  compact pluriclosed  CYT spaces  is recently presented in \cite[Theorem~1.1, Corollary~1.2]{ABS}, which in view of Theorem~\ref{hul} gives a description of the structure of compact CYT 6-dimensional manifolds with $SU(3)$--instanton Hull connection.

 It is known that on a compact  non-K\"ahler CYT space  with an exact  non-zero  Lee form the curvature of the  Chern connection is never an instanton. For completeness, a proof of this fact is included  in  Appendix~\ref{cins} based on \cite{IP2}.
 
\begin{conv}
Everywhere in the paper, we will make no difference between tensors and the corresponding forms via the metric as well as we will use Einstein summation conventions, i.e. repeated Latin indices are summed over from $1$ to $2n$ while repeated Greek indices are summed over from $1$ to $n$.
\end{conv}

\section{Preliminaries}
In this section, we recall some known curvature properties of a metric connection with totally skew-symmetric torsion on Riemannian manifolds as well as
the notions and existence of a metric linear connection preserving a given Almost Hermitian structure and having skew-symmetric torsion from \cite{I,FI,IS,IS1}.
\subsection{Metric connection with totally skew-symmetric  torsion and its curvature}
Let $(M,g,\sb)$ be a Riemannian manifold with a metric connection $\sb$ and $T(X,Y)=\sb_XY-\sb_YX-[X,Y]$ be its torsion. The torsion of type (0,3) is denoted by the same letter and it is defined by $T(X,Y,Z)=g(T(X,Y),Z)$. The torsion is totally skew-symmetric if $T(X,Y,Z)=-T(X,Z,Y)$.

On a Riemannian manifold $(M,g)$ of dimension $n$ any metric connection $\sb$ with totally skew-symmetric torsion $T$ is connected with the Levi-Civita connection $\sb^g$ of the metric $g$ by
\begin{equation}\label{tsym}
\sb^g=\sb- \frac12T \quad leading\quad to \quad \LC T=\sb T+\frac12\sigma^T,
\end{equation}
where the 4-form $\sigma^T$, introduced in \cite{FI}, is defined by
 \begin{equation}\label{sigma}
 \sigma ^T(X,Y,Z,V)=\frac12\sum_{j=1}^n(e_j\lrcorner T)\wedge(e_j\lrcorner T)(X,Y,Z,V), 
\end{equation} 
   $(e_a\lrcorner T)(X,Y)=T(e_a,X,Y)$ is the interior multiplication and $\{e_1,\dots,e_n\}$ is an orthonormal  basis.

The properties of the 4-form $\sigma^T$ are studied in detail in \cite{AFF} where it is shown that $\sigma^T$ measures the `degeneracy' of the 3-form $T$.

The exterior derivative $dT$ has the following  expression (see e.g. \cite{I,IP2,FI})
\begin{equation}\label{dh}
\begin{split}
dT(X,Y,Z,V)=d^{\sb}T(X,Y,Z,V) +2\sigma^T(X,Y,Z,V), \quad where\\
d^{\sb}T(X,Y,Z,V)=(\nabla_XT)(Y,Z,V)+(\nabla_YT)(Z,X,V)+(\nabla_ZT)(X,Y,V)-(\nabla_VT)(X,Y,Z).
 \end{split}
 \end{equation}

 For the curvature of $\sb$ we use the convention $ R(X,Y)Z=[\nabla_X,\nabla_Y]Z -
 \nabla_{[X,Y]}Z$ and $ R(X,Y,Z,V)=g(R(X,Y)Z,V)$. It has the well-known properties
$
 R(X,Y,Z,V)=-R(Y,X,Z,V)=-R(X,Y,V,Z)
$.

  The first Bianchi identity for $\nabla$ can be written in the  form (see e.g. \cite{I,IP2,FI})
 \begin{equation}\label{1bi}
 \begin{split}
 R(X,Y,Z,V)+ R(Y,Z,X,V)+ R(Z,X,Y,V)\\
 =dT(X,Y,Z,V)-\sigma^T(X,Y,Z,V)+(\nabla_VT)(X,Y,Z).
 \end{split}
 \end{equation}
It is proved in \cite[p.307]{FI} that the curvature of  a metric connection $\sb$ with totally skew-symmetric torsion $T$  satisfies also the  identity
 \begin{equation}\label{gen}
 \begin{split}
 R(X,Y,Z,V)+ R(Y,Z,X,V)+ R(Z,X,Y,V)-R(V,X,Y,Z)-R(V,Y,Z,X)-R(V,Z,X,Y)\\
 =\frac32dT(X,Y,Z,V)-\sigma^T(X,Y,Z,V),
 \end{split}
 \end{equation}
 which combined with \eqref{1bi} yields \cite{IS, IS1}
 \begin{equation}\label{1bi1}
 \begin{split}
R(V,X,Y,Z)+R(V,Y,Z,X)+R(V,Z,X,Y)= -\frac12dT(X,Y,Z,V)+(\nabla_VT)(X,Y,Z)
 \end{split}
 \end{equation}
It is known that a metric connection $\sb$ with  torsion 3-form $T$ has curvature $R\in S^2\Lambda^2$, i.e. it satisfies 
 \begin{equation}\label{r4}
 R(X,Y,Z,V)=R(Z,V,X,Y)
 \end{equation}
 if and only if the covariant derivative of the torsion with respect to the torsion connection  is a 4-form.
\begin{lemma}\cite[Lemma~3.4]{I} The next equivalences hold for a metric connection with torsion 3-form
\begin{equation}\label{4form}
(\sb_XT)(Y,Z,V)=-(\sb_YT)(X,Z,V) \Longleftrightarrow R(X,Y,Z,V)=R(Z,V,X,Y) ) \Longleftrightarrow dT=4\LC T.
\end{equation}
\end{lemma}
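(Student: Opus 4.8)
The plan is to show that the remaining two conditions are each equivalent to a single reformulation of the first. Since $T$ is a $3$-form, the $(0,4)$-tensor $(X,Y,Z,V)\mapsto(\sb_XT)(Y,Z,V)$ is automatically skew-symmetric in $Y,Z,V$, so the condition $(\sb_XT)(Y,Z,V)=-(\sb_YT)(X,Z,V)$ is equivalent to saying that $\sb T$ is a $4$-form. I will prove $dT=4\LC T\Longleftrightarrow\sb T\in\Lambda^4$ and $R(X,Y,Z,V)=R(Z,V,X,Y)\Longleftrightarrow\sb T\in\Lambda^4$.

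For the first equivalence I would use that, for a torsion-free connection, the Cartan formula for the exterior derivative of a $3$-form $\mu$ reads $d\mu(X,Y,Z,V)=(\LC_X\mu)(Y,Z,V)+(\LC_Y\mu)(Z,X,V)+(\LC_Z\mu)(X,Y,V)-(\LC_V\mu)(X,Y,Z)$; since $\LC\mu$ is already skew in its last three arguments, the right-hand side is exactly four times the total antisymmetrization of $\LC\mu$, so $d\mu=4\LC\mu$ holds if and only if $\LC\mu\in\Lambda^4$. Taking $\mu=T$ and using \eqref{tsym} in the form $\LC T=\sb T+\tfrac12\sigma^T$, together with the fact (from \eqref{sigma}) that $\sigma^T$ is a $4$-form, gives $dT=4\LC T\Longleftrightarrow\LC T\in\Lambda^4\Longleftrightarrow\sb T\in\Lambda^4$. (The same argument run with $d^{\sb}T$ in place of $d\mu$, invoking \eqref{dh}, yields the statement directly for $\sb$.)

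For the pair-symmetry equivalence I would set $Q(X,Y,Z,V):=R(X,Y,Z,V)-R(Z,V,X,Y)$. By the symmetries of $R$ recalled just before \eqref{1bi}, $Q$ is skew in the first pair, skew in the last pair, and pair-antisymmetric, $Q(Z,V,X,Y)=-Q(X,Y,Z,V)$. Summing $Q$ cyclically over $X,Y,Z$ and using the curvature antisymmetries together with \eqref{1bi} for the terms $R(X,Y,Z,V),R(Y,Z,X,V),R(Z,X,Y,V)$ and \eqref{1bi1} (a consequence of \eqref{gen}) for the remaining three, and then trading $dT$ and $\sigma^T$ for $d^{\sb}T$ via \eqref{dh}, one obtains after collecting coefficients
\begin{multline*}
R(X,Y,Z,V)+R(Y,Z,X,V)+R(Z,X,Y,V)-R(Z,V,X,Y)-R(X,V,Y,Z)-R(Y,V,Z,X)\\
=2\big((\sb_VT)(X,Y,Z)-\omega(V,X,Y,Z)\big),
\end{multline*}
where $\omega$ denotes the totally skew-symmetric part of $\sb T$. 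Hence the cyclic Bianchi sum of $Q$ vanishes identically if and only if $\sb T=\omega$, i.e. if and only if $\sb T\in\Lambda^4$.

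To finish I would invoke the classical algebraic lemma that a $(0,4)$-tensor which is skew in each pair and satisfies the first Bianchi identity is automatically pair-symmetric. Since $Q$ is pair-antisymmetric by construction, the vanishing of its cyclic sum forces $Q$ to be both pair-symmetric and pair-antisymmetric, hence $Q\equiv0$; the converse implication is immediate. Thus $R(X,Y,Z,V)=R(Z,V,X,Y)\Longleftrightarrow Q\equiv0\Longleftrightarrow\sb T\in\Lambda^4$, which closes the chain of equivalences. The only genuinely delicate point is the coefficient bookkeeping in the cyclic-sum computation that produces the displayed identity for $Q$; conceptually, the single place where the structure of $Q$ is essential is the use of the curvature-symmetry lemma, where the pair-antisymmetry of $Q$ is exactly what turns pair symmetry into vanishing.
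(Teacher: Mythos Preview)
Your argument is correct. The paper does not include its own proof of this lemma; it is simply quoted from \cite[Lemma~3.4]{I}. Your treatment of the equivalence $dT=4\LC T\Longleftrightarrow\sb T\in\Lambda^4$ via the Cartan formula and \eqref{tsym} is the natural one, and for the pair-symmetry equivalence your route---compute the cyclic sum of $Q$ from \eqref{1bi} and \eqref{1bi1}, identify it with $2\big((\sb_VT)(X,Y,Z)-\omega(V,X,Y,Z)\big)$, and then invoke the standard ``Bianchi $\Rightarrow$ pair-symmetry'' lemma together with the built-in pair-antisymmetry of $Q$---is clean and complete.

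For comparison, the paper later quotes from the same reference (as \eqref{inst1}) the pointwise identity
\[
2R(X,Y,Z,V)-2R(Z,V,X,Y)=(\sb_XT)(Y,Z,V)-(\sb_YT)(X,Z,V)-(\sb_ZT)(X,Y,V)+(\sb_VT)(X,Y,Z),
\]
which expresses $Q$ itself, not just its cyclic sum, directly in terms of $\sb T$. From this the implication $\sb T\in\Lambda^4\Rightarrow Q\equiv0$ is immediate, and the converse follows by a short symmetry manipulation (e.g.\ interchanging $X$ and $Z$ in the vanishing right-hand side and adding yields $(\sb_XT)(Y,Z,V)=(\sb_ZT)(X,Y,V)$, which together with skewness in the last three slots forces $\sb T\in\Lambda^4$). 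Your cyclic-sum approach trades this ad hoc symmetry chase for an appeal to a classical algebraic fact, which is arguably more conceptual; the identity \eqref{inst1} is more direct but requires one to see the right combination of substitutions.
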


 The   Ricci tensors and scalar curvatures of the connections $\LC$ and $\sb$ are related by \cite[Section~2]{FI}, (see also \cite [Prop. 3.18]{GFS})
\begin{equation}\label{rics}
\begin{split}
Ric^g(X,Y)=Ric(X,Y)+\frac12 (\delta T)(X,Y)+\frac14\sum_{i=1}^n g(T(X,e_i),T(Y,e_i));\\
Scal^g=Scal+\frac14||T||^2,\qquad Ric(X,Y)-Ric(Y,X)=-(\delta T)(X,Y),
\end{split}
\end{equation}
where $\delta=(-1)^{np+n+1}*d*$ is the co-differential acting on $p$-forms and $*$ is the Hodge star operator satisfying $*^2=(-1)^{p(n-p)}$.


\subsection{Torsion connection preserving an almost Hermitian structure}In this section we recall the notions and existence of a metric linear connection preserving a given almost Hermitian structure and having totally skew-symmetric torsion from \cite{FI}.

Let $(M,g,J)$ be an almost Hermitian 2n-dimensional manifold with Riemannian
metric $g$ and an almost complex structure $J$ satisfying
$$g(JX,JY)=g(X,Y).$$
The Nijenhuis tensor $N$, the K\"ahler
form $F$ and the Lee form $\theta$ are defined by
\begin{equation}\label{cy1}
N=[J.,J.]-[.,.]-J[J.,.]-J[.,J.], \quad F=g(.,J.), \quad \theta(.)=\delta F(J.),
\end{equation}
respectively, where $\delta=-*d*$ is the co-differential on an even dimensional manifold.

Consequently, the  1-form $J\theta$ defined by $J\theta(X)=-\theta(JX)$ is co-closed due to $\delta(J\theta)=-\delta^2 F=0$.

\noindent The Nijenhuis tensor of type (0,3),  denoted by the same letter, is defined by $N(X,Y,Z)=g(N(X,Y),Z)$.

In general, on an almost Hermitian manifold, there not always exists a metric connection with totally skew-symmetric torsion preserving the almost Hermitian structure.
 It is shown in \cite[Theorem~10.1]{FI} that there
exists a unique linear connection $\sb$ preserving an almost hermitian
structure  and having totally skew-symmetric torsion $T$  if and only if
the Nijenhuis tensor $N$ is a 3-form.  In this case the 3-form$N$ is of type (3,0)+(0,3) with respect to the almost complex structure $J$. This is precisely the class $G_1$ in the Gray-Hervella classification of almost Hermitian manifolds \cite{GrH} and includes Hermitian manifolds, $N=0$ as well as nearly K\"ahler spaces, $(\LC_XJ)X=0$. We call this connection \emph{the torsion connection}.

The torsion $T$ of the torsion connection is
determined by \cite[Theorem~10.1]{FI}
\begin{equation}\label{cy2}
 T=JdF+N=-dF(J.,J.,J.)+N=-dF^+(J.,J.,J.)+\frac{1}{4}N,
\end{equation}
where $dF^+$ denotes the (1,2)+(2,1)-part of $dF$ with respect to $J$ and satisfies the equality
\begin{equation}\label{12}
\begin{split}
dF^+(X,Y,Z)=dF^+(JXJ,Y,Z)+dF^+(JX,Y,JZ)+dF^+(X,JY,JZ).
\end{split}
\end{equation}
The (3,0)+(0,3)-part $dF^-$ obeys the identity
$dF^-(JX,Y,Z)=dF^-(X,JY,Z)=dF^-(X,Y,JZ)$
and  is determined
completely by the Nijenhuis tensor \cite{gauduchon}.  If $N$ is a three form  then (see e.g.\cite{gauduchon,FI})
\begin{equation*}
dF^-(X,Y,Z)=\frac{3}{4}N(JX,Y,Z).
\end{equation*}
The Lee form $\theta:=\delta F\circ J$ of the $G_1$- manifold is given in terms of the  torsion $T$ as
 \begin{equation}\label{liff}
\theta(X)=-\frac12T(JX,e_i,Je_i)=\frac12g(T(JX),F)=\frac12(F\lrcorner
T(JX))~,~~~\theta_i={1\over 2} J^k{}_i T_{kj\ell} F_{j\ell}~,
\end{equation}
where $\{e_1,\dots,e_n,e_{n+1}=Je_1,\dots,e_{2n}=Je_n\}$ is an orthonormal basis.

When the almost complex structure is integrable, $N=0$, the torsion connection was defined and studied by Strominger \cite{Str} in connection with the heterotic string background and was used by Bismut to prove a local index theorem for
the Dolbeault operator on Hermitian non-K\"ahler manifold \cite{bismut}. This formula was applied also in string theory (see e.g. \cite{BBE}). In the Hermitian case, the torsion connection is also
known as the Strominger-Bismut connection and has attained a lot of consideration both in mathematics and physics (see the Introduction).

In the nearly K\"ahler case,  the torsion connection $\sb$ is the characteristic connection
considered by Gray, \cite{gray} and the torsion $T$ and the Nijenhuis tensor are $\sb-$parallel, $\sb T=\sb N = 0$ (see e.g. \cite{Kir,BM}).

 In general $G_1$ manifold,  the holonomy group of $\sb$ is contained in $U(n)$. The reduction of the holonomy group of the torsion connection to a subgroup of $SU(n)$ can be expressed in terms of its Ricci 2-form which represents the first Chern class, namely
$$\rho(X,Y)=\frac12R(X,Y,e_i,Je_i)=0.
$$
We remark that the formula  \cite[(3.16)]{IP2} holds in the general case of a $G_1$-manifold (see also \cite{FI}), namely, the next formula holds true
\begin{equation*}
\rho(X,Y)=Ric(X,JY)+(\sb_X\theta)(JY)+\frac14dT(X,Y,e_i,Je_i).
\end{equation*}
Thus, on a $G_1$ manifold the restricted holonomy group of the torsion connection is contained in $SU(n)$ if and only if the next  condition holds \cite[Theorem~10.5]{FI}
\begin{equation*}
Ric(X,Y)+(\sb_X\theta)(Y)-\frac14dT(X,JY,e_i,Je_i)=0.
\end{equation*}
The scalar curvature of the torsion connection and the Riemannian scalar curvature on an ACYT space are given by \cite{IS1}
\begin{equation*}
\begin{split}
Scal=3\delta\theta+2||\theta||^2-\frac13||dF^+||^2+\frac1{16}||N||^2;\qquad
Scal^g=3\delta\theta+2||\theta||^2-\frac13||dF^+||^2+\frac5{64}||N||^2.
\end{split}
\end{equation*}
It is shown in \cite{IS1} that if  the structure is balanced, $\theta=0$, and the torsion is closed, $dT=0$ then $16|dF|^2=3|N|^2$. In the complex case, $N=0$,  this confirms the old result first established in \cite{AI}  that  a balanced Hermitian manifold with closed torsion must be K\"ahler.

\section{SU(3)-structures}

Let $(M,g,J)$ be an almost Hermitian 6-manifold with a Riemannian
metric $g$ and an almost complex structure $J$, i.e. $(g,J)$
define an $U(3)$-structure.

An $SU(3)$-structure is determined by an additional  non-degenerate
(3,0)-form $\Psi=\Psi^++\sqrt{-1}\Psi^-$ of constant norm, or equivalently by a
non-trivial spinor. To be more explicit, we may choose a local
orthonormal frame  $e_1,\dots,e_6$,  identifying it  with the dual
basis via the metric. Write $e_{i_1 i_2\dots i_p}$ for the
monomial $e_{i_1} \wedge e_{i_2} \wedge \dots \wedge e_{i_p}$. An
$SU(3)$-structure is described locally by
\begin{equation}\label{AA}
\begin{split}
\Psi=-(e_1+\sqrt{-1}e_2)\wedge (e_3+\sqrt{-1}e_4)\wedge  (e_5+\sqrt{-1}e_6),\quad
F =-e_{12} - e_{34}- e_{56}, \\
\Psi^+ =-e_{135} + e_{236} + e_{146} +e_{245}, \quad
  \Psi^- =-e_{136} - e_{145} - e_{235} + e_{246}.
\end{split}
\end{equation}
These forms are subject to the compatibility relations
\begin{equation*}
F\wedge\Psi^{\pm}=0, \qquad \Psi^+\wedge\Psi^-=-\frac23 F^3.
\end{equation*}
The subgroup of $SO(6)$ fixing the forms $F$ and  $\Psi$ simultaneously is $SU(3)$.
The two forms $F$ and $\Psi$ determine the metric completely. The Lie algebra of $SU(3)$
is denoted $\frak{su}(3)$.

The failure of the holonomy group of the Levi-Civita connection $\sb^g$ of the metric $g$ to be reduced to $SU(3)$
can be measured by the intrinsic torsion $\tau$, which is identified with $dF, d\ps$ and $d\sp$
and can be decomposed into five classes \cite{CSal}, $\tau \in W_1\oplus \dots \oplus W_5$. The
intrinsic torsion of an $U(n)$- structure belongs to the first four components described
by Gray-Hervella \cite{GrH}. The five components of a $SU(3)$-structure are first described
by Chiossi-Salamon \cite{CSal} (for interpretation in physics see \cite{Car,GLMW,GM}) and are
determined by $dF,d\Psi^+,d\Psi^-$ as well as $dF$ and $N$. We list below those of these classes which
we will use later.
\begin{description}
\item[$\tau \in W_1:$] The class of Nearly K\"ahler (weak holonomy) manifold defined by
$dF$ to be (3,0)+(0,3)-form.
\item[$\tau \in W_3:$] The class of balanced Hermitian manifold determined by the
conditions $N=\theta=0$.
\item[$\tau \in W_1\oplus W_3\oplus W_4:$] The class called by Gray-Hervella $G_1$-manifolds
determined by the condition that the Nijenhuis tensor is totally skew-symmetric.
This is the precise class which we are interested in.
\item[$\tau \in W_1\oplus W_3:$] The class of balanced  $G_1$-manifolds
determined by the condition that the Nijenhuis tensor is totally skew-symmetric and the Lee form vanishes, $N(X,Y,Z)=-N(X,Z,Y),\quad \theta=0$.
\item[$\tau \in W_3\oplus W_4:$] The class of Hermitian manifolds, $N=0$.
\end{description}
If all five components are zero then we have a Ricci-flat K\"ahler (Calabi-Yau) 3-fold.

Let $(M,g,J,\Psi)$ be an $SU(3)$ manifold. Letting the four form $$\Phi=\frac12F^2=\frac12F\wedge F$$ we have the relations
\begin{equation}\label{star}
\begin{split}
*\Phi=-F,\quad *F=-\Phi,\qquad
*\Psi^+=\Psi^-,\quad *\Psi^-=-\Psi^+,\\
*(\alpha\wedge F)=-\alpha\lrcorner\Phi, \quad \alpha\in \Lambda^1,\qquad *(\alpha\wedge\Phi)=J\alpha=-\alpha\lrcorner F, \quad \alpha\in\Lambda^1,\\
*(\alpha\wedge\Psi^+)=-\alpha\lrcorner\Psi^- \quad \alpha\in \Lambda^1,\qquad
*(\alpha\wedge\Psi^-)=\alpha\lrcorner\Psi^+ \quad \alpha\in \Lambda^1,\\
*(\beta\wedge\ps)=\beta\lrcorner\sp, \quad \beta\in \Lambda^2,\qquad *(\beta\wedge\sp)=-\beta\lrcorner\ps, \quad \beta\in \Lambda^2.
\end{split}
\end{equation}
Extending the action of the almost complex structure on the exterior algebra, $J\beta=(-1)^p\beta(J.,\dots,J.)$ for a p-form $\beta\in\Lambda^p$, we recall the decomposition of the exterior algebra under the action of $SU(3)$ from \cite{BV} (see also \cite{LLR}). Let $V=T_pM$ be the vector space at a point $p\in M$ and $V^*$ be the dual vector space.  Obviously $SU(3)$
acts irreducibly on $V^*$ and $\Lambda^5V^*$, while $\Lambda^2V^*$ and $\Lambda^3V^*$ decompose as follows:
\begin{equation*}\label{dec}
\begin{split}
\Lambda^2V^*=\Lambda^2_1V^*\oplus\Lambda^2_6V^*\oplus\Lambda^2_8V^*,\\
\Lambda^3V^*=\Lambda^3_{Re}V^*\oplus\Lambda^3_{Im}V^*\oplus\Lambda^3_6V^*\oplus\Lambda^3_{12}V^*,
\end{split}
\end{equation*}
where
\begin{equation*}
\begin{split}
\Lambda^2_1V^*=\mathbb R.F, \qquad
\Lambda^2_6V^*=\{\phi\in \Lambda^2V^* | J\phi =-\phi\},\\
\Lambda^2_8V^*=\{\phi\in \Lambda^2V^* | J\phi=\phi, \phi\wedge F^2=0\}\\=\{\phi\in \Lambda^2V^* | *(\alpha\wedge F)=\alpha=\alpha\lrcorner *F=-\alpha\lrcorner\Phi\}\cong \frak{su}(3),\\
\Lambda^3_{Re}V^*=\mathbb R\Psi^+, \qquad \Lambda^3_{Im}V^*=\mathbb R\Psi^-,\\
\Lambda^3_6V^*=\{\alpha\wedge F  |  \alpha\in\Lambda^1V^*\},\\
\Lambda^3_{12}V^*=\{\gamma\in \Lambda^3V^* | \gamma\wedge F=\gamma\wedge\Psi^+=\gamma\wedge\Psi^-=0\}.
\end{split}
\end{equation*}
Writing
$$
F=\frac12F_{ij}e_{ij},\quad \Psi^+=\frac16\Psi^+_{ijk}e_{ijk}, \quad  \Psi^-=\frac16\Psi^-_{ijk}e_{ijk},\quad  \Phi=\frac1{24}\Phi_{ijkl}e_{ijkl},
$$
we have the obvious identites (c.f. \cite{BV})
\begin{equation}\label{iden}
\begin{split}
\Phi_{jslm}=F_{js}F_{lm}+F_{sl}F_{jm}+F_{lj}F_{sm},\\
\Psi^+_{ipq}F_{pq}=0=\Psi^-_{ipq}F_{pq}, \quad \Phi_{ijkl}\Psi^+_{jkl}=0=\Phi_{ijkl}\Psi^-_{jkl},\\
F_{ip}F_{pj}=-\delta_{ij},\quad \Psi^+_{ijs}F_{sk}=-\Psi^-_{ijk}, \quad \Psi^-_{ijs}F_{sk}=\Psi^+_{ijk},\\
\Psi^+_{ipq}\Psi^-_{jpq}=-4F_{ij}, \quad \Psi^+_{ipq}\Psi^+_{jpq}=4\delta_{ij}=\Psi^-_{ipq}\Psi^-_{jpq},\\
 \Phi_{ijkl}F_{kl}=4F_{ij},\quad \Phi_{ijkl}\Psi^+_{klp}=2\Psi^+_{ijp},\quad  \Phi_{ijkl}\Psi^-_{klp}=2\Psi^-_{ijp}.\\
\end{split}
\end{equation}
We need the following algebraic fact from \cite{IS1}
\begin{prop}\cite[Proposition~3.1]{IS1}\label{4-for}
Let $A$ be a 4-form and define the 3-form $B=(X\lrcorner A)$ for any $X\in T_pM$. If  the 3-form $B\in\Lambda^3_{12}$ then the four form $A$ vanishes identically, $A=0$.
\end{prop}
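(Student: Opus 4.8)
\textbf{Proof proposal for Proposition~3.1 (the algebraic fact from \cite{IS1}).}

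The plan is to decompose the $4$-form $A$ under the action of $SU(3)$ and show that, given the hypothesis on the contractions, every component must vanish. First I would invoke Hodge duality: since $*$ is an $SU(3)$-equivariant isomorphism $\Lambda^4 V^* \to \Lambda^2 V^*$, we may write $A = *\beta$ for a unique $2$-form $\beta$, and the decomposition $\Lambda^2 V^* = \Lambda^2_1 V^* \oplus \Lambda^2_6 V^* \oplus \Lambda^2_8 V^*$ splits $A$ into three pieces accordingly, say $A = A_1 + A_6 + A_8$ with $A_1 = c\,\Phi$ for a scalar $c$ (using $*F = -\Phi$), $A_6 = *(\phi_6)$ with $J\phi_6 = -\phi_6$, and $A_8 = *(\phi_8)$ with $\phi_8 \in \frak{su}(3)$. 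The key mechanism is that for a fixed generic $X$, the contraction $X\lrcorner A$ lands in $\Lambda^3 V^*$, and I want to track where each $A_j$ contributes under the further decomposition $\Lambda^3 V^* = \Lambda^3_{Re} \oplus \Lambda^3_{Im} \oplus \Lambda^3_6 \oplus \Lambda^3_{12}$. The hypothesis $B = X\lrcorner A \in \Lambda^3_{12}$ for \emph{every} $X$ (I would read the statement this way, as that is how it is used in the sequel) forces the $\Lambda^3_{Re}$, $\Lambda^3_{Im}$, and $\Lambda^3_6$ parts of $X\lrcorner A$ to vanish for all $X$.

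Next I would extract the consequences. The $\Lambda^3_6$-component being zero for all $X$ means $(X\lrcorner A)\wedge \Phi = 0$ for all $X$, equivalently $X\lrcorner(A\wedge\Phi) = (X\lrcorner A)\wedge\Phi \pm A\wedge(X\lrcorner\Phi)$; more cleanly, $(X\lrcorner A)\wedge \Psi^\pm = 0$ and $(X\lrcorner A)\wedge F = 0$ encode that $X\lrcorner A$ is pointwise in $\Lambda^3_{12}$, and since this holds for all $X$ one gets strong pointwise constraints on $A$ itself. Concretely, I would use the component formulas \eqref{iden}: writing $A = \frac1{24}A_{ijkl}e_{ijkl}$, the conditions $(X\lrcorner A)\wedge\Psi^+ = (X\lrcorner A)\wedge\Psi^- = 0$ translate into $A_{ijkl}\Psi^{\pm}_{jkl} = 0$-type identities (after suitable index contractions), and $(X\lrcorner A)\wedge F = 0$ gives $A_{ijkl}F_{kl}$-type vanishing. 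Feeding the explicit decomposition $A = c\Phi + *\phi_6 + *\phi_8$ through these: the $\Phi$ piece has $\Phi_{ijkl}F_{kl} = 4F_{ij} \ne 0$ by \eqref{iden}, so the $F$-trace condition forces $c = 0$ (once the contributions of $*\phi_6$ and $*\phi_8$ to that trace are computed and shown to be controlled); the $*\phi_8$ piece is killed because $\frak{su}(3)$-type $2$-forms $\phi_8$ satisfy $\phi_8 \wedge F^2 = 0$ and the primitivity propagates to make $X\lrcorner *\phi_8$ have nonzero $\Lambda^3_6$ or $\Lambda^3_{Re/Im}$ parts unless $\phi_8 = 0$; and the $*\phi_6$ piece, with $J\phi_6 = -\phi_6$, contracts to produce a nonzero $\Lambda^3_6$ (roughly $\alpha\wedge F$) component unless $\phi_6 = 0$.

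The cleanest route, and the one I would actually write up, avoids case-by-case and instead observes the following dimension count: the map $X \mapsto (X\lrcorner A)$ is an $SU(3)$-equivariant map $V^* \otimes \Lambda^4 V^* \to \Lambda^3 V^*$, and $\Lambda^3_{12}$ is the unique irreducible $12$-dimensional summand; requiring the image to land in $\Lambda^3_{12}$ for all $X$ picks out exactly the sub-representation of $\Lambda^4 V^* \cong \Lambda^2 V^*$ that maps into $\Lambda^3_{12}\otimes$(stuff), and a Schur's-lemma / weight computation shows that sub-representation is $\{0\}$ — because $\Lambda^2 V^* = \Lambda^2_1\oplus\Lambda^2_6\oplus\Lambda^2_8$ has dimensions $1+6+8$, and contracting any of these three irreducibles with $V^*$ (dimension $6$) always produces components outside $\Lambda^3_{12}$ (one can check on highest weights, or just on the explicit generators $F$, a representative of $\Lambda^2_6$, and a representative of $\frak{su}(3)$). \textbf{The main obstacle} I anticipate is the bookkeeping in this last step: one must verify, for a representative of each of the three irreducible types, that some contraction $X\lrcorner A$ genuinely has a nonzero $\Lambda^3_{Re}$, $\Lambda^3_{Im}$, or $\Lambda^3_6$ part — this is a finite but slightly tedious computation with the $\Psi^{\pm}$ and $F$ component identities \eqref{iden}, and the temptation to hand-wave "by representation theory" should be resisted in favor of exhibiting one explicit nonzero contraction in each case. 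Once that is done, $A_1 = A_6 = A_8 = 0$, hence $A = 0$, as claimed. $\hfill\square$
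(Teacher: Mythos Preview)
The paper does not supply a proof of this proposition: it is quoted verbatim from \cite[Proposition~3.1]{IS1} and used as a black box (in the proofs of Theorem~\ref{dt0} and Theorem~\ref{hul}). So there is no ``paper's own proof'' to compare against here.

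That said, your strategy is sound and is almost certainly what the original reference does. The Hodge-star identification $\Lambda^4 V^* \cong \Lambda^2 V^* = \Lambda^2_1 \oplus \Lambda^2_6 \oplus \Lambda^2_8$ is the natural move, and the three defining conditions of $\Lambda^3_{12}$ translate, for $B=X\lrcorner A$ and all $X$, into the index conditions
\[
A_{ijkl}\Psi^{+}_{jkl}=0,\qquad A_{ijkl}\Psi^{-}_{jkl}=0,\qquad A_{ijkl}F_{kl}=0.
\]
Writing $A=*\beta$ and using the $*$-identities in \eqref{star}, the first two say exactly that $\beta$ is orthogonal to every $e_i\lrcorner\Psi^{\mp}$, i.e.\ $\beta\perp\Lambda^2_6$; the third, together with the explicit formula for $\Phi$ in \eqref{iden}, forces both the $\Lambda^2_1$ and $\Lambda^2_8$ parts of $\beta$ to vanish (the $\Lambda^2_1$ part via $\Phi_{ijkl}F_{kl}=4F_{ij}\neq 0$, and the $\Lambda^2_8$ part because for $\phi\in\frak{su}(3)$ the contraction $X\lrcorner(*\phi)$ always has a nonzero $\Lambda^3_6$ component --- this is the one place where, as you correctly flag, one should write down an explicit representative rather than wave at Schur's lemma). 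Your ``cleanest route'' via equivariance and a dimension/multiplicity count is also legitimate, and in fact the index computation above \emph{is} that Schur-lemma argument made concrete. Either way the proof goes through; just make sure the $\Lambda^2_8$ case is actually exhibited and not left implicit.
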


\section{Almost Calabi-Yau with torsion space  in dimension 6}
The presence of a parallel spinor with respect to a metric connection with torsion 3-form  in dimension6 leads
to the existence of an almost
Hermitian structure and a linear
connection preserving the almost Hermitian structure with torsion
3-form 
with holonomy group inside 
$SU(3)$.

The reduction of the holonomy group of the torsion connection to $SU(3)$, was investigated intensively in the complex case, i.e. when the induced almost complex structure is integrable ($N=0$) and it is known as the Strominger condition. The Strominger condition (see \cite{Str}) has different very useful expressions in terms of the five torsion classes (see e.g. \cite{Car,LY,yau,yau1, II}). We present here the results from \cite{II,IS1}. 
\begin{thrm}\cite[Theorem~4.1]{II}\label{cythm1}
Let $(M,g,J,\Psi)$ be a 6-dimensional smooth manifold with an $SU(3)$-structure $(g,J,\Psi)$.
The next two conditions are equivalent:
\begin{enumerate}
\item[a)] $(M,g,J,\Psi)$ is an ACYT space, i.e. there exists a unique $SU(3)$-connection $\sb$ with torsion 3-form 
whose  holonomy is contained in SU(3),
 $\sb g=\sb J=\sb \Psi^+=\sb\Psi^-=0$;
\item[b)] The Nijenhuis tensor $N$ is totally skew-symmetric and the following two conditions
hold
\begin{equation}\label{cycon}
\begin{split}
d\Psi^+=\theta\wedge\Psi^+ -\frac{1}{4}(N,\Psi^+)* F; \qquad
d\Psi^-=\theta\wedge\Psi^- -\frac{1}{4}(N,\Psi^-)* F.
\end{split}
\end{equation}
 The torsion is given by
\begin{equation}\label{torcy}
\begin{split}
T=-*dF +*(\theta\wedge F)+\frac{1}{4}(N,\Psi^+)\Psi^+ + \frac{1}{4}(N,\Psi^-)\Psi^-.
\end{split}
\end{equation}
\end{enumerate}
\end{thrm}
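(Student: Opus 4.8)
The plan is to reduce the stated equivalence to a single scalar obstruction measuring the holonomy reduction from $U(3)$ to $SU(3)$, and then to read off this obstruction from $d\Psi^{\pm}$ by a direct computation with the torsion.

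First I would invoke \cite[Theorem~10.1]{FI}: a metric connection preserving $(g,J)$ with totally skew-symmetric torsion exists if and only if $N$ is a $3$-form, and it is then unique with $T=JdF+N$ as in \eqref{cy2}. Hence in both directions the hypothesis that $N$ is totally skew-symmetric is exactly what makes the torsion connection $\sb$ available, and the remaining content of the ACYT condition is the reduction of the holonomy to $SU(3)$, namely $\sb\Psi^{+}=\sb\Psi^{-}=0$.

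The key observation is that this reduction is controlled by a single real $1$-form. Since $\sb J=0$, the connection preserves the type splitting, so $\sb_X\Psi$ is again of type $(3,0)$; as $\Lambda^{3,0}$ is a complex line, $\sb_X\Psi=\alpha(X)\,\Psi$ for a complex-valued $1$-form $\alpha$. Because $\sb g=0$ and $\Psi$ has constant norm, differentiating $|\Psi|^{2}$ yields $\mathrm{Re}\,\alpha=0$, so $\alpha=\sqrt{-1}\,\eta$ with $\eta$ real, and therefore
\begin{equation*}
\sb_X\Psi^{+}=-\eta(X)\,\Psi^{-},\qquad \sb_X\Psi^{-}=\eta(X)\,\Psi^{+}.
\end{equation*}
Thus $\sb\Psi^{+}=\sb\Psi^{-}=0$ if and only if $\eta=0$, so the theorem reduces to characterising the vanishing of $\eta$ through $d\Psi^{\pm}$.

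For this I would use the general formula relating $d$ to a connection with skew torsion, $d\omega=\mathrm{alt}(\sb\omega)+\mathcal{T}[\omega]$, where $\mathcal{T}[\omega]$ is the purely algebraic contraction of $T$ against $\omega$. Taking $\omega=\Psi^{\pm}$ and inserting the covariant derivatives above, the antisymmetrisation of $\sb\Psi^{\pm}$ contributes exactly $\mp\,\eta\wedge\Psi^{\mp}$, so that
\begin{equation*}
d\Psi^{+}=-\eta\wedge\Psi^{-}+\mathcal{T}[\Psi^{+}],\qquad
d\Psi^{-}=\eta\wedge\Psi^{+}+\mathcal{T}[\Psi^{-}].
\end{equation*}
The main obstacle is the evaluation of the structure-determined terms $\mathcal{T}[\Psi^{\pm}]$. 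Substituting $T=JdF+N$ and separating the Lee part of $T$ (governed by $\theta$ through \eqref{liff}) from its Nijenhuis part (the $(3,0)+(0,3)$ piece, for which $N=\frac14(N,\Psi^{+})\Psi^{+}+\frac14(N,\Psi^{-})\Psi^{-}$), a careful index computation with the $SU(3)$ identities \eqref{iden} and the Hodge relations \eqref{star} should give $\mathcal{T}[\Psi^{\pm}]=\theta\wedge\Psi^{\pm}-\frac14(N,\Psi^{\pm})\,{*}F$. Granting these universal identities, \eqref{cycon} holds precisely when $\eta\wedge\Psi^{\mp}=0$; since $*(\eta\wedge\Psi^{-})=\eta\lrcorner\Psi^{+}$ by \eqref{star} and $|\eta\lrcorner\Psi^{+}|^{2}$ is a positive multiple of $|\eta|^{2}$ by \eqref{iden}, this forces $\eta=0$, i.e. $\sb\Psi^{\pm}=0$, and the equivalence of (a) and (b) follows in both directions.

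It remains to record the torsion formula \eqref{torcy}, which is merely a rewriting of $T=JdF+N$ valid whenever $N$ is a $3$-form. Decomposing $dF=dF^{+}+dF^{-}$ and using $J\Psi^{+}=-\Psi^{-}$, $J\Psi^{-}=\Psi^{+}$ together with the Hodge identities \eqref{star}, the $dF^{+}$-contribution becomes $-{*}dF+{*}(\theta\wedge F)$, while the $(3,0)+(0,3)$-part reproduces $\frac14(N,\Psi^{+})\Psi^{+}+\frac14(N,\Psi^{-})\Psi^{-}$, giving \eqref{torcy}.
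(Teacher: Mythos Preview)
The paper does not supply its own proof of this statement; it is quoted from \cite[Theorem~4.1]{II} and used as input for the rest of the paper. There is therefore nothing in the present paper to compare against. Your outline is the natural route and is essentially the one taken in the original reference: reduce the $SU(3)$ holonomy condition to the vanishing of the single real $1$-form $\eta$ measuring $\sb_X\Psi=\sqrt{-1}\,\eta(X)\Psi$, and then extract $\eta$ from $d\Psi^{\pm}$ via the torsion correction to the exterior derivative.

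Two places deserve tightening. First, in the last paragraph your bookkeeping is off: the $(3,0)+(0,3)$ part of $T$ is $\tfrac14 N$, not $N=\lambda\Psi^{+}+\mu\Psi^{-}$, and $JdF^{+}$ gives $-{*}dF^{+}+{*}(\theta\wedge F)$, not $-{*}dF+{*}(\theta\wedge F)$. The extra $\tfrac34 N$ needed to reach \eqref{torcy} is supplied by $-{*}dF^{-}$, since ${*}dF^{-}=\tfrac34 N$; once this is inserted the formula follows. Second, the identity $\mathcal{T}[\Psi^{\pm}]=\theta\wedge\Psi^{\pm}-\tfrac14(N,\Psi^{\pm}){*}F$ implicitly asserts that the $\Lambda^{3}_{12}$-component of $T$ (the primitive $(1,2)+(2,1)$ part, which is neither Lee nor Nijenhuis) contributes nothing. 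This is true---by $SU(3)$-equivariance the map $\Lambda^{3}_{12}\to\Lambda^{4}$ obtained by pairing with $\Psi^{\pm}$ must vanish, as $\Lambda^{3}_{12}$ shares no irreducible summand with $\Lambda^{4}$---but it is the substantive step and should be stated rather than absorbed into ``a careful index computation''.
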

In dimension 6 the (3,0)+(0,3) form $N$ is expressed in terms of the 3-forms $\ps$ and $\sp$ 
as follows $$N=\lambda\ps+\mu\sp,$$
where  the functions $\lambda,\mu$  are defined by the  scalar products
\begin{equation}\label{lm}
\lambda =\frac14(N,\ps)=\frac1{24}N_{ijk}\ps_{ijk}, \qquad \mu=\frac14(N,\sp)=\frac1{24}N_{ijk}\sp_{ijk}.
\end{equation}
The Lee form $\theta$ and the functions $\lambda,\mu$  can be expressed in terms of the torsion $T$, the 2-form $F$ and  the 4-form $\Phi$ as follows
\begin{equation}\label{tit}
\theta_i=\frac12T_{sjk}F_{jk}F_{si}=\frac16T_{jkl}\Phi_{jkli}; \quad \theta_iF_{iq}=-\frac12T_{qjk}F_{jk}; \quad \lambda=\frac16T_{klm}\ps_{klm};\quad \mu=\frac16T_{klm}\sp_{klm}.
\end{equation}
We also need the next observations from \cite{IS1}
\begin{thrm}\cite[Theorem~4.4]{IS1}\label{thnnew}
Let $(M,g,J,\Psi)$ be a 6-dimensional compact  ACYT manifold.

Then  the  Nijenhuis tensor is parallel with respect to the torsion connection, $\sb N=0$, 
 $d\lambda=d\mu=0$. In particular,  the Nijenhuis tensor has constant norm $d||N||^2=0$.
\end{thrm}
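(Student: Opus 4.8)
Here is how I would try to prove the statement.

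The plan is to reduce everything to the constancy of $\lambda$ and $\mu$. Since the torsion connection $\sb$ preserves the entire $SU(3)$-structure, $\sb\Psi^+=\sb\Psi^-=0$, so from $N=\lambda\Psi^++\mu\Psi^-$ one obtains $\sb_XN=(X\lambda)\Psi^++(X\mu)\Psi^-$; because $\Psi^+$ and $\Psi^-$ are pointwise orthogonal and nowhere zero (see \eqref{iden}), this says that $\sb N=0$ is equivalent to $d\lambda=d\mu=0$, and then $\|N\|^2=4(\lambda^2+\mu^2)$ is automatically constant. Thus it suffices to prove $d\lambda=d\mu=0$.

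Next I would extract a pointwise formula for $d\lambda$ and $d\mu$ from the first Bianchi identity \eqref{1bi1}. By \eqref{tit}, with the inner product normalised as in \eqref{lm}, one has $\lambda=(T,\Psi^+)$ and $\mu=(T,\Psi^-)$, hence $d\lambda(V)=(\sb_VT,\Psi^+)$ and $d\mu(V)=(\sb_VT,\Psi^-)$, again using $\sb\Psi^\pm=0$. Now contract the arguments $X,Y,Z$ of \eqref{1bi1} against $\Psi^+$: by the cyclic symmetry of $\Psi^+$ the left-hand side collapses to a multiple of $R(V,e_p,e_q,e_r)\Psi^+_{pqr}$, and for fixed $V,e_p$ the two-form $R(V,e_p,\cdot,\cdot)$ lies in $\mathfrak{su}(3)=\Lambda^2_8$ since $\sb$ has holonomy in $SU(3)$, whereas $e_p\lrcorner\Psi^+\in\Lambda^2_6$; as $\Lambda^2_6\perp\Lambda^2_8$ the left-hand side vanishes. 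This gives $d\lambda(V)=-\tfrac12(V\lrcorner dT,\Psi^+)$ and, likewise, $d\mu(V)=-\tfrac12(V\lrcorner dT,\Psi^-)$. Dualising, $d\lambda$ and $d\mu$ are exactly the components of $*dT$ along the two-forms $V\lrcorner\Psi^\pm$; since these span $\Lambda^2_6$, proving $d\lambda=d\mu=0$ amounts to showing that the $\Lambda^2_6$-part of $*dT$ vanishes.

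To bring in compactness I would compute $dT$ from the explicit torsion \eqref{torcy} together with the structure equations \eqref{cycon} — equivalently $d\Psi^+=\theta\wedge\Psi^++\lambda\Phi$ and $d\Psi^-=\theta\wedge\Psi^-+\mu\Phi$, using $*F=-\Phi$ from \eqref{star} — and then integrate. Concretely, one writes $\int_M(\|d\lambda\|^2+\|d\mu\|^2)=\int_M(\lambda\,\Delta\lambda+\mu\,\Delta\mu)$, inserts $d\lambda=-\tfrac12(\,\cdot\,\lrcorner dT,\Psi^+)$, and integrates by parts, using the metricity of $\sb$, the relations $\sb\Psi^\pm=\sb F=0$, the identities $d(dT)=0$ and $d^2\Psi^\pm=0$ (the latter obtained by differentiating \eqref{cycon}), and the curvature relations \eqref{tsym}, \eqref{dh}. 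The terms that do not cancel outright should reassemble — via the algebraic Proposition~\ref{4-for}, which says that a $4$-form whose contraction by one vector lies in $\Lambda^3_{12}$ vanishes — into something manifestly $\Lambda^3_{12}$-valued, hence zero; Stokes' theorem then yields $\int_M(\|d\lambda\|^2+\|d\mu\|^2)=0$, so $\sb N=0$ and $d\|N\|^2=0$.

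The step I expect to be the real obstacle is this last one. In the nearly K\"ahler case $\sb T=\sb N=0$ holds pointwise for classical reasons, but on a general ACYT $6$-manifold neither $dT$ nor $\sb T$ is controlled pointwise, so the vanishing of $d\lambda$ and $d\mu$ cannot be detected locally: one genuinely needs the compact integration, combined with the $SU(3)$-representation decomposition of $\Lambda^2$ and $\Lambda^3$. Identifying the correct total-derivative term and the correct combination of the structure equations so that the leftover is forced to vanish by Proposition~\ref{4-for} is the technical heart of the argument; the two reduction steps above are routine once \eqref{cycon}, \eqref{tit}, \eqref{1bi1} and the $SU(3)$-decomposition are in hand.
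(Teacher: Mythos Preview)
This statement is not proved in the present paper; it is quoted from \cite[Theorem~4.4]{IS1} and used as input for the arguments that follow. There is therefore no proof here against which to compare your proposal.

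On the substance: your reductions in the first two steps are correct. The equivalence $\sb N=0\Longleftrightarrow d\lambda=d\mu=0$ is immediate from $N=\lambda\Psi^++\mu\Psi^-$ and $\sb\Psi^\pm=0$, and the identity $d\lambda(V)=-\tfrac12(V\lrcorner dT,\Psi^+)$ is exactly the computation \eqref{ricnew}, which the authors derive later (inside the proof of Theorem~\ref{dt0}) by the same holonomy argument you describe.

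The genuine gap is your third step. You do not carry out any of the integration by parts; you only assert that the remaining terms ``should reassemble'' into something $\Lambda^3_{12}$-valued and then vanish by Proposition~\ref{4-for}. But Proposition~\ref{4-for} is a pointwise algebraic statement about a single $4$-form --- it forces $A=0$ once every $X\lrcorner A$ lies in $\Lambda^3_{12}$ --- and it provides no mechanism for cancelling integral cross-terms or for identifying a divergence. To make the scheme work you would need an explicit formula for $\Delta\lambda$ (or for $\lambda\,\Delta\lambda+\mu\,\Delta\mu$) in terms of $\sb T$, $dT$, $\theta$ and the $SU(3)$-invariants, together with an identification of which pieces are exact; none of this is supplied. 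As written, the proposal is a reasonable strategy sketch whose routine parts are done and whose hard part is acknowledged but left open.
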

\begin{lemma}\cite[Lemma~4.3]{IS1}\label{lcom}
On a 6-dimensional ACYT manifold, the next identities hold
\begin{gather}\nonumber
 d\mu=Jd\lambda, \quad d\lambda=-Jd\mu,\\
\label{new3}
d\theta(X,Y)-d\theta(JX,JY)=-(d\mu\lrcorner\sp)(X,Y)
=-(d\lambda\lrcorner\ps)(X,Y).
\end{gather}
\end{lemma}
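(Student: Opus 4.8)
\medskip
\noindent\emph{Proof strategy (sketch).} The plan is to differentiate the structure equations \rf{cycon} and extract the identities by projecting onto $SU(3)$-irreducible components. First I would rewrite \rf{cycon} in a cleaner form: since $*F=-\Phi$ by \rf{star} and $(N,\Psi^+)=4\lambda$, $(N,\Psi^-)=4\mu$ by \rf{lm}, the two equations \rf{cycon} become
\begin{equation*}
d\Psi^+=\theta\wedge\Psi^++\lambda\,\Phi,\qquad d\Psi^-=\theta\wedge\Psi^-+\mu\,\Phi .
\end{equation*}
I would also record the elementary identity $d\Phi=\theta\wedge\Phi$, which holds on any almost Hermitian $6$-manifold: from $\Phi=\frac12F\wedge F$ one has $d\Phi=F\wedge dF$, and wedging with $F$ kills all of $dF$ except its $\Lambda^3_6$-part; using $\delta F=-*d*F$, $*F=-\Phi$ and the relations \rf{star} one finds that this part equals $\frac12\,\theta\wedge F$, whence $d\Phi=\frac12\,\theta\wedge F\wedge F=\theta\wedge\Phi$.

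Now I would apply the exterior derivative to the two equations just displayed. Because $d^2=0$, $\theta\wedge\theta=0$ and $d\Phi=\theta\wedge\Phi$, the terms proportional to $\theta\wedge\Phi$ cancel, leaving the clean identities
\begin{equation*}
d\theta\wedge\Psi^++d\lambda\wedge\Phi=0,\qquad d\theta\wedge\Psi^-+d\mu\wedge\Phi=0 .
\end{equation*}
Taking the Hodge star and using the relations in \rf{star} for $*(\beta\wedge\Psi^{\pm})$ with $\beta\in\Lambda^2$ and for $*(\alpha\wedge\Phi)=J\alpha$ with $\alpha\in\Lambda^1$, these become $Jd\lambda=-d\theta\lrcorner\Psi^-$ and $Jd\mu=d\theta\lrcorner\Psi^+$. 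Applying $J$ once more (so $J^2=-1$ on $1$-forms) together with the purely algebraic consequences of \rf{iden} — namely $J(\beta\lrcorner\Psi^+)=\beta\lrcorner\Psi^-$, $J(\beta\lrcorner\Psi^-)=-\beta\lrcorner\Psi^+$, and the fact that $\beta\lrcorner\Psi^{\pm}$ depends only on the $\Lambda^2_6$-component of $\beta$ — one also obtains $d\lambda=-d\theta\lrcorner\Psi^+$ and $d\mu=-d\theta\lrcorner\Psi^-$. Comparing these relations immediately yields $d\mu=Jd\lambda$ and $d\lambda=-Jd\mu$. For the remaining identity, write $(d\theta)_6\in\Lambda^2_6$ for the $J$-anti-invariant part of $d\theta$, so that $d\theta(X,Y)-d\theta(JX,JY)=2(d\theta)_6(X,Y)$ and $d\lambda=-(d\theta)_6\lrcorner\Psi^+$, $d\mu=-(d\theta)_6\lrcorner\Psi^-$; combining with the algebraic identity $(\beta\lrcorner\Psi^{\pm})\lrcorner\Psi^{\pm}=2\beta$ for $\beta\in\Lambda^2_6$ gives $-(d\lambda\lrcorner\Psi^+)=-(d\mu\lrcorner\Psi^-)=2(d\theta)_6$, which is exactly \rf{new3}.

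The computation itself is short; the work lies in the bookkeeping. One must fix mutually consistent sign conventions for $*$, for the interior product of a $2$-form into a $3$-form, and for the extension of $J$ to forms, and then check the few $SU(3)$-algebraic identities used above — the behaviour of $J$ on $\beta\lrcorner\Psi^{\pm}$ and the double contraction $(\beta\lrcorner\Psi^{\pm})\lrcorner\Psi^{\pm}=2\beta$ on $\Lambda^2_6$ — which can be verified directly on the explicit frame \rf{AA} and are forced up to a scalar by Schur's lemma, since $\Lambda^2_6\cong\Lambda^1$ is an irreducible real $SU(3)$-module. The one genuinely delicate point, and the crux of the proof, is the cancellation of the $\theta\wedge\Phi$-terms after differentiating the structure equations; it relies on $d\Phi=\theta\wedge\Phi$, and without that identity one would be left with extra Lee-form terms and the clean conclusion \rf{new3} would not hold.
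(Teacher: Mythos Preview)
Your argument is correct. The paper does not give its own proof of this lemma; it simply quotes the result from \cite[Lemma~4.3]{IS1}. Your approach---rewriting \eqref{cycon} as $d\Psi^{\pm}=\theta\wedge\Psi^{\pm}+\{\lambda,\mu\}\,\Phi$, differentiating, using $d\Phi=\theta\wedge\Phi$ to cancel the Lee-form terms, and then reading off the identities via the Hodge star relations \eqref{star} and the $SU(3)$-algebra \eqref{iden}---is exactly the natural route and almost certainly coincides with the argument in \cite{IS1}. The algebraic facts you invoke (that $J(\beta\lrcorner\Psi^+)=\beta\lrcorner\Psi^-$, that $\beta\lrcorner\Psi^{\pm}$ sees only the $\Lambda^2_6$-part of $\beta$, and that the composite $\Lambda^2_6\to\Lambda^1\to\Lambda^2_6$ given by contraction with $\Psi^+$ is twice the identity) are all easily checked on the frame \eqref{AA}, and the derivation of $d\Phi=\theta\wedge\Phi$ is equivalent to the definition $\delta F=J\theta$ together with $*F=-\Phi$.
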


It is also known from \cite{IS1} that if on ACYT 6-manifold  the Lee form is closed, $d\theta=0$, then the Nijenhuis tensor is $\sb-$parallel, $\sb N=0$ and if the  Lee form vanishes, $\theta=0$ then  $\sb N=\delta T=0$   and the Ricci tensor of $\sb$ is symmetric.

\section{Instanton conditions }
We begin this section with some general description of $\delta T$.
\subsection{Co-differential of the torsion}

We begin with the next
\begin{prop}\label{deltor}
On a 6-dimensional ACYT space the co-differential of the torsion is given by
\begin{equation}\label{tordel}
\delta T=*(d\theta\wedge F)-\theta\lrcorner T-2d\lambda\lrcorner\ps=-d\theta\lrcorner \Phi-\theta\lrcorner T-2d\lambda\lrcorner\ps.
\end{equation}
\end{prop}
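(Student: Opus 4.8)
The plan is to compute $\delta T=-*d*T$ (recall $\delta=-*d*$ on a $6$-manifold) directly from the explicit torsion formula of Theorem~\ref{cythm1}, $T=-*dF+*(\theta\wedge F)+\lambda\,\ps+\mu\,\sp$ (with $\lambda=\tfrac14(N,\ps)$, $\mu=\tfrac14(N,\sp)$ as in \eqref{lm}), combined with the structure equations \eqref{cycon}.

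First I would apply the Hodge star. Since $*^2=-1$ on $3$-forms in dimension $6$ and $*\ps=\sp$, $*\sp=-\ps$ by \eqref{star}, this gives $*T=dF-\theta\wedge F+\lambda\,\sp-\mu\,\ps$. Differentiating, using $d(dF)=0$, $d(\theta\wedge F)=d\theta\wedge F-\theta\wedge dF$, and then inserting \eqref{cycon} in the form $d\ps=\theta\wedge\ps+\lambda\,\Phi$, $d\sp=\theta\wedge\sp+\mu\,\Phi$ (which uses $*F=-\Phi$), I expect the two $\Phi$-contributions from $\lambda\,d\sp-\mu\,d\ps$ to cancel, so that $d*T=-d\theta\wedge F+\theta\wedge dF+d\lambda\wedge\sp-d\mu\wedge\ps+\lambda\,\theta\wedge\sp-\mu\,\theta\wedge\ps$. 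Applying $-*$ and the Hodge--interior duality $*(\alpha\wedge\omega)=(-1)^{\deg\omega}\,\alpha\lrcorner(*\omega)$ for $1$-forms $\alpha$ (of which the $\Lambda^1$-identities in \eqref{star} are special cases) then yields
\[
\delta T=*(d\theta\wedge F)-*(\theta\wedge dF)-d\lambda\lrcorner\ps-d\mu\lrcorner\sp-\lambda\,\theta\lrcorner\ps-\mu\,\theta\lrcorner\sp .
\]

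The remaining term $*(\theta\wedge dF)$ I would treat by feeding the torsion formula back in. Since $*dF=-T+*(\theta\wedge F)+\lambda\,\ps+\mu\,\sp$ and $*(\theta\wedge F)=-\theta\lrcorner\Phi$, so that $\theta\lrcorner*(\theta\wedge F)=-\iota_{\theta^\sharp}\iota_{\theta^\sharp}\Phi=0$, the duality identity gives $*(\theta\wedge dF)=-\theta\lrcorner(*dF)=\theta\lrcorner T-\lambda\,\theta\lrcorner\ps-\mu\,\theta\lrcorner\sp$. Substituting this, all of the $\theta\lrcorner\ps$ and $\theta\lrcorner\sp$ terms cancel and one is left with $\delta T=*(d\theta\wedge F)-\theta\lrcorner T-d\lambda\lrcorner\ps-d\mu\lrcorner\sp$. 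Finally Lemma~\ref{lcom} gives $d\mu=Jd\lambda$, and a short index computation with \eqref{iden} (essentially $\sp_{ijs}F_{sk}=\ps_{ijk}$) shows $(J\alpha)\lrcorner\sp=\alpha\lrcorner\ps$ for any $1$-form $\alpha$; hence $d\mu\lrcorner\sp=d\lambda\lrcorner\ps$ and the last two terms merge into $-2\,d\lambda\lrcorner\ps$, giving the first expression in \eqref{tordel}. The second expression follows from the $2$-form analogue $*(d\theta\wedge F)=-d\theta\lrcorner\Phi$ of the first $\Lambda^1$-identity of \eqref{star}, which can be verified on each summand of $\Lambda^2=\Lambda^2_1\oplus\Lambda^2_6\oplus\Lambda^2_8$.

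I expect the only genuine difficulty to be bookkeeping: tracking the numerous $\ps,\sp,\theta\lrcorner\ps,\theta\lrcorner\sp$ terms so that exactly the right ones cancel, and pinning down the coefficient $2$, which relies on the precise signs in \eqref{iden}, in the relation $(J\alpha)\lrcorner\sp=\alpha\lrcorner\ps$, and in the $\Phi$-cancellation produced by \eqref{cycon}. The one conceptual (as opposed to computational) step is to recognize that $*dF$ should be re-expressed through \eqref{torcy} in order to reabsorb $*(\theta\wedge dF)$ into $\theta\lrcorner T$, rather than trying to evaluate $*(\theta\wedge dF)$ in components.
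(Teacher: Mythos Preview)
Your proposal is correct and follows essentially the same route as the paper: compute $*T$ from \eqref{torcy}, differentiate using \eqref{cycon} (with the $\Phi$-terms cancelling), apply the Hodge star via the identities in \eqref{star}, re-express $*dF$ through \eqref{torcy} to absorb $*(\theta\wedge dF)$ into $\theta\lrcorner T$, and finally merge the $d\lambda\lrcorner\ps$ and $d\mu\lrcorner\sp$ terms via Lemma~\ref{lcom}. The only cosmetic difference is that you derive $d\mu\lrcorner\sp=d\lambda\lrcorner\ps$ from $d\mu=Jd\lambda$ and the index identity, whereas the paper simply cites \eqref{new3}, which already contains this equality.
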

\begin{proof}
We obtain from \eqref{torcy} 
\[*T=dF-\theta\wedge F+\lambda\sp-\mu\ps. \]
We calculate  using \eqref{cycon}  and \eqref{star} 
that
\[
\begin{split}d*T=-d\theta\wedge F+\theta\wedge dF +d\lambda\wedge\sp-d\mu\wedge\ps+\lambda\theta\wedge\sp-\mu\theta\wedge\ps;\\
-\delta T=-*d*T=-*(d\theta\wedge F)+*(\theta\wedge dF)+*(d\lambda\wedge\sp)-*(d\mu\wedge\ps)+\lambda*(\theta\wedge\sp)-\mu*(\theta\wedge\ps)\\
=-*(d\theta\wedge F)-\theta\lrcorner *dF+(d\lambda+\lambda\theta)\lrcorner \ps +(d\mu+\mu\theta)\lrcorner\sp.
\end{split}
\]
We express $*dF$ from \eqref{torcy} and use \eqref{new3} to write the above equality in the form
\[
\begin{split}
-\delta T=-*(d\theta\wedge F)+\theta\lrcorner(T-\theta\lrcorner *F-\lambda\ps-\mu\sp)+(d\lambda+\lambda\theta)\lrcorner \ps +(d\mu+\mu\theta)\lrcorner\sp.
\end{split}
\]
Hence,  \eqref{tordel} follows.
\end{proof}
\begin{prop}\label{suin2}
On a 6-dimensional ACYT space with constant norm of the Nijenhuis tensor the 2-form $d\theta\in\Lambda^2_8\cong \frak{su}(3)$.
\end{prop}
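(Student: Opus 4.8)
The plan is to show that $d\theta$ is of type $(1,1)$ and trace-free, i.e. $Jd\theta=d\theta$ and $(d\theta,F)=0$. Since, by the $SU(3)$--decomposition recalled above, $\Lambda^2_8=\{\phi\in\Lambda^2 : J\phi=\phi,\ \phi\wedge F^2=0\}$ and $\phi\wedge F^2=0$ is equivalent to $(\phi,F)=0$ for any $2$-form (because $F^2=2\Phi=-2*F$), this is exactly the assertion $d\theta\in\Lambda^2_8\cong\frak{su}(3)$.

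First I would use the constant-norm hypothesis to deduce $d\lambda=d\mu=0$. From $N=\lambda\ps+\mu\sp$ and the relations $(\ps,\ps)=(\sp,\sp)=4$, $(\ps,\sp)=0$ (which follow from \eqref{iden}) one gets $||N||^2=4(\lambda^2+\mu^2)$, so the hypothesis gives $\lambda\,d\lambda+\mu\,d\mu=0$. Substituting $d\mu=Jd\lambda$ from Lemma~\ref{lcom} and then applying $J$ (recall $J^2=-1$ on $1$-forms) produces the two identities $\lambda\,d\lambda+\mu\,Jd\lambda=0$ and $-\mu\,d\lambda+\lambda\,Jd\lambda=0$; eliminating $Jd\lambda$ yields $(\lambda^2+\mu^2)\,d\lambda=0$. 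At any point where $\lambda^2+\mu^2\neq 0$ this forces $d\lambda=0$, hence also $d\mu=Jd\lambda=0$; and if $\lambda^2+\mu^2$ vanishes identically then $\lambda\equiv\mu\equiv 0$ and the conclusion is immediate. Feeding $d\lambda=0$ into \eqref{new3} gives $d\theta(X,Y)-d\theta(JX,JY)=-(d\lambda\lrcorner\ps)(X,Y)=0$, that is $Jd\theta=d\theta$, so the $\Lambda^2_6$-component of $d\theta$ is zero.

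It remains to prove $(d\theta,F)=0$, equivalently $d\theta\wedge\Phi=0$. From \eqref{star} one has $\theta\wedge\Phi=-*(J\theta)$, and since the $1$-form $J\theta$ is co-closed (as recalled in the Preliminaries) this $5$-form is closed; hence, using $d\Phi=F\wedge dF$,
$$d\theta\wedge\Phi=d(\theta\wedge\Phi)+\theta\wedge d\Phi=\theta\wedge F\wedge dF.$$
On the other hand $\delta F=J\theta$ (immediate from \eqref{cy1}) and, since $*F=-\Phi$, the sign convention for $\delta$ gives $\delta F=*(F\wedge dF)$; applying $*$ yields $F\wedge dF=-*(J\theta)$, and therefore $\theta\wedge F\wedge dF=-\theta\wedge *(J\theta)=0$, because $(\theta,J\theta)=0$ ($J$ being skew-symmetric on $1$-forms). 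Thus $d\theta\wedge\Phi=0$, so $(d\theta,F)=0$, and together with the previous step this shows $d\theta\in\Lambda^2_8\cong\frak{su}(3)$.

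The main obstacle is purely clerical: correctly tracking the Hodge-star and co-differential signs in $\theta\wedge\Phi=-*(J\theta)$ and in $\delta F=J\theta=*(F\wedge dF)$ under the conventions $\delta=(-1)^{np+n+1}*d*$ and $*^2=(-1)^{p(n-p)}$ with $n=6$; a sign error there would break the chain. The only genuinely non-formal point is the elimination step $(\lambda^2+\mu^2)\,d\lambda=0$, which upgrades ``$||N||$ constant'' to ``$\lambda$ and $\mu$ constant'' and so makes the $J$-invariance of $d\theta$ accessible via Lemma~\ref{lcom}.
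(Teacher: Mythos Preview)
Your proof is correct. The $(1,1)$-step matches the paper's: both invoke \eqref{new3} once $d\lambda=0$ is known, though you supply the elimination argument showing that constant $||N||^2=4(\lambda^2+\mu^2)$ together with $d\mu=Jd\lambda$ forces $d\lambda=d\mu=0$, a step the paper simply asserts. For the trace-free step the paper takes a different, connection-theoretic path: it writes $d\theta=d^{\sb}\theta+\theta\lrcorner T$ and contracts each summand with $F$ separately, using $\delta(J\theta)=\sb_i\theta_j F_{ij}=0$ for the first and the Lee-form identity \eqref{liff} to get $\theta_s T_{sij}F_{ij}=-2\theta(J\theta)=0$ for the second. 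Your exterior-calculus route via $d\theta\wedge\Phi=\theta\wedge F\wedge dF=-\theta\wedge*(J\theta)=0$ is a clean alternative that avoids the torsion connection entirely and works purely from the $*$-identities \eqref{star} and the co-closedness of $J\theta$; the price is the Hodge-star bookkeeping, which you tracked correctly under the paper's conventions.
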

\begin{proof}
Since $d\lambda=d\mu=0$, we have from \eqref{new3} that the 2-form $d\theta$ is of type (1,1) with respect to the almost complex structure $J$.  We have to show $d\theta\lrcorner F=0$.

Indeed, we have
\begin{equation}\label{ins7}
d^{\sb}\theta_{ij}F_{ij}=2\sb_i\theta_j F_{ij}=0
\end{equation}
since $\delta J\theta=\sb_i\theta_jF_{ij}=\d^2 F=0$.

It follows from \eqref{tsym} that $d\theta=d^{\sb}\theta+\theta\lrcorner T$. The definition of the Lee form, \eqref{liff} and \eqref{ins7} yield
\[d\theta_{ij}F_{ij}=d^{\sb}\theta_{ij}F_{ij}+\theta_sT_{sij}F_{ij}= \theta(J\theta)=0.\]
\end{proof}
\begin{prop}\label{suin1}
On a 6-dimensional ACYT space with constant norm of the Nijenhuis tensor we have 
\begin{equation}\label{delT}
\delta T=d^{\sb}\theta.
\end{equation}
\end{prop}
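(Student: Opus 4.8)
The plan is to read off $\delta T$ from Proposition~\ref{deltor}, simplify the right-hand side using that $d\theta$ lies in $\Lambda^2_8$ (Proposition~\ref{suin2}) and that $d\lambda=d\mu=0$, and then convert $d\theta$ into $d^{\sb}\theta$ via \eqref{tsym}.

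First I would start from \eqref{tordel}, i.e. $\delta T=-d\theta\lrcorner\Phi-\theta\lrcorner T-2d\lambda\lrcorner\ps$. The constant norm hypothesis forces $d\lambda=d\mu=0$: since $||N||^2$ is a constant multiple of $\lambda^2+\mu^2$ (by \eqref{lm} and \eqref{iden}), differentiating gives $\lambda\,d\lambda+\mu\,d\mu=0$, and combining with $d\mu=Jd\lambda$ from Lemma~\ref{lcom} yields $(\lambda^2+\mu^2)\,d\lambda=0$ pointwise, whence $d\lambda=d\mu=0$. Thus the last term of \eqref{tordel} drops out and $\delta T=-d\theta\lrcorner\Phi-\theta\lrcorner T$.

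Next, by Proposition~\ref{suin2} we have $d\theta\in\Lambda^2_8$. For any $\phi\in\Lambda^2_8$ the description of this summand recalled above (together with the Hodge relations \eqref{star}, in particular $*F=-\Phi$) gives $*(\phi\wedge F)=\phi$, equivalently $\phi\lrcorner\Phi=-\phi$. Applying this with $\phi=d\theta$ we get $-d\theta\lrcorner\Phi=d\theta$, so $\delta T=d\theta-\theta\lrcorner T$. Finally \eqref{tsym}, applied to the $1$-form $\theta$, yields $d\theta=d^{\sb}\theta+\theta\lrcorner T$; substituting this, the two occurrences of $\theta\lrcorner T$ cancel and we are left with $\delta T=d^{\sb}\theta$.

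The only step that is not routine bookkeeping is the identity $d\theta\lrcorner\Phi=-d\theta$ valid for $d\theta\in\Lambda^2_8$; this follows directly from the $SU(3)$-decomposition of $\Lambda^2$ and the relations \eqref{star}, but one must be careful to use exactly the sign conventions for $\lrcorner$ and $*$ fixed earlier (so that $*(\phi\wedge F)=\phi$ holds with a plus sign for $\phi\in\Lambda^2_8$). Everything else is immediate from Propositions~\ref{deltor} and \ref{suin2}.
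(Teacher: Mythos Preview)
Your proof is correct and follows essentially the same route as the paper: start from \eqref{tordel}, drop the $d\lambda$-term, use Proposition~\ref{suin2} to convert $-d\theta\lrcorner\Phi$ into $d\theta$, and then pass from $d\theta$ to $d^{\sb}\theta$ via $d\theta=d^{\sb}\theta+\theta\lrcorner T$. The only point worth tightening is the implication ``$\lambda\,d\lambda+\mu\,Jd\lambda=0 \Rightarrow (\lambda^2+\mu^2)\,d\lambda=0$'': this is not a direct substitution but follows because $d\lambda$ and $Jd\lambda$ are pointwise orthogonal with equal norm (so the equation forces $\lambda=\mu=0$ wherever $d\lambda\neq 0$, and constancy of $\lambda^2+\mu^2$ then finishes it).
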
 
\begin{proof}
By Proposition~\ref{suin2} we know $d\theta\in\Lambda^2_8\cong \frak{su}(3)$ which combined with  $d\lambda=0$ help us to write  \eqref{tordel} in the form $\delta T=d\theta-\theta\lrcorner T=d^{\sb}\theta$.
\end{proof}

\subsection{$SU(3)$--Instanton}
 Since the torsion connection $\sb$ preserves the $SU(3)$-structure its curvature  lies in the Lie algebra $\frak{su}(3)$, i.e. it satisfies
\begin{equation}\label{rr}
\begin{split}
R_{ijab}\ps_{abc}=R_{ijab}\sp_{abc}=R_{ijab}F_{ab}=0 \Longleftrightarrow R_{ijab}\Phi_{abkl}=-2R_{ijkl}.
\end{split}
\end{equation}
The  $SU(3)$--instanton condition means that the curvature 2-form of $\sb$ lies in the Lie algebra $\frak{su}(3)$. That is it is of type (1,1) with zero trace, 
\begin{equation}\label{instt2}
R(JX,JY,Z,V)=R(X,Y,Z,V),\qquad R(e_i,Je_i,Z,V)=0.
\end{equation}
In other words, the $SU(3)$--instanton condition  can be written in the form
\begin{equation}\label{rr1}
\begin{split}
R_{abij}\ps_{abc}=R_{abij}\sp_{abc}=R_{abij}F_{ab}=0 \Longleftrightarrow R_{abij}\Phi_{abkl}=-2R_{klij}.
\end{split}
\end{equation}
\begin{lemma}\label{insu3}
On a 6 dimensional ACYT space with  constant norm of the Nijenhuis tensor if the  curvature of the torsion connection $\sb$   is an $SU(3)$-instanton then  $\delta T=d^{\sb}\theta\in\Lambda^2_{8}\cong  \frak{su}(3)$.
\end{lemma}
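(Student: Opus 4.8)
The plan is to combine the first Bianchi identity \eqref{1bi1} for the torsion connection with the $SU(3)$-instanton hypothesis \eqref{rr1} to show that the $2$-form $\delta T$ lies in $\Lambda^2_8$, and then invoke Proposition~\ref{suin1} to identify $\delta T$ with $d^{\sb}\theta$. First I would recall from \eqref{rics} that on an ACYT space the Ricci tensor of $\sb$ satisfies $\mathrm{Ric}(X,Y)-\mathrm{Ric}(Y,X)=-(\delta T)(X,Y)$, and from the $SU(3)$-instanton condition \eqref{instt2} that $R(e_i,Je_i,Z,V)=0$, so that contracting the curvature in its \emph{first} pair of slots and in its \emph{second} pair of slots gives two a priori different Ricci-type contractions whose difference, via the Bianchi identities, produces $\delta T$. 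Concretely, I would trace identity \eqref{1bi1}, namely
\[
R(V,X,Y,Z)+R(V,Y,Z,X)+R(V,Z,X,Y)=-\tfrac12 dT(X,Y,Z,V)+(\sb_VT)(X,Y,Z),
\]
against $F$ in the appropriate pair of indices. Since the instanton condition forces $R_{abij}F_{ab}=0$ (the trace on the first two slots vanishes), while the $SU(3)$-structure of the connection gives $R_{ijab}F_{ab}=0$ (the trace on the last two slots vanishes), the contractions of the curvature terms collapse, leaving a linear relation among $\delta T$, a contraction of $dT$ with $F$, and a contraction of $\sb T$ with $F$.

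Next I would handle the term $(\sb_VT)(X,Y,Z)$ contracted with $F$. Using $\sb F=0$ one rewrites $\sb_i T_{jkl}F_{kl}=\sb_i(T_{jkl}F_{kl})$, and by \eqref{liff} the quantity $T_{jkl}F_{kl}$ is (up to sign and the action of $J$) the Lee form $\theta$; hence this term is controlled by $\sb\theta$, i.e. by $d^{\sb}\theta$. Likewise the $dT$-term contracted with $F$ is governed by $dT(X,Y,e_i,Je_i)$, which on an ACYT space is controlled through the $SU(n)$-holonomy characterization $\mathrm{Ric}(X,Y)+(\sb_X\theta)(Y)-\tfrac14 dT(X,JY,e_i,Je_i)=0$ recalled in the Preliminaries. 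Assembling these pieces should yield precisely $\delta T = d^{\sb}\theta$ together with the statement that this $2$-form lies in $\Lambda^2_8$: the $(1,1)$-part because $d^{\sb}\theta$ is of type $(1,1)$ under $J$ when $\|N\|$ is constant (this is already established in the proof of Proposition~\ref{suin2}, using \eqref{new3} and $d\lambda=d\mu=0$), and the trace-free part because the instanton condition kills $R_{abij}F_{ab}$, which is exactly the obstruction to $\delta T \lrcorner F = 0$.

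Actually, the cleanest route is shorter: by Proposition~\ref{suin1} we \emph{already} know $\delta T = d^{\sb}\theta$ on any ACYT $6$-manifold with $\|N\|$ constant, and by Proposition~\ref{suin2} we \emph{already} know $d\theta \in \Lambda^2_8$. So what genuinely needs proof is that $d^{\sb}\theta$ itself lies in $\Lambda^2_8$, i.e. that the correction term $\theta \lrcorner T$ relating $d\theta$ and $d^{\sb}\theta$ (from \eqref{tsym}, $d\theta = d^{\sb}\theta + \theta\lrcorner T$) does not spoil membership in $\mathfrak{su}(3)$. The type-$(1,1)$ condition: I would show $(\theta\lrcorner T)(JX,JY) = (\theta\lrcorner T)(X,Y)$ using that $T$ is of type $(1,2)+(2,1)+(3,0)+(0,3)$ by \eqref{cy2}–\eqref{12} and that contracting the $(3,0)+(0,3)$ part with a $1$-form lands in $\Lambda^2_6$, whose contribution must be shown to cancel — this is where the instanton hypothesis enters, forcing $R_{abij}\ps_{abc}=R_{abij}\sp_{abc}=0$ and hence, via the Bianchi identity traced against $\ps,\sp$, that the $(3,0)+(0,3)$-type obstruction $\sb\lambda, \sb\mu$-terms vanish (consistent with $d\lambda=d\mu=0$). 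The trace condition $(\theta\lrcorner T)\lrcorner F = 0$ is immediate from \eqref{liff}: $(\theta\lrcorner T)_{ij}F_{ij} = \theta_s T_{sij}F_{ij} = 2\,\theta(J\theta) = 0$, exactly as in Proposition~\ref{suin2}. The main obstacle I anticipate is bookkeeping the index contractions in the Bianchi identity \eqref{1bi1} against both $F$ and $\ps,\sp$ carefully enough to see that the curvature terms vanish under the instanton hypothesis while the torsion-derivative terms reorganize into $d^{\sb}\theta$; the $SU(3)$ representation-theoretic identities \eqref{iden} will be the workhorse throughout.
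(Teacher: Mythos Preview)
Your ``cleaner route'' has a circularity problem. You propose to deduce $d^{\sb}\theta\in\Lambda^2_8$ by combining Proposition~\ref{suin2} ($d\theta\in\Lambda^2_8$) with a direct verification that $\theta\lrcorner T\in\Lambda^2_8$. But the $(1,1)$-ness of $\theta\lrcorner T$ is precisely Corollary~\ref{insu4}, which the paper derives \emph{from} this lemma, not as an input to it. Your sketch for proving it directly does not go through: the expression you must kill is $T(\theta,X,Y)-T(\theta,JX,JY)=-2C(\theta;X,Y)+\tfrac12 N(\theta,X,Y)$ (using \eqref{cht} and that $N$ is $(3,0)+(0,3)$), and the vanishing of this is exactly \eqref{tetor}. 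Your proposed mechanism---tracing the instanton condition against $\ps,\sp$ via the Bianchi identity to force ``$\sb\lambda,\sb\mu$-terms'' to vanish---is empty here, since $d\lambda=d\mu=0$ is already part of the constant-norm hypothesis and says nothing about the $(2,0)+(0,2)$-component of $\theta\lrcorner T^+$.

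The paper proves directly that $d^{\sb}\theta$ is of type $(1,1)$, and the instanton hypothesis enters in a different place than you anticipated. The crucial identity is the pair-symmetry relation \eqref{inst1},
\[
2R(X,Y,Z,V)-2R(Z,V,X,Y)=(\sb_XT)(Y,Z,V)-(\sb_YT)(X,Z,V)-(\sb_ZT)(X,Y,V)+(\sb_VT)(X,Y,Z),
\]
which is evaluated with $Y\to JY$, $Z\to e_i$, $V\to Je_i$: then \emph{both} curvature terms on the left vanish by \eqref{rr} and \eqref{rr1} simultaneously, producing \eqref{inst2}. This is combined with a $\Phi$-contraction of the Bianchi identities \eqref{1bi} and \eqref{1bi1} to express the skew part of $Ric$ (your first approach traces only against $F$, which does not collapse the curvature terms---the instanton condition is $R_{abij}F_{ab}=0$, not $R_{iajb}F_{ab}=0$, so terms like $R(V,e_i,Je_i,X)$ survive). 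The resulting chain \eqref{ins4}--\eqref{ins6} yields $(\sb_Y\theta)X=(\sb_{JY}\theta)JX$, i.e.\ $d^{\sb}\theta$ is $(1,1)$; the trace condition $d^{\sb}\theta\lrcorner F=0$ then follows from \eqref{ins7} as you correctly noted.
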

\begin{proof}
We express the Ricci tensor in two ways using \eqref{rr} ,\eqref{1bi1}, \eqref{1bi}, \eqref{tit} and \eqref{dh}  as follows:
\begin{multline}\label{ricdt}
\begin{split}
Ric_{ij}=\frac12R_{iabc}\Phi_{jabc}=\frac16\Big[R_{iabc}+R_{ibca}+R_{icab} \Big]\Phi_{jabc}=\frac1{12}dT_{iabc}\Phi_{jabc}+\frac16\sb_iT_{abc}\Phi_{jabc}\\
=\frac1{12}dT_{iabc}\Phi_{jabc}-\sb_i\theta_j;\\
-Ric_{ji}=\frac12R_{abci}\Phi_{jabc}=\frac16\Big[ R_{abci}+R_{bcai}+R_{cabi} \Big]\Phi_{jabc}=\frac16\Big[dT_{abci}-\sigma^T_{abci}+\sb_iT_{abc}\Big]\Phi_{jabc}\\
=-\frac16dT_{iabc}\Phi_{jabc}+\frac16\sigma^T_{iabc}\Phi_{jabc}-\sb_i\theta_j.
\end{split}
\end{multline}
Taking the sum of the two equations in \eqref{ricdt}  and  using \eqref{rics} and \eqref{delT}, yields
\begin{equation}\label{ins4}
\begin{split}
-d^{\sb}\theta_{ij}=-\delta T_{ij}=Ric_{ij}-Ric_{ji}=\frac16\Big[-\frac12dT_{iabc}+\sigma^T_{iabc}\Big]\Phi_{jabc}-2\sb_i\theta_j\\
=-\frac1{12}d^{\sb}T_{abci}\Phi_{abcj}-2\sb_i\theta_j=-\frac14\sb_aT_{bci}\Phi_{abcj}-\frac32\sb_i\theta_j.
\end{split}
\end{equation}
With the help of \eqref{iden} and \eqref{liff}, we calculate
\begin{equation}\label{ins5}
\begin{split}
\sb_aT_{bci}\Phi_{abcj}=\sb_aT_{bci}\Big[F_{ab}F_{cj}+F_{bc}F_{aj}+F_{ca}F_{bj}\Big]\\
=(\sb_{Je_b}T)(e_b,Je_j,e_i)-(\sb_{Je_c}T)(Je_j,e_c,e_i)+(\sb_{Je_j}T)(Je_c,e_c,e_i)\\=2(\sb_{e_b}T)(Je_b,e_i,Je_j)-2(\sb_{Je_j}\theta)(Je_i).
\end{split}
\end{equation}
It is known due to \cite[(3.38)]{I} that 
\begin{multline}\label{inst1}
2R(X,Y,Z,V)-2R(Z,V,X,Y)\\=(\sb_XT)(Y,Z,V)-(\sb_YT)(X,Z,V)-(\sb_ZT)(X,Y,V)+(\sb_VT)(X,Y,Z).
\end{multline}
We apply the  $SU(3)$--instanton condition  to \eqref{inst1}. 
The equalities \eqref{rr} and \eqref{rr1} imply with the help of \eqref{liff}, \eqref{tit} and the first identity in \eqref{iden} that
\begin{multline}\label{inst2}
0=2R(X,JY,e_i,Je_i)-2R(e_i,Je_i,X,JY)\\=(\sb_XT)(JY,e_i,Je_i)-(\sb_{JY}T)(X,e_i,Je_i)-(\sb_{e_i}T)(Je_i,X,JY)+(\sb_{Je_i}T)(e_i,X,JY)\\
=-2(\sb_X\theta)Y-2(\sb_JY\theta)JX-2(\sb_{e_i}T)(Je_i,X,JY).
\end{multline}
Combining \eqref{ins5}, \eqref{inst2} and \eqref{ins4}, we obtain
\begin{equation}\label{ins6}
\begin{split}
-d^{\sb}\theta(X,Y)=-(\sb_X\theta)Y+(\sb_Y\theta)X=-\frac32(\sb_X\theta)Y+\frac12(\sb_{JY}\theta)JX-\frac12(\sb_{e_i}T)(Je_i,X,JY)\\=
-\frac32(\sb_X\theta)Y+\frac12(\sb_{JY}\theta)JX+\frac12(\sb_X\theta)Y+\frac12(\sb_{JY}\theta)JX.
\end{split}
\end{equation}
Hence, \eqref{ins6} yields $(\sb_Y\theta)X=(\sb_{JY}\theta)JX$ and $d^{\sb}\theta(Y,X)=d^{\sb}\theta(JY,JX)$ which combined with \eqref{ins7} completes the proof.
\end{proof}
Proposition~\ref{suin1} and Lemma~\ref{insu3} imply
\begin{cor}\label{insu4}
On a 6 dimensional ACYT space with  constant norm of the Nijenhuis tensor.  If the  curvature of the torsion connection $\sb$   is an $SU(3)$-instanton then  $\theta\lrcorner T\in\Lambda^2_{8}\cong  \frak{su}(3)$.

In particular, 
\begin{equation}\label{tetor}
T(\theta,X,Y)=T(\theta,JX,JY).
\end{equation}
\end{cor}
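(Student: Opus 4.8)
The plan is to exhibit $\theta\lrcorner T$ as a difference of two $2$-forms that are \emph{already} known to lie in $\Lambda^2_8\cong\frak{su}(3)$, and then to read off \eqref{tetor} as the $(1,1)$-part of that membership. The essential analytic content has been packaged into the results just proved, so the corollary itself should reduce to a linear-algebraic observation.

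First I would split $d\theta$ using \eqref{tsym}. Since $\sb^g=\sb-\frac12 T$, antisymmetrising $\sb^g\theta$ gives the identity $d\theta = d^{\sb}\theta + \theta\lrcorner T$ that was already employed in the proof of Proposition~\ref{suin2}. Rearranging it isolates the form of interest,
\begin{equation*}
\theta\lrcorner T = d\theta - d^{\sb}\theta ,
\end{equation*}
expressing $\theta\lrcorner T$ purely through $d\theta$ and $d^{\sb}\theta=\delta T$.

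Next I would observe that each summand lies in $\Lambda^2_8$. Proposition~\ref{suin2} gives $d\theta\in\Lambda^2_8$ under the constant-norm hypothesis (via $d\lambda=d\mu=0$), while Lemma~\ref{insu3}, which is precisely where the $SU(3)$-instanton assumption is consumed, gives $d^{\sb}\theta=\delta T\in\Lambda^2_8$. Because $\Lambda^2_8$ is a linear subspace of $\Lambda^2$ (it is cut out by the linear conditions $J\phi=\phi$ and $\phi\wedge F^2=0$), it is closed under subtraction, and hence $\theta\lrcorner T = d\theta - d^{\sb}\theta\in\Lambda^2_8\cong\frak{su}(3)$, which is the first assertion.

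Finally, \eqref{tetor} is the $(1,1)$-component of this membership: every $\phi\in\Lambda^2_8$ satisfies $J\phi=\phi$, i.e. $\phi(JX,JY)=\phi(X,Y)$; applying this to $\phi=\theta\lrcorner T$, for which $\phi(X,Y)=T(\theta,X,Y)$, yields $T(\theta,JX,JY)=T(\theta,X,Y)$. I do not expect a genuine obstacle here, since all the curvature/Bianchi-identity work was done in Lemma~\ref{insu3}; the only point worth checking carefully is that the constant-norm hypothesis is exactly what legitimises \emph{both} Proposition~\ref{suin2} and Lemma~\ref{insu3}, so that both summands in the displayed identity truly land in the same subspace $\Lambda^2_8$.
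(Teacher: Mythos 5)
Your proof is correct and follows essentially the same route as the paper: the paper's own (one-line) proof combines Proposition~\ref{suin1} (whose proof rests on the identity $\delta T=d\theta-\theta\lrcorner T=d^{\sb}\theta$) with Lemma~\ref{insu3}, which is precisely your decomposition $\theta\lrcorner T=d\theta-d^{\sb}\theta$ with $d\theta\in\Lambda^2_8$ from Proposition~\ref{suin2} and $d^{\sb}\theta=\delta T\in\Lambda^2_8$ from Lemma~\ref{insu3}. Reading off \eqref{tetor} from the $(1,1)$-condition $J\phi=\phi$ is likewise exactly what the paper intends.
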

\begin{thrm}\label{dt0}
Let $(M,g,J,\Psi)$ be a 6-dimensional  ACYT manifold with  constant norm of the Nijenhuis tensor  and the  curvature of the torsion connection $\sb$   is an $SU(3)$-instanton. 

If the Lee form is $\sb$-parallel then $d^{\sb}T=0$.
\end{thrm}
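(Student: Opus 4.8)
The plan is to bootstrap from Corollary~\ref{insu4} and the identity \eqref{inst1} to show that the $3$-form $\sb_\bullet T$, viewed appropriately, forces itself into a component on which it must vanish. First I would record the consequences already extracted: since $\theta$ is $\sb$-parallel we have $d^{\sb}\theta=0$, hence by Proposition~\ref{suin1} that $\delta T=0$, and by the last displayed line in the proof of Lemma~\ref{insu3} that $(\sb_{e_i}T)(Je_i,X,JY)=0$, i.e. the contraction $\sb_{e_i}T(Je_i,\cdot,\cdot)$ of the covariant derivative of the torsion with $F$ vanishes. Moreover \eqref{tsym} with $\sb\theta=0$ gives $d\theta=\theta\lrcorner T$, so Proposition~\ref{suin2} says $\theta\lrcorner T\in\Lambda^2_8$, and \eqref{1bi1} together with $d^{\sb}T=dT-2\sigma^T$ and the $SU(3)$-instanton relations becomes the main structural input.

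Next I would use the instanton symmetry to promote \eqref{inst1} into a clean statement about $\sb T$. Applying $R(X,Y,Z,V)=R(Z,V,X,Y)$ (which holds for an instanton, being of type $(1,1)$ with the metric pairing symmetric in the two $\Lambda^2$ slots — this is \eqref{r4}) to \eqref{inst1} yields
\begin{equation*}
(\sb_XT)(Y,Z,V)-(\sb_YT)(X,Z,V)-(\sb_ZT)(X,Y,V)+(\sb_VT)(X,Y,Z)=0,
\end{equation*}
which by Lemma~\ref{4form} is equivalent to $\sb T$ being a $4$-form and to $dT=4\LC T$; combined with \eqref{tsym}, $\LC T=\sb T+\tfrac12\sigma^T$, this gives $d^{\sb}T=\sb T$ as a genuine $4$-form and $dT=d^{\sb}T+2\sigma^T=4(d^{\sb}T+\tfrac12\sigma^T)$... wait, more carefully: $dT=d^{\sb}T+2\sigma^T$ always (by \eqref{dh}), and $dT=4\LC T=4\sb T+2\sigma^T$, so $d^{\sb}T=4\sb T$ — but $\sb T$ is itself the $4$-form $d^{\sb}T/4$ only once we know alternation, and then $d^{\sb}T=\sb T$ by definition of $d^{\sb}$ on a $4$-form being $4$ times the alternation... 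I would sort this bookkeeping out so that the upshot is: $\Xi:=\sb T$ is a $4$-form. Then for the vector $\theta$, the $3$-form $\theta\lrcorner\Xi=\sb_\theta T$ vanishes because $\theta$ is $\sb$-parallel and $\sb$ preserves the $SU(3)$-structure, so... no: $\sb_\theta T=0$ is false in general; rather $\nabla_X(\theta\lrcorner T)=\theta\lrcorner\nabla_X T$ since $\sb\theta=0$, which is the useful identity. I would instead contract $\Xi$ with $F$: from the vanishing contraction $(\sb_{e_i}T)(Je_i,\cdot,\cdot)=0$ noted above, the $2$-form obtained by contracting the $4$-form $\Xi$ twice with $F$ is controlled, and I would show $\Xi\in\Lambda^4$ has its image under $X\mapsto X\lrcorner\Xi$ landing in $\Lambda^3_{12}$, after which Proposition~\ref{4-for} forces $\Xi=0$.

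The real work, and the main obstacle, is precisely showing that $X\lrcorner\sb T\in\Lambda^3_{12}$ for every $X$, i.e. that $(X\lrcorner\sb T)\wedge F=(X\lrcorner\sb T)\wedge\Psi^\pm=0$. The wedge-with-$F$ condition is equivalent to the double $F$-contraction of $\sb T$ vanishing, which should follow from combining $d^{\sb}\theta=0$, $\delta T=0$, the relation $(\sb_{e_i}T)(Je_i,X,JY)=0$, and the $SU(3)$-curvature identities \eqref{rr}, \eqref{rr1} fed through \eqref{1bi} and \eqref{1bi1} — essentially differentiating the instanton condition once more. The wedge-with-$\Psi^\pm$ conditions are equivalent to the contractions $\sb_iT_{jkl}\Psi^\pm_{jkl}$-type traces vanishing; since $\lambda=\tfrac16 T_{klm}\ps_{klm}$ and $\mu=\tfrac16 T_{klm}\sp_{klm}$ by \eqref{tit}, and $\sb$ preserves $\ps,\sp$, these traces are $6\,\sb_i\lambda$ and $6\,\sb_i\mu$, which vanish because constant norm of $N$ gives $d\lambda=d\mu=0$ by Theorem~\ref{thnnew} (or directly on the balanced/compact hypotheses feeding Theorem~\ref{main}). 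Assembling these, $X\lrcorner\sb T\in\Lambda^3_{12}$, and Proposition~\ref{4-for} gives $\sb T=0$, hence $d^{\sb}T=0$. I expect the delicate point to be the $F\wedge$-vanishing: it requires using the full strength of the instanton condition (both that $R$ is of type $(1,1)$ and that it is trace-free) rather than just one of the two, and keeping the index gymnastics between \eqref{ins4}, \eqref{ins5} and \eqref{inst2} consistent when $\sb\theta=0$ collapses several terms.
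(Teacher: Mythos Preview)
Your proposal contains a genuine gap at its pivot point. You assert that the $SU(3)$-instanton condition implies the pair symmetry $R(X,Y,Z,V)=R(Z,V,X,Y)$, citing \eqref{r4}, but this is false: \eqref{rr} and \eqref{rr1} together only say that $R$ lies in $\frak{su}(3)\otimes\frak{su}(3)\subset\Lambda^2\otimes\Lambda^2$, not that it lies in the symmetric part $S^2\frak{su}(3)$. By Lemma~\ref{4form}, the pair symmetry \eqref{r4} is \emph{equivalent} to $\sb T$ being a $4$-form, and in the paper this is obtained only \emph{after} Theorem~\ref{dt0} (see \eqref{inst7}--\eqref{inst10}), as a consequence of $d^{\sb}T=0$ together with further work in Lemma~\ref{instl} and Theorem~\ref{main1}. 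So your route is circular: you are assuming what ultimately has to be proved downstream. Once that step fails, the object $\Xi=\sb T$ is no longer a $4$-form and Proposition~\ref{4-for} cannot be applied to it.

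The paper avoids this by working with $d^{\sb}T$, which is always a $4$-form, and showing $X\lrcorner d^{\sb}T\in\Lambda^3_{12}$ for every $X$. The $\Phi$-contraction \eqref{inst3} comes from a direct computation using $\sb\theta=0$ and \eqref{inst2}. The $\Psi^{\pm}$-contractions are subtler than you suggest: your observation $\sb_iT_{jkl}\Psi^{\pm}_{jkl}=6\sb_i\lambda=0$ handles only the $(\sb_XT)(Y,Z,V)$ term in the alternation, not the cross terms $(\sb_YT)(Z,X,V)$, etc. The paper instead shows $dT_{iabc}\Psi^{\pm}_{abc}=0$ from \eqref{ricnew} with $d\lambda=d\mu=0$, then uses the instanton condition in \eqref{gen} to get $\sigma^T_{iabc}\Psi^{\pm}_{abc}=0$ (equation \eqref{inst5}), and subtracts via $d^{\sb}T=dT-2\sigma^T$ to obtain \eqref{inst6}. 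That $\sigma^T$ step is the piece your outline is missing.
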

\begin{proof}
We calculate from \eqref{dh}
\begin{multline}\label{inst3}
d^{\sb}T_{pjkl}\Phi_{jkli}=\sb_pT_{jkl}\Phi_{jkli}-3\sb_lT_{jkp}\Phi_{jkli}=6\sb_p\theta_i-3\sb_lT_{jkp}\Big[F_{jk}F_{li}+F_{kl}F_{ji}+F_{lj}F_{ki}\Big]\\=
6\sb_p\theta_i+6\sb_l\theta_sF_{sp}F_{li}-6\sb_lT_{kjp}F_{lk}F_{ji}=0,
\end{multline}
where we used $\sb\theta=0$ and \eqref{inst2} to get the last equality.

We have from \eqref{rr} using \eqref{1bi1}, \eqref{tit} and \eqref{dh}   the following 
\begin{equation}\label{ricnew}
\begin{split}0=R_{iabc}\ps_{abc}=\frac13\Big[R_{iabc}+R_{ibca}+R_{icab} \Big]\ps_{abc}
=\frac16dT_{iabc}\ps_{abc}+2\sb_i\lambda,\\
0=R_{iabc}\sp_{abc}=\frac13\Big[R_{iabc}+R_{ibca}+R_{icab} \Big]\sp_{abc}
=\frac16dT_{iabc}\sp_{abc}+2\sb_i\mu.\end{split}
\end{equation}

Since $M$ is of constant type then $d\lambda=d\mu=0$ and we get from \eqref{ricnew} that 
\begin{equation}\label{inst4}
dT_{iabc}\ps_{abc}=dT_{iabc}\sp_{abc}=0.
\end{equation}
The  $SU(3)$--instanton condition \eqref{rr1} together with \eqref{rr} applied to \eqref{gen} yield
\begin{equation}\label{inst5}
0=\Big[R_{abci}+R_{bcai}+R_{cabi}-R_{iabc}-R_{ibca}-R_{icab}  \Big]\Psi^{\pm}_{abc}=\Big[\frac32dT_{abci}-\sigma^T_{abci}\Big]\Psi^{\pm}_{abc}=-\sigma^T_{abci}\Psi^{\pm}_{abc},
\end{equation}
where we used \eqref{inst4} to achieve the last equality.

Now, \eqref{dh} together with \eqref{inst4} and \eqref{inst5} implies
\begin{equation}\label{inst6}
0=dT_{iabc}\Psi^{\pm}_{abc}=d^{\sb}T_{iabc}\Psi^{\pm}_{abc}+2\sigma^T_{iabc}\Psi^{\pm}_{abc}=d^{\sb}T_{iabc}\Psi^{\pm}_{abc}.
\end{equation}
The identities \eqref{inst3} and \eqref{inst6} show that for any vector field $X$ the 3-form $(X\lrcorner d^{\sb}T)\in \Lambda^3_{12}$. Then the 4-form $d^{\sb}T=0$ due to Proposition~\ref{4-for}.
\end{proof}
In view of $d^{\sb}T=0$ we get from  \eqref{dh} 
\begin{equation}\label{inst8}
dT=2\sigma^T
\end{equation}
and we can write \eqref{inst1} in the form
\begin{multline}\label{inst7}
R(X,Y,Z,V)-R(Z,V,X,Y)\\=(\sb_XT)(Y,Z,V)-(\sb_YT)(X,Z,V)=-(\sb_ZT)(X,Y,V)+(\sb_VT)(X,Y,Z).
\end{multline}
Substitute \eqref{inst8} into \eqref{1bi} and  into \eqref{1bi1} to get
\begin{equation}\label{inst9}
\begin{split}
R(X,Y,Z,V)+R(Y,Z,X,V)+R(Z,X,Y,V)=\sigma^T(X,Y,Z,V)+(\sb_VT)(X,Y,Z);\\
R(V,X,Y,Z)+R(V,Y,Z,X)+R(V,Z,X,Y)=-\sigma^T(X,Y,Z,V)+(\sb_VT)(X,Y,Z).
\end{split}
\end{equation}
The sum of the two identities in \eqref{inst9} yields
\begin{multline}\label{inst10}
R(X,Y,Z,V)+R(Y,Z,X,V)+R(Z,X,Y,V)+R(V,X,Y,Z)+R(V,Y,Z,X)+R(V,Z,X,Y)\\=2(\sb_VT)(X,Y,Z).
\end{multline}
Using the fact that the curvature is (1,1) with respect to the first and the second pairs, we get from \eqref{inst10}
\begin{equation}\label{inst11}
(\sb_VT)(Z,JX,JY)=(\sb_{JV}T)(JZ,X,Y).
\end{equation}

Applying \eqref{rr} and \eqref{instt2} to \eqref{inst7}, we obtain 
\begin{equation}\label{inst12}
(\sb_XT)(Y,Z,V)-(\sb_XT)(Y,JZ,JV)=(\sb_YT)(X,Z,V)-(\sb_YT)(X,JZ,JV).
\end{equation}
Define the tensor $C$ as follows
\begin{equation}\label{cht}
2C(Z;X,Y)=T^+(Z,JX,JY)-T^+(Z,X,Y),
\end{equation}
where $T^+$ denotes the (1,2)+(2,1)-part of the torsion 3-form $T$. 

From  equation \eqref{cy2} we get  $T^+=-dF^+(J,J,J),\quad T^-=\frac14N$.
It is easy to check applying \eqref{12} 
\begin{equation}\label{tch}
\begin{split}
T^+(Z,X,Y)=-C(Z;X,Y)-C(X;Y,Z)-C(Y;Z,X); \\ C(JZ;X,Y)=-C(Z;JX,Y)=-C(Z;X,JY).
\end{split}
\end{equation}
Note that in the complex case, $N=0$, the tensor $C$ is precisely the torsion of the Chern connection.

It follows from \eqref{cht} and \eqref{tch} that the condition $\sb C=0$ is equivalent to  $\sb T^+=0$. Since the Nijenhuis tensor $N=4T^-$ is $\sb$-parallel  we get  that $\sb C=0$ exactly when $ \sb T=0$.

Now, equality \eqref{inst11} yields
\begin{equation}\label{inst14}
(\sb_VC)(Z;X,Y)=-(\sb_{JV}C)(JZ;X,Y).
\end{equation}
We can write \eqref{inst12} in the form
\begin{equation}\label{inst13}
(\sb_XC)(Y;Z,V)=(\sb_YC)(X;Z,V).
\end{equation}
We use the complex basis $E_{\alpha}=e_a-iJe_a,\quad \bar{E_a}=E_{\bar{\alpha}}=e_a+iJe_a$.

We obtain from \eqref{tch}, \eqref{inst14} and \eqref{inst13} that 
\begin{equation}\label{propch}
\begin{split}
C_{\bar\alpha;\bar\beta\gamma}=C_{\bar\alpha;\bar\beta\bar\gamma}=C_{\alpha\beta\gamma}=C_{\alpha;\beta\bar\gamma}=0 \,,\quad T^+_{\bar\alpha\beta\gamma}=-T^+_{\beta\bar\alpha\gamma}=-C_{\bar\alpha;\beta\gamma}=C_{\bar\alpha;\gamma\beta} \,,\\
\sb_{\alpha}C_{\bar\beta;\gamma\mu}=\sb_{\bar\alpha}C_{\beta;\bar\gamma\bar\mu}=0 \,, \quad \sb_{\alpha}C_{\beta;\bar\gamma\bar\mu}=\sb_{\beta}C_{\alpha;\bar\gamma\bar\mu} \,, \quad  \sb_{\bar\alpha}C_{\bar\beta;\gamma\mu}=\sb_{\bar\beta}C_{\bar\alpha;\gamma\mu} \,.
\end{split}
\end{equation}
\begin{lemma}\label{instl} 
If $\sb$ is an $SU(3)$--instanton connection with parallel Lee form, then it holds
\[
\begin{split}
\sb_{\bar\alpha}\sb_{\bar\beta}C_{\bar\gamma;\mu\nu}-\sb_{\bar\beta}\sb_{\bar\alpha}C_{\bar\gamma;\mu\nu}=-T^+_{\bar\alpha\bar\beta\sigma}\sb_{\bar\sigma}C_{\bar\gamma;\mu\nu}; \\ 2\sb_{\bar\alpha}\sb_{\bar\beta}C_{\bar\gamma;\alpha\beta}=-T^+_{\bar\alpha\bar\beta\sigma}\sb_{\bar\gamma}C_{\bar\sigma;\alpha\beta}=\frac12\sb_{\bar\gamma}||C||^2; \\ \sb_{\bar\alpha}\sb_{\bar\beta}C_{\bar\gamma;\alpha\beta}=0.
\end{split}
\]
In particular, the norm of the torsion is a constant.
\end{lemma}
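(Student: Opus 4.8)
I would obtain the three identities from the second-order commutation (Ricci) identity for the torsion connection, combined with the $SU(3)$-instanton condition and the structure relations \eqref{propch}. Fix a point $p$ and a local frame that is $\sb$-parallel at $p$; then for any tensor $S$ one has at $p$ the identity $\sb_X\sb_Y S-\sb_Y\sb_X S=R(X,Y)\cdot S-\sb_{T(X,Y)}S$, since $[X,Y]|_p=-T(X,Y)|_p$ in such a frame. Applying this with $X=E_{\bar\alpha}$, $Y=E_{\bar\beta}$, $S=C$: the instanton condition \eqref{instt2} forces the curvature $2$-form of $\sb$ to be of type $(1,1)$, so its $(0,2)$-part vanishes and $R(E_{\bar\alpha},E_{\bar\beta})=0$ as an endomorphism. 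Decomposing $T=T^++\tfrac14 N$ by type, the vector $T(E_{\bar\alpha},E_{\bar\beta})$ has an antiholomorphic component with coefficient $T^+_{\bar\alpha\bar\beta\sigma}$ and a holomorphic one proportional to $N_{\bar\alpha\bar\beta\bar\sigma}$; the latter acts on the component $C_{\bar\gamma;\mu\nu}$ through $\sb_\sigma C_{\bar\gamma;\mu\nu}$, which vanishes by \eqref{propch} (this component is $\sb$-holomorphic). Hence only $-T^+_{\bar\alpha\bar\beta\sigma}\sb_{\bar\sigma}C_{\bar\gamma;\mu\nu}$ survives, which is the first identity (the complex frame normalised so the constant in front equals $1$).

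For the second identity I would set $\mu=\alpha$, $\nu=\beta$ in the first identity and sum; by antisymmetry of $C$ in its last two arguments (from \eqref{cht}, \eqref{tch}) the left-hand side collapses to $2\sb_{\bar\alpha}\sb_{\bar\beta}C_{\bar\gamma;\alpha\beta}$, so $2\sb_{\bar\alpha}\sb_{\bar\beta}C_{\bar\gamma;\alpha\beta}=-T^+_{\bar\alpha\bar\beta\sigma}\sb_{\bar\sigma}C_{\bar\gamma;\alpha\beta}$, and the relation $\sb_{\bar\sigma}C_{\bar\gamma;\alpha\beta}=\sb_{\bar\gamma}C_{\bar\sigma;\alpha\beta}$ of \eqref{propch} turns the right-hand side into $-T^+_{\bar\alpha\bar\beta\sigma}\sb_{\bar\gamma}C_{\bar\sigma;\alpha\beta}$. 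For the last equality I would use $T^+_{\bar\alpha\bar\beta\sigma}=-C_{\sigma;\bar\alpha\bar\beta}$ (from $T^+$ being a real $3$-form and $T^+_{\bar\alpha\beta\gamma}=-C_{\bar\alpha;\beta\gamma}$ in \eqref{propch}) together with $\sb_{\bar\gamma}C_{\sigma;\bar\alpha\bar\beta}=0$ from \eqref{propch}: the latter lets $\sb_{\bar\gamma}$ be pulled outside the full contraction $T^+_{\bar\alpha\bar\beta\sigma}C_{\bar\sigma;\alpha\beta}=-C_{\sigma;\bar\alpha\bar\beta}C_{\bar\sigma;\alpha\beta}$, which---after expressing $||C||^2$ in the complex frame, where the only nonzero components of $C$ are $C_{\bar\alpha;\beta\gamma}$ and its conjugate---is a fixed multiple of $||C||^2$, the conventions giving precisely $\tfrac12\sb_{\bar\gamma}||C||^2$.

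The third identity is the crucial point. The idea is to apply the symmetry ``$\sb$-index $\leftrightarrow$ first $C$-index'' of \eqref{propch} to the inner derivative, treating $\beta$ as the contracted index: $\sb_{\bar\beta}C_{\bar\gamma;\alpha\beta}=\sb_{\bar\gamma}C_{\bar\beta;\alpha\beta}$, so that $\sb_{\bar\alpha}\sb_{\bar\beta}C_{\bar\gamma;\alpha\beta}=\sb_{\bar\alpha}\sb_{\bar\gamma}V_\alpha$ with $V_\alpha:=C_{\bar\beta;\alpha\beta}$ the trace of $C$ over its first and third slots. By \eqref{tch} this trace equals $-T^+_{\bar\beta\alpha\beta}$, a fixed multiple of the contraction of $T^+$ with $F$; since $N$ has vanishing contraction with $F$ by \eqref{iden}, $V$ is a constant multiple of the Lee form $\theta$ (by \eqref{liff}, \eqref{tit}). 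The hypothesis $\sb\theta=0$ then makes $V$ itself $\sb$-parallel, so every covariant derivative of $V$ vanishes and $\sb_{\bar\alpha}\sb_{\bar\beta}C_{\bar\gamma;\alpha\beta}=\sb_{\bar\alpha}\sb_{\bar\gamma}V_\alpha=0$. Comparison with the second identity then forces $\sb_{\bar\gamma}||C||^2=0$; as $||C||^2$ is real this also gives $\sb_\gamma||C||^2=0$, so $||C||^2$ is constant, and since $||T||^2=||T^+||^2+\tfrac{1}{16}||N||^2$ with $||T^+||^2$ a fixed multiple of $||C||^2$ (by $T^+_{\bar\alpha\beta\gamma}=-C_{\bar\alpha;\beta\gamma}$) and $||N||^2$ constant under the standing hypothesis, $||T||$ is constant. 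The main obstacle is organising the barred/unbarred index bookkeeping and the normalisation constants cleanly; the one conceptual ingredient beyond the instanton condition and \eqref{propch} is the recognition that the relevant trace of $C$ is the ($\sb$-parallel) Lee form.
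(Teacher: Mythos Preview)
Your argument is correct and follows essentially the same route as the paper's proof: the Ricci identity together with $R_{\bar\alpha\bar\beta}=0$ and $\sb_\sigma C_{\bar\gamma;\mu\nu}=0$ gives the first line; tracing $\mu=\alpha$, $\nu=\beta$ and using the symmetry $\sb_{\bar\alpha}C_{\bar\beta;\gamma\mu}=\sb_{\bar\beta}C_{\bar\alpha;\gamma\mu}$ from \eqref{propch} gives the second; and for the third you use that same symmetry on the inner derivative to reduce to a second derivative of the trace $C_{\bar\beta;\alpha\beta}$, which is (a multiple of) the Lee form and hence $\sb$-parallel. The paper does exactly this, writing $C_{\bar\alpha;\beta\alpha}=\theta_\beta$ directly and invoking $\sb\theta=0$; your slightly more detailed justification of the $\tfrac12\sb_{\bar\gamma}\|C\|^2$ equality and of why the relevant trace of $C$ is the Lee form is sound and matches the paper's implicit reasoning.
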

\begin{proof}
The instanton condition yields the curvature 2-form $R_{\bar a\bar b}=0$. Therefore,
\[0=R_{\bar\alpha\bar\beta}\times C_{\bar\gamma;\mu\nu}=R_{\bar\alpha,\bar\beta,\bar\gamma,\sigma}C_{\bar\sigma;\mu\nu}+R_{\bar\alpha,\bar\beta,\mu,\sigma}C_{\bar\gamma;\sigma\nu}+R_{\bar\alpha,\bar\beta,\nu,\sigma}C_{\bar\gamma;\mu\sigma}.\]
The Ricci identity for $\sb$ reads with the help of \eqref{cy2} 
\[\sb_{\bar\alpha}\sb_{\bar\beta}C_{\bar\gamma;\mu\nu}-\sb_{\bar\beta}\sb_{\bar\alpha}C_{\bar\gamma;\mu\nu}=-R_{\bar\alpha\bar\beta}\times C_{\bar\gamma;\mu\nu}-T^+_{\bar\alpha\bar\beta\sigma}\sb_{\bar\sigma}C_{\bar\gamma;\mu\nu}
-\frac14N_{\bar\alpha\bar\beta\bar\sigma}\sb_{\sigma}C_{\bar\gamma;\mu\nu}=-T^+_{\bar\alpha\bar\beta\sigma}\sb_{\bar\sigma}C_{\bar\gamma;\mu\nu} \,, 
\]
since  $\sb_{\sigma}C_{\bar\gamma;\mu\nu}=0$ because of \eqref{propch}. This proves the  first line of the lemma.

Set $\mu=\alpha, \nu=\beta$ into the already proved first equality  we obtain the second line using  \eqref{propch}.

To show the last statement, we apply again \eqref{inst13} to get
\[ \sb_{\bar\beta}\sb_{\bar\alpha}C_{\bar\gamma;\beta\alpha}=\sb_{\bar\beta}\sb_{\bar\gamma}C_{\bar\alpha;\beta\alpha}=\sb_{\bar\beta}\sb_{\bar\gamma}\theta_{\beta}=0\]
which combined with the already proved second identity of the lemma yields $\sb||C||^2=0$ and therefore $\sb||T||^2=0$ since $\sb N=0$.
\end{proof}
\subsection{Proof of Theorem~\ref{main}}
It is clear from \eqref{rr} that if \eqref{r4} holds, i.e. $\sb T$ is a 4-form due to \eqref{4form}, then   the curvature of the torsion connection $\sb$ of an ACYT manifold is an instanton.  In particular, if $\sb T=0$ then $R$ is an instanton, the Lee form is $\sb$-parallel and therefore co-closed, $\delta\theta=0$. The next result completes  the proof of Theorem~\ref{main}.
\begin{thrm}\label{main1}  Let $(M,g,J,\Psi)$ be an ACYT 6-manifold  with  constant norm of the Nijenhuis tensor and $\sb$-parallel Lee form.  If the  curvature of the torsion connection $\sb$   is an $SU(3)$-instanton   then the torsion is $\sb$-parallel, $\sb T=0$.
\end{thrm}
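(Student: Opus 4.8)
The plan is to exploit the refined structure for the torsion tensor $C$ established just before the statement, namely the vanishing relations \eqref{propch} and the second-order identities of Lemma~\ref{instl}. The key observation is that Lemma~\ref{instl} already delivers $\sb\|C\|^2 = 0$, so $\|C\|$ is constant; combined with $\sb N = 0$ this gives $\|T\|$ constant and, via \eqref{rics} together with $Ric(X,Y)-Ric(Y,X) = -\delta T$ and $\delta T = d^{\sb}\theta$ from Proposition~\ref{suin1}, strong control on the Ricci tensor. My first step would therefore be to convert the constancy of $\|C\|^2$ into a Bochner-type identity: differentiate once more and use the Ricci identity for $\sb$ together with the curvature relations $R_{\bar a\bar b}=0$ (instanton) and $R \in \frak{su}(3)$ to obtain an expression for $\sb_{\alpha}\sb_{\bar\alpha}\|C\|^2$ as a sum of $\|\sb C\|^2$ plus curvature contractions that vanish by the instanton condition.

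Concretely, starting from $0 = \sb_{\alpha}\sb_{\bar\alpha}\|C\|^2 = \sb_{\alpha}\bigl(\sum \sb_{\bar\alpha} C \cdot \bar C + C\cdot \sb_{\bar\alpha}\bar C\bigr)$ and using \eqref{propch} to discard the many vanishing components of $\sb C$, one reduces to a term of the form $\sum |\sb_{\bar\alpha} C_{\bar\gamma;\mu\nu}|^2$ plus $\sum C \cdot \sb_{\alpha}\sb_{\bar\alpha}\bar C$. For the second-derivative term I would commute $\sb_{\alpha}\sb_{\bar\alpha}$ to $\sb_{\bar\alpha}\sb_{\alpha}$ paying a curvature price $R_{\alpha\bar\alpha}\times C$ — which is a trace of the curvature in the instanton directions and hence vanishes because the curvature lies in $\frak{su}(3)$ (the trace $R_{a Ja}$ is zero) — plus a torsion term $T^+_{\alpha\bar\alpha\sigma}\sb_{(\cdot)}C$; after that, $\sb_{\alpha}$ hits a component of $\sb_{\bar\alpha}\bar C$ that is itself controlled by \eqref{propch} (many mixed second derivatives vanish), so the surviving piece can be rearranged, using the symmetry identities $\sb_{\bar\alpha}C_{\bar\beta;\gamma\mu} = \sb_{\bar\beta}C_{\bar\alpha;\gamma\mu}$ and Lemma~\ref{instl}'s third line $\sb_{\bar\alpha}\sb_{\bar\beta}C_{\bar\gamma;\alpha\beta}=0$, into something that either vanishes or is again absorbed. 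The upshot should be $0 = 2\|\sb C\|^2 + (\text{terms that vanish})$, forcing $\sb C = 0$.

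Once $\sb C = 0$ is in hand, the conclusion $\sb T = 0$ is immediate: as noted in the paragraph following \eqref{propch}, $\sb C = 0$ is equivalent to $\sb T^+ = 0$, and since $\sb N = 4\sb T^- = 0$ on this space (constant norm of $N$, hence $\sb N=0$ by Theorem~\ref{thnnew}/the balanced case of \cite[Proposition~4.2]{IS1}, already assumed), we get $\sb T = \sb T^+ + \sb T^- = 0$. So the whole argument funnels through the single Bochner computation for $\|C\|^2$.

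The main obstacle I anticipate is the bookkeeping in that second-derivative computation: I will need to be scrupulous about which components of $\sb C$ and $\sb\sb C$ survive under \eqref{propch}, to correctly track the torsion correction terms $T^+_{\bar\alpha\bar\beta\sigma}\sb_{\bar\sigma}C$ that appear when commuting covariant derivatives of type $(0,2)$ (as in Lemma~\ref{instl}), and — crucially — to verify that all the curvature terms that show up are genuinely traces in the instanton plane so that $R_{\bar a\bar b} = 0$ and the $\frak{su}(3)$-conditions \eqref{rr}, \eqref{rr1} kill them. A secondary subtlety is making sure no boundary/integration is needed: the argument as sketched is purely pointwise (differentiating the constant $\|C\|^2$), which is exactly what is wanted for the non-compact Theorem~\ref{main}; if a pointwise route stalls, the fallback is to integrate the Bochner identity, but that would only suffice in the compact setting of Theorem~\ref{mainc}, so I would push to keep everything local.
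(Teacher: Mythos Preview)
Your plan is correct and is essentially the paper's own argument: differentiate the constant $\|C\|^2$ to obtain a pointwise Bochner identity, commute $\sb_{\alpha}\sb_{\bar\alpha}$ via the Ricci identity, kill the curvature term by the instanton condition $R_{\alpha\bar\alpha}=0$, and conclude $\|\sb C\|^2=0$. The one step you leave vague is the torsion correction: after the commutation the surviving term is $2\theta_{\sigma}\sb_{\bar\sigma}C_{\bar\gamma;\mu\nu}$ (since $T_{\alpha\bar\alpha\gamma}=-2\theta_{\gamma}$), and the symmetry \eqref{propch} plus $\sb\theta=0$ only rewrite it as $2\sb_{\bar\gamma}\bigl(\theta_{\sigma}C_{\bar\sigma;\mu\nu}\bigr)$; to make this vanish the paper invokes Corollary~\ref{insu4} ($\theta\lrcorner T\in\Lambda^2_8$), which forces $2C(\theta;X,Y)=-\tfrac12 N(\theta,X,Y)$ and hence $(\sb_Z C)(\theta;\cdot,\cdot)=0$ because $\sb N=\sb\theta=0$. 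You should anticipate needing that ingredient rather than just the identities in \eqref{propch} and Lemma~\ref{instl}.
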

\begin{proof} 
Since $||C||^2=const$, we have
\begin{multline}\label{inst17}
0=\sb_{\alpha}\sb_{\bar\alpha}||C||^2=\sb_{\alpha}\Big[\sb_{\bar\alpha}C_{\bar\sigma;\mu\nu}C_{\sigma;\bar\mu\bar\nu}+C_{\bar\sigma;\mu\nu}\sb_{\bar\alpha}C_{\sigma;\bar\mu\bar\nu}\Big]=\sb_{\alpha}\sb_{\bar\alpha}C_{\bar\sigma;\mu\nu}C_{\sigma;\bar\mu\bar\nu}+||\sb C||^2,
\end{multline}
where we apply \eqref{inst14} to see that $\sb_{\bar\alpha}C_{\sigma;\bar\mu\bar\nu}=0$.

The Ricci identity together with \eqref{inst14} yields 
\begin{equation}\label{ricid}
\begin{split}
\sb_{\alpha}\sb_{\bar\alpha}C_{\bar\gamma;\mu\nu}=\sb_{\bar\alpha}\sb_{ \alpha}C_{\bar\gamma;\mu\nu}-R_{\alpha\bar\alpha}\times C_{\bar\gamma;\mu\nu}-T_{\alpha\bar 
\alpha\sigma}\sb_{\bar\sigma}C_{\bar\gamma;\mu\nu}- T_{\alpha\bar 
\alpha\bar\sigma}\sb_{\sigma}C_{\bar\gamma;\mu\nu}\\ =2\theta_{\sigma}\sb_{\bar\sigma}C_{\bar\gamma;\mu\nu}=2\theta_{\sigma}\sb_{\bar\gamma}C_{\bar\sigma;\mu\nu}=2\sb_{\bar\gamma}(\theta_{\sigma}C_{\bar\sigma;\mu\nu}),
\end{split}
\end{equation}
 where the first and the fourth terms in the right hand side vanishes because of \eqref{inst14}, the second term is zero since $R$ is an instanton, $R_{\alpha\bar\alpha}=0$, and for the third term we used the identity $T_{\alpha\bar\alpha\gamma}=-2\theta_{\gamma}$. The last two equalities  we obtain because of the symmetries in \eqref{propch} and the condition $\sb\theta=0$.

Using Lemma~\ref{insu4},  \eqref{cy2} and \eqref{cht},  we obtain
\[ 2C(\theta;X,Y)=T^+(\theta,JX,JY)-T^+T(\theta,X,Y)=T(\theta,JX,JY)-T(\theta,X,Y)-\frac12N(\theta,X,Y)=-\frac12N(\theta,X,Y),\]
which yields 
\begin{equation}\label{par}
(\sb_ZC)(\theta;X,Y)=0, \quad since \quad \sb N=\sb\theta=0.
\end{equation}
Now, \eqref{par}, \eqref{ricid} and \eqref{inst17} imply $||\sb T||^2=||\sb C||^2=0$ which completes the proof of Theorem~\ref{main}.
\end{proof}

\subsection{Proof of Theorem~\ref{mainc}}
It is known from \cite[Theorem~2.4]{IS1} that on any compact 6-dimensional ACYT the Nijenhuis tensor is $\sb$-parallel.  In this case we may replace the condition $\sb\theta=0$ in Theorem~\ref{main} with the weaker condition $\delta\theta=0$ due to the next result
\begin{thrm}\label{gaud}
 Let $(M,g,J,\Psi)$ be a compact ACYT 6-manifold  with  co-closed Lee form, $\delta\theta=0$.  If the  curvature of the torsion connection $\sb$   is an $SU(3)$-instanton   then the Lee form is $\sb$-parallel. 
\end{thrm}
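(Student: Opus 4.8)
Assume $M$ is a compact ACYT $6$-manifold with $\delta\theta=0$ and with $SU(3)$--instanton torsion connection; the goal is to deduce $\sb\theta=0$, after which Theorem~\ref{main1} applies and gives $\sb T=0$, completing Theorem~\ref{mainc}. Since $M$ is compact, Theorem~\ref{thnnew} yields $\sb N=0$, so $N$ has constant norm and $d\lambda=d\mu=0$; hence Propositions~\ref{suin1}, \ref{suin2}, Lemma~\ref{insu3} and Corollary~\ref{insu4} all apply, and under the instanton hypothesis we have $\delta T=d^{\sb}\theta\in\Lambda^2_8\cong\frak{su}(3)$, $\theta\lrcorner T\in\Lambda^2_8$, and $\sb\theta$ is of type $(1,1)$, i.e. $(\sb_X\theta)(Y)=(\sb_{JX}\theta)(JY)$.

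The first step is a Bochner-type identity for the torsion connection. Applying the Ricci identity for $\sb$ to the $1$-form $\theta$ and contracting, the hypothesis $\delta\theta=-\sb_i\theta_i=0$ kills the term $\sb_j(\sb_i\theta_i)$, while the identity $d^{\sb}\theta=\delta T$ together with $\delta^2T=0$ reduces $\sb_i(d^{\sb}\theta)_{ij}$ to a single torsion trace which cancels exactly against the torsion trace produced by the Ricci identity. What remains is the clean equation $\sb_i\sb_i\theta_j=Ric(e_j,e_l)\,\theta_l$, with $Ric$ the Ricci tensor of $\sb$. Integrating over the compact $M$ yields $\int_M|\sb\theta|^2=-\int_M Ric(\theta,\theta)$. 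Inserting now the ACYT condition $\rho=0$, equivalent to $Ric(X,Y)=-(\sb_X\theta)(Y)+\frac14 dT(X,JY,e_i,Je_i)$, the first term integrates to zero since $\int_M(\sb_\theta\theta)(\theta)=\frac12\int_M\theta^i\sb_i|\theta|^2=\frac12\int_M|\theta|^2\,\delta\theta=0$, and a short computation gives $\sum_i dT(\theta,J\theta,e_i,Je_i)=-2\langle dT,\theta\wedge J\theta\wedge F\rangle$. Stokes' theorem therefore produces
\[
\int_M|\sb\theta|^2=\tfrac12\int_M\big\langle\, T,\ \delta(\theta\wedge J\theta\wedge F)\,\big\rangle .
\]
Up to this point only compactness, $\delta\theta=0$ and the ACYT structure have been used; the instanton condition has not yet entered.

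The remaining task is to show that the instanton hypothesis forces the right-hand side to be non-positive. The plan is to compute $\delta(\theta\wedge J\theta\wedge F)$ explicitly: the form $\theta\wedge J\theta$ is of type $(1,1)$ with $\langle\theta\wedge J\theta,F\rangle=-|\theta|^2$, so $*(\theta\wedge J\theta\wedge F)=\frac13|\theta|^2F-\theta\wedge J\theta$; feeding this into $\delta=-*d*$ and using the structure equations — the formula \eqref{torcy} for $*dF$, the equations \eqref{cycon} for $d\ps,d\sp$, and the co-closedness $\delta(J\theta)=0$ — expresses $\delta(\theta\wedge J\theta\wedge F)$ as a combination of $T$, $\theta\lrcorner\Phi$, $(d|\theta|^2)\lrcorner\Phi$, $\ps$, $\sp$ and $d\theta\wedge J\theta$. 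One then pairs with $T$, integrates by parts once more, and invokes the instanton conditions \eqref{instt2}, \eqref{rr1} together with the memberships $d^{\sb}\theta,d\theta,\theta\lrcorner T\in\frak{su}(3)$, the identity \eqref{inst2}, and the algebraic $SU(3)$-relations \eqref{iden}--\eqref{star}; the integrand should then collapse to a manifestly non-positive expression, which I expect to be of the shape $\int_M\langle T,\delta(\theta\wedge J\theta\wedge F)\rangle=-c_1\int_M|\sb\theta|^2-c_2\int_M|\theta\lrcorner T|^2$ with $c_1,c_2\ge0$. Combined with the displayed identity this forces $(2+c_1)\int_M|\sb\theta|^2+c_2\int_M|\theta\lrcorner T|^2=0$, hence $\sb\theta=0$, as required.

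The main obstacle is exactly this last step: the $SU(3)$-bookkeeping that converts $\langle T,\delta(\theta\wedge J\theta\wedge F)\rangle$ into a sign-definite quantity, keeping careful track of the Hermitian and trace-free parts of $\sb\theta$ and of the various $\Lambda^2_8$-components. This is where compactness (through Stokes and $\delta^2=0$) and the instanton hypothesis genuinely enter, in contrast with the balanced / $\sb$-parallel Lee form case of Theorem~\ref{main1}, whose argument is entirely pointwise.
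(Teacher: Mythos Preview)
Your Bochner step is correct and coincides with the paper: the identity $\sb_i\delta T_{ij}=\tfrac12\,\delta T_{ia}T_{iaj}$ (equivalently $\delta^2T=0$) together with $\delta T=d^{\sb}\theta$ and the Ricci identity gives exactly the paper's equation \eqref{gafin}, namely $\sb_i\sb_i\theta_j - Ric_{js}\theta_s=0$, and multiplying by $\theta_j$ yields \eqref{gafinf}. Your integrated version $\int_M|\sb\theta|^2=-\int_M Ric(\theta,\theta)$ is an equally good starting point.

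The genuine gap is what follows. You reduce $-\int Ric(\theta,\theta)$ to $\tfrac12\int\langle T,\delta(\theta\wedge J\theta\wedge F)\rangle$ and then \emph{stop}: the claimed collapse to $-c_1\int|\sb\theta|^2-c_2\int|\theta\lrcorner T|^2$ is never carried out, and you yourself flag the ``$SU(3)$-bookkeeping'' as the main obstacle. Nothing in your outline indicates which instanton identity would produce this sign, so the argument is incomplete at its decisive point.

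The paper avoids this detour entirely. Instead of integrating $dT$ against $\theta\wedge J\theta\wedge F$ and pushing the derivative across, it evaluates $Ric(\theta,\theta)$ \emph{pointwise} via the alternative Ricci formula \eqref{ricdt}, $Ric_{ij}=\tfrac{1}{12}dT_{abci}\Phi_{abcj}-\sb_i\theta_j$, and then splits $dT=d^{\sb}T+2\sigma^T$. The instanton identity \eqref{inst2} gives $\sb_aT_{bci}\Phi_{abcj}=-2\sb_i\theta_j-4\sb_j\theta_i$, which handles the $d^{\sb}T$ piece; and the membership $\theta\lrcorner T\in\Lambda^2_8$ (your Corollary~\ref{insu4}) forces $\sigma^T_{abci}\Phi_{abcj}\theta_i\theta_j=0$. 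The upshot is the clean pointwise identity $Ric(\theta,\theta)=-\tfrac32\,\theta^i\sb_i|\theta|^2$. Substituted back into the Bochner identity this gives the elliptic inequality $\Delta|\theta|^2+3\,\theta^i\sb_i|\theta|^2=-|\sb\theta|^2\le 0$, and the strong maximum principle (or, equivalently, integration using $\delta\theta=0$) yields $\sb\theta=0$. Note in particular that under the instanton hypothesis one gets $\int_M Ric(\theta,\theta)=0$ on the nose, so the remainder you expected to be sign-definite is in fact identically zero; your $c_1,c_2$ would both be $0$, and the conclusion comes directly from $\int|\sb\theta|^2=0$.
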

\begin{proof} We start with the next identity
\begin{equation}\label{iii}
\sb_i\delta T_{ij}=\frac12\delta T_{ia}T_{iaj} \,,
\end{equation}
 shown in \cite[Proposition~3.2]{IS} for any metric connection with a totally skew-symmetric torsion. 

We calculate the left-hand side  of \eqref{iii} applying Lemma~\ref{insu3} as follows
\begin{equation}\label{ntss}
\begin{split}
\sb_i\delta T_{ij}=\sb_id^{\sb}\theta_{ij}=
\sb_i\sb_i\theta_j-\sb_i\sb_j\theta_i.
\end{split}
\end{equation}
Substitute \eqref{ntss} into \eqref{iii} 
to get
\begin{equation}\label{ntss1}
\sb_i\sb_i\theta_j-\sb_i\sb_j\theta_i=\frac12d^{\sb}\theta_{ab}T_{abj}.
\end{equation}
The Ricci identity  
$\sb_i\sb_j\theta_i=
\sb_j\sb_i\theta_i+Ric_{js}\theta_s-\frac12d^{\sb}\theta_{ai}T_{aij}$ substituted into \eqref{ntss1} yields
\begin{equation}\label{gafin}
\sb_i\sb_i\theta_j+\sb_j\delta\theta-Ric_{js}\theta_s=0.
\end{equation}
Multiply the both sides of \eqref{gafin} with $\theta_j$, use  $\delta\theta=0$ together with the identity 
$$\frac12\Delta||\theta|^2=-\frac12\LC_i\LC_i||\theta||^2=-\frac12\sb_i\sb_i||\theta||^2=-\theta_j\sb_i\sb_i\theta_j-||\sb\theta||^2$$ 
to get
\begin{equation}\label{gafinf}
-\frac12\Delta||\theta||^2-Ric(\theta,\theta)-||\sb\theta||^2=0.
\end{equation}
We obtain from \eqref{ins4} that 
\begin{equation}\label{rdt1}
\sb_aT_{bci}\ph_{abcj}=-2\sb_i\theta_j-4\sb_j\theta_i.
\end{equation}
We get from \eqref{ricdt} applying \eqref{dh} and \eqref{rdt1}
\begin{equation}\label{rdt}
\begin{split}
Ric_{ij}\theta_i\theta_j=\frac1{12}dT_{abci}\ph_{abcj}\theta_i\theta_j-\theta_i\theta_j\sb_i\theta_j\\=
\frac1{12}\Big[3\sb_aT_{bci}\ph_{abcj}-6\sb_i\theta_j +2\sigma^T_{abci}\ph_{abcj} \Big]\theta_i\theta_j-\theta_i\theta_j\sb_i\theta_j=-\frac32\theta_i\sb_i||\theta||^2 +\frac16\sigma^T_{abci}\ph_{abcj} \theta_i\theta_j.
\end{split}
\end{equation}
We will show that the last term in \eqref{rdt} vanishes.

Indeed, we have from Corollary~\ref{insu4} that  $\theta\lrcorner T=d\theta-\delta T\in \Lambda^2_8\cong\frak{su}(3)$.

We calculate from \eqref{sigma} applying the first identity in \eqref{iden} and \eqref{tetor}
\begin{multline}\label{pres}
\frac13\sigma^T(e_a,e_b,e_c,\theta)\ph(e_a,e_b,e_c,\theta)=T(e_a,e_b,e_s)T(e_s,e_c,\theta)\ph(e_a,e_b,e_c,\theta)\\=T(e_a,e_b,e_s)T(e_s,e_c,\theta)\Big[F(e_a,e_b)F(e_c,\theta)+F(e_b,e_c)F(e_a,\theta)+F(e_c,e_a)F(e_b,\theta)    \Big]\\=T(Je_b,e_b,e_s)T(e_s,J\theta,\theta)+T(J\theta,Je_c,e_s)T(e_s,e_c,\theta)-T(Je_c,J\theta,e_s)T(e_s,e_c,\theta)\\=-T(Je_b,e_b,e_s)T(Je_s,\theta,\theta)+2T(J\theta,Je_c,e_s)T(e_s,e_c,\theta) =2T(J\theta,Je_c,e_s)T(e_s,e_c,\theta).
\end{multline}
Applying  \eqref{tetor}, we have
\[
\begin{split}T(J\theta,Je_c,e_s)T(e_s,e_c,\theta) =-T(J\theta,e_c,e_s)T(e_s,Je_c,\theta)=T(J\theta,e_c,e_s)T(Je_s,e_c,\theta)\\
=-T(J\theta,e_c,Je_s)T(e_s,e_c,\theta)=-T(J\theta,Je_s,e_c)T(e_s,e_c,\theta).
\end{split}\]
Hence, $T(J\theta,Je_c,e_s)T(e_s,e_c,\theta)=0$ and  \eqref{pres} yields $\sigma^T_{abci}\ph_{abcj} \theta_i\theta_j=0$ which substituted into \eqref{rdt} gives
\begin{equation}\label{rdt2}
Ric_{ij}\theta_i\theta_j=-\frac32\theta_i\sb_i||\theta||^2.
\end{equation}
Insert \eqref{rdt2} into \eqref{gafinf} to get 
\begin{equation}\label{gafinf1}
\Delta||\theta||^2+3\theta_i\sb_i||\theta||^2=-||\sb\theta||^2\le 0.
\end{equation}
We  apply  the strong maximum principle to \eqref{gafinf1} (see e.g. \cite{YB,GFS}) to achieve $d||\theta||^2=\sb\theta=0$.
\end{proof}
The proof of Theorem~\ref{mainc} follows from  Theorem~\ref{main} and Theorem~\ref{gaud}.


\subsection{Proof of Theorem~\ref{hul}}

We recall that the Hull connection $\sb^h$ is defined to be the metric connection with torsion $-T$, where $T$ is the torsion of the torsion connection $\sb$,
\begin{equation}\label{hu}\sb^h=\LC-\frac12T=\sb-T.
\end{equation}
\begin{proof}
We start with the general well-known formula for the curvatures of two metric connections with totally skew-symmetric torsion $T$ and $-T$, respectively, see e.g. \cite{MS}, which applied to the curvatures of the torsion connection and the $SU(3)$-Hull connection reads
\begin{equation}\label{hust}
R(X,Y,Z,V)-R^h(Z,V,X,Y)=\frac12dT(X,Y,Z,V).
\end{equation}
If $dT=0$ the result was already  observed  in \cite{MS}. Indeed, in this case  the $SU(3)$-Hull connection is an $SU(3)$--instanton since $\sb$ preserves the $SU(3)$ structure and the holonomy group of $\sb$ is contained in the Lie algebra $\frak{su}(3)$ \cite{MS}.

For the converse, \eqref{hust}, the  $SU(3)$--instanton condition \eqref{rr1} applied for  $R^h$ and \eqref{rr} yield
\begin{equation}\label{huin1}
\begin{split}
dT_{iabc}\ps_{abc}=R_{iabc}\ps_{abc}+R^h_{bcai}\ps_{abc}=0 \,,\\
dT_{iabc}\sp_{abc}=R_{iabc}\sp_{abc}+R^h_{bcai}\sp_{abc}=0 \,,\\
dT_{iabc}\Phi_{jabc}=R_{iabc}\Phi_{jabc}+R^h_{bcai}\Phi_{jabc}=-2R_{iaja}-2R^h_{jaai}=2Ric_{ij}-2Ric^h_{ji}=0 \,,
\end{split}
\end{equation}
where $Ric^h$ is the Ricci tensor of the $SU(3)$-Hull connection and  the trace of \eqref{hust} gives $Ric(X,V)-Ric^h(V,X)=0.$
Now, \eqref{huin1} show that for any vector field $X$ the 3-form $(X\lrcorner dT)\in \Lambda^3_{12}$. Then the 4-form $dT=0$ due to Proposition~\ref{4-for}.
\end{proof}

\section{Examples} 
\subsection{Complex case}\label{Ex-Complex-case}
It is clear due to \cite[Lemma~3.4]{I} that a CYT space  with parallel torsion with respect to the Strominger-Bismut connection  the curvature of the Strominger-Bismut connection is an  $SU(3)$--instanton. \cite[Theorem~1.1]{ZZ4} describes  all compact balanced complex 6 manifold with parallel torsion with respect to the Strominger-Bismut connection. 
Some of these spaces have holomorphically trivial canonical bundle, therefore are CYT (see \cite{OU,ZZ4}), and  the curvature of the Strominger-Bismut connection is an  $SU(3)$--instanton.  Corollary~\ref{comm} combined with  \cite[Theorem~1.1]{ZZ4} and \cite[Theorem~1.16]{ZZ3} describes all possible balanced CYT spaces with  $SU(3)$--instanton Strominger-Bismut connection. 


\vskip.15cm

Next we present those left-invariant balanced CYT solutions with holomorphically trivial canonical bundle and parallel torsion constructed from unimodular Lie groups, with special attention to their role in the Hull-Strominger system and the heterotic equations of motion. We will essentially follow the lines of \cite{FIUV} and \cite{OUV}.

\vskip.2cm

\noindent$\bullet$ {\bf The nilmanifold ${\mathfrak h}_3$}.  
For every $t\in \mathbb{R}-\{0\}$, let us consider the nilpotent Lie algebra ${\mathfrak n}_t$ defined by 
a basis of 1-forms $\{e_1,\dots, e_6\}$ satisfying the equations
\begin{gather}\label{ecus-h3}
de_1=de_2=de_3=de_4=de_5=0,\qquad de_6= - 2 t ( e_1\wedge e_2 - e_3\wedge e_4).
\end{gather}

We consider the metric given by $g=\sum_{i=1}^6e_i^2$, and the $SU(3)$-structure 
defined as 
$$
\omega=e_{12}+e_{34}+e_{56},
\qquad \Theta=(e_{1}+\sqrt{-1}\, e_{2})\wedge (e_{3}+\sqrt{-1}\, e_{4})\wedge (e_{5}+\sqrt{-1}\, e_{6}).
$$
that is, we are considering the $SU(3)$-structure 
$(\omega=-F,\Theta=-\Psi)$ with respect to the one given in~\eqref{AA}. 

It is easy to see that ${\mathfrak n}_t\cong {\mathfrak n}_{t'}$ (i.e. they are isomorphic as Lie algebras), for every $t,t'\in \mathbb{R}-\{0\}$. In particular, all the Lie algebras are isomorphic 
to ${\mathfrak n}_{-\frac12}$, which is precisely the nilpotent Lie algebra labelled as ${\mathfrak h}_3=(0,0,0,0,0,12-34)$ in \cite{FIUV,OUV}. 
It is well known that the connected simply connected and nilpotent
Lie group $H_3$ corresponding to ${\mathfrak h}_3$ has a lattice $\Gamma$ of maximal rank.

On the other hand, we note that the $SU(3)$-structures defined on different ${\mathfrak n}_t$'s are not isomorphic. 
So, the $SU(3)$-structure given above on the family ${\mathfrak n}_t$ can alternatively be thought as 
a 1-parameter family of $SU(3)$-structures on the $6$-dimensional nilmanifold $M=\Gamma\backslash H_3$, induced by the 1-parameter family of left-invariant $SU(3)$-structures on the Lie group $H_3$. 

\vskip.1cm

It is easy to verify using  \eqref{cy1} and \eqref{ecus-h3} that the almost complex structure $J$ is integrable, $N=0$. Furthermore, $d\Theta=0$ and 
$d\omega=2t(e^{125}-e^{345})$, which implies $d \omega^2= 0$, so the canonical bundle is holomorphically trivial and the Hermitian metrics are balanced, $\theta=0$.

The torsion of the Strominger-Bismut connection is $T=-2t(e^{126}-e^{346})$.
A direct calculation shows that the nonzero  terms
of the Strominger-Bismut connection are 
\begin{gather}\label{nablas-h3}
\nabla_{e_6}e_1= -2 t \,e_2, \qquad  \nabla_{e_6}e_2= 2 t \,e_1, \qquad \nabla_{e_6}e_3= 2 t \,e_4, \qquad \nabla_{e_6}e_4= -2 t \,e_3. \nonumber
\end{gather}
Now, it is easy to verify that $\nabla\Theta =0$, 
moreover, by \cite{OUV} the holonomy of the Strominger-Bismut connection is $U(1)\subset SU(3)$. 
It is also easy to check that the torsion tensor $T$ is $\sb$-parallel and therefore the curvature of $\sb$ satisfies \eqref{r4}, i.e. the Strominger-Bismut connection is an  $SU(3)$--instanton. This was already noticed in \cite[Section 5]{FIUV}, where this (non-flat)  $SU(3)$--instanton is used to construct compact solutions to the Hull-Strominger system which, by \cite{I1}, are also solutions to the heterotic equations of motion.

In conclusion, the nilmanifold $M=\Gamma\backslash H_3$, constructed from ${\mathfrak h}_3$, is a compact complex three-fold with holomorphically trivial canonical bundle endowed with a one-parameter family of balanced CYT $SU(3)$-structures such that the Strominger-Bismut connection is an  $SU(3)$--instanton, hence with parallel torsion.
All these solutions are invariant on the nilmanifold, i.e. they come from left-invariant ones on the Lie group $H_3$, and they all provide solutions to the heterotic equations of motion.

\vskip.2cm

\noindent$\bullet$ {\bf The solvmanifold ${\mathfrak g}_7$}. 
For $\delta=\pm 1$ and for every $r,t\in \mathbb{R}-\{0\}$, let us consider the solvable Lie algebra ${\mathfrak s}^{\delta}_{r,t}$ defined by 
a basis of 1-forms $\{e_1,\dots, e_6\}$ satisfying the equations
\begin{gather}\label{ecus-g7}
de_1= -\frac{2}{t}\, e_{2}\wedge e_{5},\quad 
de_2= \frac{2}{t}\, e_{1}\wedge e_{5},\quad 
de_3=  \frac{2}{t}\, e_{4}\wedge e_{5},\\\nonumber
de_4= - \frac{2}{t}\, e_{3}\wedge e_{5},\quad
de_5= 0, \quad
de_6 = -\frac{2 \delta t}{r^2}\,\left( e_{1}\wedge e_{2} - e_{3}\wedge e_{4} \right).
\end{gather}

As for the nilmanifold case, let us consider the metric given by $g=\sum_{i=1}^6e_i^2$, and the $SU(3)$-structure 
defined as 
$$
\omega=e_{12}+e_{34}+e_{56},
\qquad \Theta=(e_{1}+\sqrt{-1}\, e_{2})\wedge (e_{3}+\sqrt{-1}\, e_{4})\wedge (e_{5}+\sqrt{-1}\, e_{6}).
$$ 

It can be checked that all the Lie algebras ${\mathfrak s}^{\delta}_{r,t}$ are isomorphic 
to the solvable Lie algebra labelled as ${\mathfrak g}_7=(24+35,46,56,-26,-36,0)$ in \cite{OUV}. 
We note that the Lie algebra ${\mathfrak g}_7$ admits precisely two complex structures $J_{\pm}$ with non-zero closed $(3,0)$-form, and this fact in encoded in the value of $\delta\in\{-1,1\}$. 
The connected simply-connected solvable Lie group $G_7$ corresponding to ${\mathfrak g}_7$ admits a lattice $\Gamma$ of maximal rank (see for instance \cite{OUV} and references therein).
Since the $SU(3)$-structures defined on different ${\mathfrak s}^{\delta}_{r,t}$ are not isomorphic, similarly to the nilmanifold case above, we have a family of $SU(3)$-structures on the $6$-dimensional solvmanifold $M=\Gamma\backslash G_7$, induced by the family of left-invariant $SU(3)$-structures on the Lie group $G_7$. 

\vskip.1cm

It is easy to verify using  \eqref{cy1} and \eqref{ecus-g7} that the almost complex structures $J_{\pm}$ are integrable. Moreover, $d\Theta=0$ and 
$d\omega=\frac{2 \delta\, t}{r^2} \, (e^{125}-e^{345})$, which implies $d \omega^2= 0$, so the canonical bundle is holomorphically trivial and the Hermitian metrics are balanced.

The torsion 3-form is given by $T=-\frac{2 \delta\, t}{r^2} \, (e^{126}-e^{346})$.
A direct calculation shows that the nonzero  terms
of the Strominger-Bismut connection are 
\begin{gather}\label{nablas-g7}\nonumber
\nabla_{e_5}e_1= \frac{2}{t} \,e_2, \qquad  \nabla_{e_5}e_2= -\frac{2}{t}  \,e_1, \qquad \nabla_{e_5}e_3= -\frac{2}{t}  \,e_4, \qquad \nabla_{e_5}e_4= \frac{2}{t}  \,e_3, \\\nonumber
\nabla_{e_6}e_1= -\frac{2\delta t}{r^2}  \,e_2, \qquad  \nabla_{e_6}e_2= \frac{2\delta t}{r^2} \,e_1, \qquad \nabla_{e_6}e_3= \frac{2\delta t}{r^2} \,e_4, \qquad \nabla_{e_6}e_4= -\frac{2\delta t}{r^2} \,e_3.
\end{gather}

One has $\nabla\Theta =0$, 
furthermore, by \cite[Proposition 6.1]{OUV} the holonomy of the Strominger-Bismut connection reduces to $U(1) \subset SU(3)$.
We also get that the torsion tensor $T$ is $\sb$-parallel and therefore the curvature of $\sb$ satisfies \eqref{r4}, i.e. the Strominger-Bismut connection is an  $SU(3)$--instanton. 
This  $SU(3)$--instanton property was already obtained in \cite[Proposition 5.2]{OUV}, where it is also found that the  $SU(3)$--instanton is non-flat. These properties are used in \cite[Theorem 5.3]{OUV}  to construct compact solutions to the Hull-Strominger system and the heterotic equations of motion on the solvmanifold $M=\Gamma\backslash G_7$.

In conclusion, the solvable Lie algebra ${\mathfrak g}_7$ gives rise to compact complex three-folds with holomorphically trivial canonical bundle endowed with a family of balanced CYT $SU(3)$-structures for which the Strominger-Bismut connection is an  $SU(3)$--instanton, hence with parallel torsion.
Again, all these solutions are invariant on the solvmanifold.

\vskip.2cm

\noindent$\bullet$ {\bf The quotient of the simple complex Lie group ${\rm SL(2},\bC)$}. 
For every $t\in \mathbb{R}-\{0\}$, let us consider the Lie algebra ${\mathfrak g}_t$ defined by 
a basis of 1-forms $\{e_1,\dots, e_6\}$ satisfying the equations
\begin{gather}\label{ecus-sl2C}
de_1 =\frac{1}{t} \left(e_{3}\wedge e_{5} - e_{4}\wedge e_{6}\right),\quad 
de_2=\frac{1}{t} \left(e_{3}\wedge e_{6} + e_{4}\wedge e_{5}\right),\quad 
de_3= -\frac{1}{t} \left(e_{1}\wedge e_{5} - e_{2}\wedge e_{6}\right),\\\nonumber
de_4= -\frac{1}{t} \left(e_{1}\wedge e_{6} + e_{2}\wedge e_{5}\right),\quad
de_5= \frac{1}{t} \left(e_{1}\wedge e_{3} - e_{2}\wedge e_{4}\right), \quad
de_6 = \frac{1}{t} \left(e_{1}\wedge e_{4} + e_{2}\wedge e_{3}\right).
\end{gather}

As in the previous cases, we consider the metric given by $g=\sum_{i=1}^6e_i^2$ and the $SU(3)$-structure 
$$
\omega=e_{12}+e_{34}+e_{56},
\qquad \Theta=(e_{1}+\sqrt{-1}\, e_{2})\wedge (e_{3}+\sqrt{-1}\, e_{4})\wedge (e_{5}+\sqrt{-1}\, e_{6}).
$$ 

It easy to see that all the Lie algebras ${\mathfrak g}_t$ are isomorphic 
to $\mathfrak{so}(3,\!1)$, the real $6$-dimensional Lie algebra underlying the simple complex Lie algebra $\mathfrak{sl}(2,\bC)$. The complex structure on $\mathfrak{so}(3,\!1)$ is the canonical one coming from $\mathfrak{sl}(2,\bC)$. 
It is known that the simple complex Lie group ${\rm SL(2},\bC)$ admits a lattice $\Gamma$ of maximal rank, so we have a one-parameter family of $SU(3)$-structures on the quotient manifold  $M=\Gamma\backslash {\rm SL(2},\bC)$, induced by the 1-parameter family of left-invariant $SU(3)$-structures on the Lie group ${\rm SL(2},\bC)$. Notice that the 1-parameter family of metrics is a scaling of  the standard metric induced by the Killing form (which corresponds to $t=1$). 

\vskip.1cm

Using \eqref{ecus-sl2C} it is easy to check that $d\Theta=0$ and 
$d\omega= -\frac{1}{t}(e^{136} + e^{145} + e^{235} + 3\, e^{246})$, which implies $d \omega^2= 0$. 
So, the canonical bundle is holomorphically trivial and the Hermitian metrics are balanced.

The torsion 3-form is given by $T= -\frac{1}{t}(3\, e^{135} + e^{146} + e^{236} + e^{245})$.
Now, a direct calculation shows that the nonzero  terms
of the Strominger-Bismut connection are 
\begin{gather}\label{nablas-sl2C}\nonumber 
\nabla_{e_1}e_3= -\frac{2}{t} \,e_5, \qquad  \nabla_{e_1}e_4= -\frac{2}{t}  \,e_6, \qquad \nabla_{e_1}e_5= \frac{2}{t}  \,e_3, \qquad \nabla_{e_1}e_6= \frac{2}{t}  \,e_4, \\\nonumber
\nabla_{e_3}e_1= \frac{2}{t} \,e_5, \qquad  \nabla_{e_3}e_2= \frac{2}{t} \,e_6, \qquad \nabla_{e_3}e_5= -\frac{2}{t} \,e_1, \qquad \nabla_{e_3}e_6= -\frac{2}{t} \,e_2,\\\nonumber 
\nabla_{e_5}e_1= -\frac{2}{t} \,e_3, \qquad  \nabla_{e_5}e_2= -\frac{2}{t}  \,e_4, \qquad \nabla_{e_5}e_3= \frac{2}{t} \,e_1, \qquad \nabla_{e_5}e_4= \frac{2}{t}  \,e_2.
\end{gather}

It is easy to verify that  $\nabla\Theta =0$, 
moreover, by \cite[Proposition 6.3]{OUV} the holonomy of the Strominger-Bismut connection reduces to $SO(3)$ inside $SU(3)$. 
One also gets that the torsion tensor $T$ is $\sb$-parallel and therefore the curvature of $\sb$ satisfies \eqref{r4}, i.e. the Strominger-Bismut connection is an  $SU(3)$--instanton. 
This  $SU(3)$--instanton property was already found in \cite[Proposition 4.1]{OUV}, where it is also proved that the  $SU(3)$--instanton is not flat. These properties are
used in \cite[Theorem 4.3]{OUV} to construct compact solutions to the Hull-Strominger system and the heterotic equations of motion on the quotient manifold  $M=\Gamma\backslash {\rm SL(2},\bC)$.

Therefore, the simple complex Lie algebra $\mathfrak{sl}(2,\bC)$ gives rise to compact complex three-folds with holomorphically trivial canonical bundle endowed with a family of balanced CYT $SU(3)$-structures for which the Strominger-Bismut connection is an  $SU(3)$--instanton, hence with parallel torsion.
As in the previous cases, all these solutions are invariant on the quotient manifold.

\begin{rmrk}\label{ou-paper}
Let $M=\Gamma\backslash G$ be a six-dimensional compact manifold defined as the quotient of a connected simply connected Lie group $G$ by a lattice $\Gamma$ of maximal rank.
Suppose that $M$ possesses an invariant balanced Hermitian structure $(J,F)$ with invariant non-zero closed $(3,0)$-form $\Psi$.  
Let $\nabla^{\varepsilon,\rho}_{(J,F)}$ be any metric connection in the $(\varepsilon,\rho)$-plane generated by the Levi-Civita connection and the Gauduchon line of Hermitian connections (i.e. the line passing through the Chern connection and the Strominger-Bismut connection). This plane was introduced and studied in \cite{OUV}, and it also contains the Hull connection. 

By \cite[Theorem 3.5]{OU}, if $\nabla^{\varepsilon,\rho}_{(J,F)}$ is a non-flat  $SU(3)$--instanton, then the Lie algebra of $G$ is isomorphic to ${\mathfrak h}_3$, ${\mathfrak g}_7$, or $\mathfrak{so}(3,\!1)$, i.e. the real Lie algebra underlying $\mathfrak{sl}(2,\bC)$. 

In particular, by Corollary~\ref{comm} and the above result for $(\varepsilon,\rho)=(\frac12,0)$ (which corresponds precisely to the Strominger-Bismut connection), we arrive at the fact that these are the only unimodular Lie algebras admitting complex structures with non-zero closed $(3,0)$-form and balanced Hermitian metrics such that the torsion form is parallel. We note that this agrees with the results obtained by Zhao and Zheng in \cite{ZZ4} for balanced BPT (Bismut torsion parallel) Hermitian Lie algebras.
 \end{rmrk}


\subsection{Non-complex case} 
All (strict) Nearly K\"ahler 6-manifold are examples of ACYT 6 manifolds with an  $SU(3)$--instanton torsion connection since the torsion and the Nijenhuis tensor are $\sb$--parallel due to the result of Kirichenko \cite{Kir} (see also \cite{BM}).

The next example, taken  from \cite{II}, is an example of a balanced ACYT 6-manifold with  $SU(3)$--instanton torsion connection which is neither complex nor a Nearly K\"ahler 6-manifold.

Let $G$ be the six-dimensional connected simply connected and nilpotent
 Lie group, determined by the left-invariant 1-forms $\{e_1,\dots,e_6\}$
 such that
\begin{gather}\label{in1}\nonumber
de_2=de_3=de_6=0,\\\nonumber
de_1=e_3\wedge e_6,\quad
de_4= e_2\wedge e_6, \quad
de_5= e_2\wedge e_3.
\end{gather}
Consider the metric on $G\cong \mathbb R^6$ defined by
$g=\sum_{i=1}^6e_i^2$.
Let $(F,\Psi)$ be the $SU(3)$-structure on $G$ given by \eqref{AA}. Then$(G,F,\Psi)$ is
an almost complex manifold with a $SU(3)$-structure.

It is easy to verify using  \eqref{cy1} and \eqref{in1} that
\begin{gather}\label{in2}
dF=-3e_{236}, \quad  N=-\Psi^-, \quad d\Psi^-=*F, \quad
(N,\Psi^-)=-4, \\ \nonumber \theta=d\Psi^+=(N,\Psi^+)=0.
\end{gather}

Hence, $(G,\Psi,g,J)$ is neither complex nor Nearly K\"ahler manifold but it fulfills
the conditions \eqref{cycon} of Theorem~\ref{cythm1} and
therefore it is an ACYT 6 manifold, i.e. there exists a $SU(3)$ connection with torsion 3-form on $(G,\Psi,g,J)$.
The expression
\eqref{torcy} and \eqref{in2} give
\begin{equation}\label{tor}
T=-2e_{145}+e_{136}+e_{235}-e_{246}, \quad dT=-2(e_{1256}+e_{3456}+e_{1234})=2*F=-2\Phi.
\end{equation}
For the Ricci tensor $Ric$ 
 of the torsion connection  we calculate using \eqref{ricdt}, 
 and the second identity in \eqref{tor} applying \eqref{iden}
\begin{equation}\label{ain}
\begin{split}
Ric_{ij}=\frac1{12}dT_{abci}\Phi_{abcj}=-\frac16\Phi_{abci}\Phi_{abcj}=-2g_{ij} \,.
\end{split}
\end{equation}
Hence, the ACYT torsion connection is Einstein with  constant negative  scalar curvature $s=-12$ and positive Riemannian scalar curvature $s^g=s +\frac14||T||^2=1$ which agrees with \cite[Theorem~4.2]{IS}.

Note that, in general, if a metric connection with torsion 3-form is Einstein its scalar curvature is not necessarily constant \cite{AFer}.

The equalities  \eqref{tor} and  \eqref{cy2} imply  the nonzero  terms
of the torsion connection are (c.f. \cite{II})
\begin{gather}\label{tor1}
\nabla_{e_1}e_6=-e_3, \qquad  \nabla_{e_5}e_2= e_3, \qquad \nabla_{e_4}e_6=-e_2, \\\nonumber
\nabla_{e_4}e_5=-e_1, \qquad \nabla_{e_5}e_1=-e_4, \qquad \nabla_{e_1}e_4=-e_5.
\end{gather}
It is easy to verify (cf. \cite{II}) that 
the torsion tensor $T$ as well as the Nijenhuis tensor $N$ and $dF$ are $\sb$-parallel and therefore the curvature of the torsion connection satisfies \eqref{r4} and  it is an  $SU(3)$--instanton.

The coefficients of the structure equations of the Lie algebra given by \eqref{in1}
are integers. Therefore, the well-known theorem of Malcev \cite{Mal} states that
the group $G$ has a uniform discrete subgroup $\Gamma$ such that $Nil^6=\Gamma\backslash G$ is a compact 6-dimensional nil-manifold. The $SU(3)$-structure, described above, descends to $Nil^6$ and therefore we obtain a compact example of a balanced non complex ACYT $\sb$--Einstein  space with  $SU(3)$--instanton curvature.

\section{Appendix}\label{cins}
\subsection{Chern instanton}
It is known that on a compact  non-K\"ahler CYT manifold with an exact  Lee form $\theta=df$ for a smooth function $f$, if  the curvature of the  Chern connection is  an $SU(n)$-instanton then $f$ must be  a constant.

We include here a proof of this fact for completeness  based on \cite{IP2}. We have
\begin{prop}\label{chern}
On a 2n-dimensional compact CYT manifold if the curvature of the Chern connection is an $SU(n)$-instanton then it is balanced.
\end{prop}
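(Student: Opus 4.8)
The plan is to show that the Lee form $\theta$ is harmonic, hence (on a compact manifold) the assumed exactness $\theta=df$ forces $\theta=0$, i.e. the metric is balanced. The strategy is to exploit the relation between the curvature of the Chern connection and that of the Strominger--Bismut (torsion) connection together with the trace identities, following \cite{IP2}. First I would recall that on a complex manifold the Chern connection $\nabla^c$ and the torsion connection $\nabla$ are related by a well-understood shift, and that the Chern Ricci form $\rho^c$ and the Strominger--Bismut Ricci form $\rho$ differ by an explicit term built from $\theta$ and $dF$; since the manifold is CYT we have $\rho=0$, so $\rho^c$ is expressed purely in terms of $d\theta$ (up to the appropriate constant). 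The $SU(n)$--instanton condition on the Chern connection says precisely that its curvature is of type $(1,1)$ with vanishing trace, i.e. $\rho^c=0$ and the full curvature lies in $\mathfrak{su}(n)$; in particular $\rho^c=0$ gives a first-order relation on $\theta$.

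Next I would turn that relation into a statement that $d\theta$ (or its $(1,1)$-part) vanishes, or more precisely that $\delta\theta=0$. The key computation is to take the $SU(n)$--instanton condition, contract the Bianchi-type identities (the analogues of \eqref{1bi}, \eqref{1bi1}, \eqref{gen}) for the Chern curvature against $F$, and extract the codifferential $\delta\theta$ in terms of traces of the curvature, all of which vanish by the instanton hypothesis. This is the step where one reproduces the argument of \cite{IP2}: the vanishing of both the $(1,1)$-trace and the $(3,0)+(0,3)$-type components of the Chern curvature kills all the curvature terms, leaving a linear second-order expression in $\theta$ that integrates to $\|\theta\|^2$ or $\|\nabla\theta\|^2$ after pairing with $\theta$ and integrating over the compact manifold.

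Concretely, I expect the endgame to run as follows: derive an identity of the schematic form $\delta\theta = (\text{traces of Chern curvature}) = 0$, then use that $\theta=df$ implies $\Delta f = \delta df = \delta\theta = 0$, so $f$ is harmonic on a compact manifold and therefore constant; hence $\theta=df=0$ and the metric is balanced. The main obstacle is the bookkeeping in the second step --- correctly relating the Chern curvature traces to the Strominger--Bismut quantities and to $d\theta$, and making sure the instanton hypothesis (which is a condition on the \emph{full} curvature two-form lying in $\mathfrak{su}(n)$, not merely on the Ricci form) is used to kill the right terms. One has to be careful that the non-K\"ahler correction terms involving $dF$ and $|T|^2$ are handled correctly, since in the non-balanced case these do not vanish a priori; the compactness is then invoked precisely to convert the pointwise identity into the conclusion $\theta=0$ via integration or the maximum principle, exactly as in the proof of Theorem~\ref{gaud} above.
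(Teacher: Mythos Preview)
Your overall strategy --- extract a divergence-type identity for $\theta$ from the instanton condition and integrate --- is correct in spirit, but two points need correcting.

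First, the proposition as stated does \emph{not} assume $\theta$ is exact; that hypothesis appears only in the motivating sentence preceding the statement. Your plan leans on exactness to pass from $\delta\theta=0$ to $\theta=0$, so as written it would prove a weaker result than what is claimed.

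Second, and more substantively, the pointwise identity you expect is not the one that actually comes out. The paper's argument is a three-line application of a single formula from \cite[Proposition~3.3]{IP2}:
\[
\tfrac12\, K(e_i,Je_i,JX,Y)=\rho^{1,1}(JX,Y)+C(X,e_i,e_j)\,C(Y,e_i,e_j)-\tfrac14\, dT(JX,Y,e_i,Je_i).
\]
On a CYT space $\rho=0$, and the instanton hypothesis kills the left-hand side. Tracing over $X=Y$ and invoking the identities $\|C\|^2=\tfrac13\|T\|^2$ and $dT(e_j,Je_j,e_i,Je_i)=8\|\theta\|^2+8\delta\theta-\tfrac43\|T\|^2$ from \cite{IP2}, the $\|T\|^2$ contributions cancel and one is left with
\[
0=2\|\theta\|^2+2\delta\theta.
\]
The ``non-K\"ahler correction terms involving $|T|^2$'' that you flag as a bookkeeping worry do not vanish: what survives is precisely the $\|\theta\|^2$ term, and it is the whole point. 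Integrating over the compact manifold gives $\int_M\|\theta\|^2=0$, hence $\theta=0$ directly --- no exactness, no harmonicity of $f$, no maximum principle. Your proposed route through Ricci-form comparisons and Bianchi identities could in principle reach the same pointwise identity, but you should not expect the curvature traces alone to equal $\delta\theta$; the torsion-norm term is essential and is what makes the integration argument work without any assumption on $\theta$.
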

\begin{proof}
Let on a compact CYT space $K$ be the curvature of the Chern connection and $C=\frac12(T(JX,JY,Z)-T(X,Y,Z))$ be its torsion. Then \cite[Proposition~3.3]{IP2} gives the formula 
\[\frac12K(e_i,Je_i,JX,Y)=\rho^{1,1}(JX,Y)+C(X, e_i,e_j)C(Y,e_i,e_j) -\frac14dT(JX,Y,e_i,Je_i) ,\]
where $\rho^{1,1}$ is the (1,1) part of $\rho$. Note that $\rho=0$ on a CYT manifold.

If $K$ is an $SU(n)$-instanton then we get taking the trace in the above formula 
\[ 0=||C||^2 
-\frac14dT(Je_j,e_j,e_i,Je_i)
= \frac13||T||^2 
 -\frac13||T||^2+2||\theta||^2+2\delta\theta 
= 2||\theta||^2+2\delta\theta,
\]
where we used the identities $||C||^2=\frac13||T||^2$ and 
 $dT(e_j,Je_j,e_i,Je_i)=8||\theta||^2+8\delta\theta-\frac43||T||^2$
(see \cite[(3.24)]{IP2}.  An integration over the compact space yields $\theta=0$.
\end{proof}


\end{document}